\numberwithin{equation}{section}
\newtheorem{thm}{Theorem}[section]
\newtheorem{lem}[thm]{Lemma}
\newtheorem{cor}[thm]{Corollary}
\newcommand{\N}{\mathbb{N}}
\newcommand{\R}{\mathbb{R}}
\title[1-dimensional symmetric stable operator]{On elliptic equations with N-independent stable operators}
\author[L. Du]{Lele Du}
\author[M. Yang]{Minbo Yang}
\address{Lele Du \newline\indent Dipartimento di Matematica Guido Castelnuovo,\newline\indent Sapienza Universit\`{a} di Roma, Piazzale Aldo Moro 5, 00185 Rome, Italy.}
\address{Riemann International School of Mathematics,\newline\indent Villa Toeplitz via G.B. Vico 46, 21100, Varese, Italy.}
\email{ lele.du@uniroma1.it}	
\address{Minbo Yang \newline\indent School of Mathematical Sciences, Zhejiang Normal University, \newline\indent Jinhua, Zhejiang, 321004, People's Republic of China}
\email{ mbyang@zjnu.edu.cn}
\subjclass[2020]{35A01, 35B06, 35J61}
\keywords{L\'{e}vy processes; Stable operators; Liouville theorems; Symmetry.}
\thanks{$^\ddag$Minbo Yang is the corresponding author. }
\begin{document}

\begin{abstract}
We investigate the existence and nonexistence of positive solutions of the semilinear elliptic equation
\begin{align*}
\sum^{N}_{i=1}\left(-\partial_{ii}\right)^{s}u=u^{p}
\end{align*}
with N-independent 1-dimensional symmetric $2s$-stable operators. Firstly, in the whole space $\R^{N}$, we establish the nonexistence of positive supersolutions for $1<p\leq\frac{N}{N-2s}$. Furthermore, the symmetry of positive solutions is obtained when $p>\frac{N}{N-2s}$. It is crucial for these solutions to exhibit suitable decay at infinity to compensate for the absence of the Kelvin transform. Notably, while these solutions are symmetric, they are not radially symmetric due to the non-rotational invariance of the operator involved. Next, in the half space $\R_{+}^{N}$, we observe the nonexistence of positive supersolutions in the region $1<p\leq\frac{N+s}{N-s}$. Additionally, we find that positive solutions with appropriate decay for the Dirichlet boundary problem do not exist. Finally, we present the symmetry of positive solutions in unit ball $B_{1}$.
\end{abstract}

\maketitle 

%\tableofcontents
\allowdisplaybreaks

\begin{center}
\begin{minipage}{14cm}
\small
\tableofcontents
\end{minipage}
\end{center}

\newpage

\section{Introduction}
We are interested in the Liouville type results of the positive solutions of the semilinear elliptic equation
\begin{align*}
\sum^{N}_{i=1}\left(-\partial_{ii}\right)^{s}u=u^{p},
\end{align*}
where $N\geq2$, $0<s<1$ and $p>1$. The notation $\left(-\partial_{ii}\right)^{s}$, which represents the fractional second-order directional derivative, serves as an extension of the definition of $-\partial_{ii}$ within the framework of fractional operators. However, the actions 
\begin{align*}
\sum^{N}_{i=1}\left(-\partial_{ii}\right)^{s}\quad\text{on }-\partial_{ii}
\end{align*} 
that initial nonlocalization followed by summarization make it greatly different from the well-known fractional Laplacian operator $\left(-\Delta\right)^{s}$. We will subsequently provide a precise definition of the operator $\left(-\partial_{ii}\right)^{s}$ and we will see that $-\Delta$ is the limiting operator for both $\left(-\Delta\right)^{s}$ and $\sum^{N}_{i=1}\left(-\partial_{ii}\right)^{s}$ as $s\rightarrow1^{-}$. 

To begin the study of the model, let us recall the long-time standing Lane--Emden equation
\begin{align}\label{E1-1}
-\Delta u=u^{p}\quad\text{in }\R^{N}.
\end{align}
We emphasize that the classification of positive solutions of \eqref{E1-1} depends on the selection of the exponent $p$. The thresholds for all $p>1$, which devide $p$ into distinct regimes, are categorized by the so-called 
\begin{align*}
\text{Serrin exponent}:\left\lbrace 
\begin{aligned}
&\frac{N}{N-2},&&N>2,\\
&+\infty,&&N=2.
\end{aligned}
\right.
\end{align*}

\begin{align*}
\text{Sobolev exponent}:\left\lbrace 
\begin{aligned}
&\frac{N+2}{N-2},&&N>2,\\
&+\infty,&&N=2.
\end{aligned}
\right.
\end{align*}

\begin{align*}
\text{Joseph-Lundgren exponent}:\left\lbrace 
\begin{aligned}
&1+\frac{4}{N^{2}-12N+20},&&N>10,\\
&+\infty,&&N\leq10
\end{aligned}
\right.
\end{align*}
and so on due to the flavor of references. The optimality of $1<p\leq\frac{N}{N-2}$ for the nonexistence of positive supersolutions was established by Gidas \cite{G}. Gidas--Spruck \cite{GS1,GS2} expanded this range of nonexistence for positive solutions to $1<p<\frac{N+2}{N-2}$. The Liouville result in subcritical case is considered optimal, as the uniqueness of solutions in the form established by Aubin \cite{A} and Talenti \cite{T} was demonstrated by Gidas--Ni--Nirenberg \cite{GNN2}, Caffarelli--Gidas--Spruck \cite{CGS}, Chen--Li \cite{CL} etc. in the critical case $p=\frac{N+2}{N-2}$. For the supercritical case $p>\frac{N+2}{N-2}$, Gui--Ni--Wang \cite{GNW} and Wang \cite{W} proved that \eqref{E1-1} possesses an unique positive radial solution under prescribed initial conditions. It is noteworthy that all of these radial solutions are unstable (particularly with infinite Morse index) if and only if $\frac{N+2}{N-2}<p<1+\frac{4}{N^{2}-12N+20}$, in which region Farina \cite{F} gave a sharp nonexistence result for any solution with a finite Morse index.

Motivated by the seminal works referenced for \eqref{E1-1}, there have been numerous extensions that involve a variety of quasilinear operators \cite{AS,BCN,GMS}, nonlocal operators \cite{CLL,DDW} and degenerate elliptic operators \cite {BC3,BGL}. These studies address the existence and nonexistence of solutions within the whole space as well as half spaces, exterior domains or bounded domains, see also \cite{BCC,BM,CL1,L} and reference therein. For further insights, We recommend the introduction of \cite{BDG} as a concise survey for interested readers.

Let us state our operator in studying the physical phenomenon, biological models and mathematical finance, particularly in relation to the integro-differential equation
\begin{align}\label{E1-2}
-Lu=u^{p}
\end{align}
which arose as a model for real-world phenomenon that involves L\'{e}vy processes. The operator $L$ is defined by
\begin{align*}
Lu\left(x\right)
=\frac{C_{s}}{2}\int_{\mathbb{S}^{N-1}}\int_{\R}\frac{u\left(x+t\theta\right)+u\left(x-t\theta\right)-2u\left(x\right)}{\left|t\right|^{1+2s}}dtd\mu, 
\end{align*}
where 
\begin{align*}
C_{s}=\left(\int_{\R}\frac{1-\cos x}{\left|x\right|^{1+2s}}dx\right)^{-1}
\end{align*}
and the spectral measure $\mu=\mu\left(\theta\right)$ defined on $\mathbb{S}^{N-1}$ satisfies the nondegenerate ellipticity conditions
\begin{align*}
0<\lambda\leq\inf_{\nu\in\mathbb{S}^{N-1}}\int_{\mathbb{S}^{N-1}}\left|\nu\cdot\theta\right|^{2s}d\mu\quad\text{and}\quad\int_{\mathbb{S}^{N-1}}d\mu\leq\Lambda<\infty.
\end{align*}
The operator $L$ is known as a class of the infinitesimal generators of the symmetric stable L\'{e}vy processes $X_{t}$. A simplest case of the spectral measure $\mu$ is $d\mu=Cd\theta$, in which case $L$ turns out to be $-L=C\left(-\Delta\right)^{s}$, where 
\begin{align*}
\left(-\Delta\right)^{s}u\left(x\right)
=C\left(N,s\right)\int_{\R^{N}}\frac{u\left(x+y\right)+u\left(x-y\right)-2u\left(x\right)}{\left|y\right|^{N+2s}}dy.
\end{align*}
There exists a lot of literature dedicated to the operator \((- \Delta)^{s}\). Notable works include the extension definition of fractional Laplacian by Caffarelli--Silvestre \cite{CS}, the guidance books of Bucur--Valdinoci \cite{BV} and Di Nezza--Palatucci--Valdinoci \cite{NPV}, among others referenced therein. As a specific instance of \eqref{E1-2}, equation \eqref{E1-1} is generalized to be
\begin{align}\label{E1-3}
\left(-\Delta\right)^{s}u=u^{p}\quad\text{in }\R^{N}.
\end{align}
We refer to Felmer--Quaas \cite{FQ} for the nonexistence of positive supersolutions of \eqref{E1-3} in the optimal range $1<p\leq\frac{N}{N-2s}$. Chen--Li--Ou\cite{CLO} and Chen--Li--Li \cite{CLL} have developed the moving plane method to classify all the positive solutions of \eqref{E1-3} when $1<p\leq\frac{N+2s}{N-2s}$. 

Recently, a more general absolutely continuous measure $\mu$ with $d\mu=a\left(\theta\right)d\theta$ was considered by Birindelli--Du--Galise \cite{BDG}, where $a\left(\theta\right)\in L^{\infty}\left(\mathbb{S}^{N-1}\right)$ is nonnegative and even function such that $L$ can be rewritten by
\begin{align*}
L_{a}u\left(x\right)
=\int_{\R^{N}}\frac{u\left(x+y\right)+u\left(x-y\right)-2u\left(x\right)}{\left|y\right|^{N+2s}}a\left(\frac{y}{\left|y\right|}\right)dy.
\end{align*}
Under the assumption that $a\left(\theta\right)$ is bounded above and not identically zero, they established the nonexistence of positive classical solutions of
\begin{align}\label{E1-4}
-L_{a}u\geq u^{p}
\end{align}
in $\R^{N}$ with the subcritical region $1<p\leq\frac{N}{N-2s}$ as well as in $\R_{+}^{N}$ along with the optimal exponent $1<p\leq\frac{N+s}{N-s}$.

In the present paper, we study another important non-absolutely continuous measure $\mu$: the L\'{e}vy processes $X_{t}$ is the vector of N-independent 1-dimensional symmetric stable processes 
\begin{align*}
X_{t}=\left(X^{1}_{t},...,X^{N}_{t}\right),
\end{align*}
in which case the spectral measure $\mu$ is the summation of Dirac measures
\begin{align*}
\mu=\sum^{N}_{i=1}\left(\delta_{e_{i}}+\delta_{-e_{i}}\right)
\end{align*}
on the unit orthogonal basis $\left\{e_{i}\right\}_{N}$. Correspondingly the infinitesimal generators $L$ of $X_{t}$ coincides with
\begin{align*}
\mathcal{I}u=\sum^{N}_{i=1}\mathcal{I}_{i}u,
\end{align*}
where
\begin{align*}
\mathcal{I}_{i}u\left(x\right)
:=\frac{C_{s}}{2}\int_{\R}\frac{u\left(x+te_{i}\right)+u\left(x-te_{i}\right)-2u\left(x\right)}{\left|t\right|^{1+2s}}dt.
\end{align*}
In this way, like the fractional Laplacian $\left(-\Delta\right)^{s}$, the operator $-\mathcal{I}$ can also be classified as a fractional Laplacian, originating from a different approach to nonlocalized extensions. Let us denote by
\begin{align*}
\left(-\partial_{ii}\right)^{s}u:=-\mathcal{I}_{i}u,
\end{align*}
it follows that for any $u\in C^{2}_{0}\left(\R^N\right)$, we have
\begin{align*}
\lim_{s\rightarrow1^{-}}\sum^{N}_{i=1}\left(-\partial_{ii}\right)^{s}u=\sum^{N}_{i=1}\lim_{s\rightarrow1^{-}}-\mathcal{I}_{i}u=\sum^{N}_{i=1}-\partial_{ii}u=-\Delta u.
\end{align*}
Consequently, the operator $-\Delta$ is one of the limiting forms of $-L$ as $s\rightarrow1^{-}$, not only in the context of $\left(-\Delta\right)^{s}$ but also in relation to $-\mathcal{I}$. 

One of the distinctions between $\left(-\Delta\right)^{s}$ and $-\mathcal{I}$ can be explained by the L\'{e}vy processes
with different jumps. The operator $\left(-\Delta\right)^{s}$ is $N$-dimensional spherically symmetric $2s$-stable operator whose L\'{e}vy processes jump in random directions over the unit sphere. This behavior ensures that the diffusion process encompasses the entirety of $N$-dimensional Lebesgue measure in the whole space. In contrast, the operator $-\mathcal{I}$ is composed of N-independent 1-dimensional symmetric $2s$-stable operators. Each process $X^{i}_{t}$ is restricted to jumping along the positive or negative coordinate axes randomly, resulting in a Lévy measure that is concentrated along the coordinate axes.

The regularity of solutions to the problem $-\mathcal{I}u=f$ has been established by Bass--Chen \cite{BC1,BC2}, Kassmann--Schwab \cite{KS}, Ros-Oton--Serra \cite{RS1,RS2}, Fern\'{a}ndez-Real--Ros-Oton \cite{FR} and reference therein. 

\subsection{Problem in the whole space}
We first study solutions of the following inequality
\begin{align}\label{E1-5}
-\mathcal{I}u\geq u^{p}\quad\text{in }\R^{N}.
\end{align}
The operator $\mathcal{I}$ is well-defined by assuming the usual condition that $u\in C^{2}\left(\R^{N}\right)\cap\mathcal{L}_{s}$, where
\begin{align*}
\mathcal{L}_{s}=\left\{u\in L^{1}_{loc}\left(\R^{N}\right):\limsup_{\left|x\right|\rightarrow+\infty}\frac{\left|u\right|}{\left|x\right|^{2s-\delta_{1}}}<+\infty\text{ for some }\delta_{1}\in(0,2s]\right\}.
\end{align*}
We say $u$ is a distributional solution of \eqref{E1-5} if for any nonnegative $\varphi\in C^{\infty}_{0}\left(\R^{N}\right)$, we have
\begin{align}\label{E1-6}
\int_{\R^{N}}u\left(-\mathcal{I}\varphi\right)dx\geq\int_{\R^{N}}u^{p}\varphi dx.
\end{align}
However, the classical solution $u\in C^{2}\left(\R^{N}\right)\cap\mathcal{L}_{s}$ cannot be directly extended to be the distributional solution in $\mathcal{L}_{s}$. The integration by parts formula for functions in $C^{2}\left(\R^{N}\right)\cap\mathcal{L}_{s}$ is satisfied for a class of operators with the kernel $\left|\cdot\right|^{-N-2s}$, whereas, it is not immediately obvious that such integration by parts can be performed for the operator $\mathcal{I}$, which is associated with the kernel $\left|\cdot\right|^{-1-2s}$, when considering two smooth functions in $\mathcal{L}_{s}$.

The left integral of \eqref{E1-6} may not be well-posed unless higher regularity conditions 
\begin{align*}
\int_{\R^{N}}\frac{u\left(x\right)}{1+\left|x\right|^{1+2s}}dx<+\infty
\end{align*}
are imposed. This would request that we substitute $\mathcal{L}_{s}$ with its subspace
\begin{align*}
\overline{\mathcal{L}_{s}}=\left\{u\in L^{1}_{loc}\left(\R^{N}\right):\limsup_{\left|x\right|\rightarrow+\infty}\frac{\left|u\right|}{\left|x\right|^{2s-N+1-\delta_{2}}}<+\infty\text{ for some }\delta_{2}>0\right\}.
\end{align*}
Now that the solution $u\in C^{2}\left(\R^{N}\right)\cap\overline{\mathcal{L}_{s}}$ of \eqref{E1-5} is also a distributional solution (See Corollary \ref{C2-9}) and the nonexistence of positive solutions can be established. 

\begin{thm}\label{T1-1}
Let $0<s<1$ and $1<p\leq\frac{N}{N-2s}$. Assume $u\in C^{2}\left(\R^{N}\right)\cap\overline{\mathcal{L}_{s}}$ is a nonnegative solution of \eqref{E1-5}, then $u\equiv0$.
\end{thm}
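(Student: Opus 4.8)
The plan is to follow the classical Gidas-type strategy for the nonexistence of positive supersolutions, adapted to the nonlocal operator $\mathcal{I}$ and its associated kernel $|\cdot|^{-1-2s}$ concentrated on the coordinate axes. Since by Corollary \ref{C2-9} a classical solution $u\in C^{2}(\R^{N})\cap\overline{\mathcal{L}_{s}}$ of \eqref{E1-5} is also a distributional solution, I may freely test \eqref{E1-6} against nonnegative $\varphi\in C^{\infty}_{0}(\R^{N})$. The key is to construct a suitable family of test functions. First I would fix a cutoff $\phi\in C^{\infty}_{0}(\R^{N})$ with $0\le\phi\le1$, $\phi\equiv1$ on $B_{1}$, $\mathrm{supp}\,\phi\subset B_{2}$, and for $R>0$ set $\varphi_{R}(x)=\phi(x/R)^{m}$ for a large exponent $m$ (to be chosen so that the Hölder step below closes). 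The scaling properties of $\mathcal{I}$ give $\mathcal{I}\varphi_{R}(x)=R^{-2s}(\mathcal{I}\phi^{m})(x/R)$, and the directional structure of $\mathcal{I}$ together with the smoothness of $\phi^{m}$ ensures the pointwise bound $|\mathcal{I}\varphi_{R}(x)|\le C R^{-2s}\varphi_{R}(x)^{1/p}\mathbf{1}_{B_{2R}}(x)$ — this last inequality is the crucial technical point and is exactly where the $|\cdot|^{-1-2s}$ kernel, rather than $|\cdot|^{-N-2s}$, must be handled carefully.

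With such a test function, inserting $\varphi_{R}$ into \eqref{E1-6} yields
\begin{align*}
\int_{\R^{N}}u^{p}\varphi_{R}\,dx\le\int_{\R^{N}}u\,(-\mathcal{I}\varphi_{R})\,dx\le C R^{-2s}\int_{B_{2R}}u\,\varphi_{R}^{1/p}\,dx.
\end{align*}
Applying Hölder's inequality with exponents $p$ and $p'=p/(p-1)$ to the right-hand side gives
\begin{align*}
\int_{\R^{N}}u^{p}\varphi_{R}\,dx\le C R^{-2s}\Bigl(\int_{B_{2R}}u^{p}\varphi_{R}\,dx\Bigr)^{1/p}\,R^{N/p'},
\end{align*}
hence $\bigl(\int_{B_{R}}u^{p}\,dx\bigr)^{1/p'}\le C R^{-2s+N/p'}=CR^{\,(N-2sp')/p'}$, i.e. $\int_{B_{R}}u^{p}\,dx\le C R^{\,N-2sp'}$. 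In the strictly subcritical case $1<p<\frac{N}{N-2s}$ one has $N-2sp'<0$, so letting $R\to\infty$ forces $\int_{\R^{N}}u^{p}\,dx=0$ and thus $u\equiv0$. In the borderline case $p=\frac{N}{N-2s}$ the exponent vanishes, giving only $\int_{\R^{N}}u^{p}\,dx<\infty$; to conclude I would then run the estimate once more but now discard on the right-hand side the already-known-finite tail, i.e. replace $\int_{B_{2R}}$ by $\int_{B_{2R}\setminus B_{R}}$ (using that $\mathcal{I}\varphi_{R}$ is supported there after adjusting the cutoff to be $1$ on a larger ball), obtaining $\int u^{p}\varphi_{R}\le C(\int_{B_{2R}\setminus B_{R}}u^{p})^{1/p}\to0$, which again yields $u\equiv0$.

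The main obstacle I anticipate is the pointwise estimate $|\mathcal{I}\varphi_{R}(x)|\le C R^{-2s}\varphi_{R}(x)^{1/p}\mathbf{1}_{B_{2R}}$, which must be established uniformly. Because $\mathcal{I}=\sum_{i}\mathcal{I}_{i}$ integrates only along the lines $x+te_{i}$, one has to control $\mathcal{I}_{i}\varphi_{R}(x)$ at points $x$ where $\varphi_{R}(x)$ is small or zero but some line through $x$ meets the support of $\varphi_{R}$; here the decay $|t|^{-1-2s}$ of the kernel and the fact that $\varphi_{R}=\phi_{R}^{m}$ vanishes to high order where $\phi_{R}$ does are what save the bound, and choosing $m$ large (depending on $p$ and $s$) makes the factor $\varphi_{R}^{1/p}$ appear. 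A secondary subtlety is justifying convergence of all integrals and the passage $R\to\infty$ using only $u\in\overline{\mathcal{L}_{s}}$, i.e. the growth restriction $|u|\lesssim|x|^{2s-N+1-\delta_{2}}$; one should check this growth is compatible with the bounds above so that no boundary terms at infinity are lost, which is precisely the reason $\overline{\mathcal{L}_{s}}$ rather than $\mathcal{L}_{s}$ is the natural class in which to argue.
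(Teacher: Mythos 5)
Your overall skeleton is the same as the paper's: reduce to the distributional inequality \eqref{E1-6} via Corollary \ref{C2-9}, test with a rescaled cutoff, use the scaling $\mathcal{I}\varphi_R(x)=R^{-2s}\mathcal{I}\varphi(x/R)$ and H\"older to reach $\int u^p\varphi_R\le CR^{N-\frac{2sp}{p-1}}$, then let $R\to\infty$. However, two of your steps are not correct as written. First, the pointwise bound $|\mathcal{I}\varphi_R(x)|\le CR^{-2s}\varphi_R(x)^{1/p}\mathbf{1}_{B_{2R}}(x)$ is false: $\mathcal{I}$ integrates along entire coordinate lines, so $\mathcal{I}\varphi_R(x)\neq 0$ at many points far outside $B_{2R}$ (any $x$ whose coordinate line meets the support). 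Only the signed version can hold, and it suffices because $-\mathcal{I}\varphi_R\le 0$ outside $B_{2R}$ and $u\ge 0$; but you neither make that distinction nor prove the bound on $B_{2R}$, which you yourself identify as the crux. It is provable (split the $t$-integral at $|t|=\rho$ with $\rho\sim\phi(x/R)$, use $|D^2(\phi^m)|\le C\phi^{m-2}$ and Lipschitz continuity of $\phi$, and take $m\ge\frac{2sp}{p-1}$ so that $\phi^{m-2s}\le\phi^{m/p}$), but the paper avoids this entirely: it proves the weaker estimate $-\mathcal{I}\varphi\le M\varphi$ on all of $\R^N$ by a compactness/contradiction argument on $\overline{B_2}$ (using that at a boundary point some coordinate line still meets $\{\varphi>0\}$, so $\mathcal{I}\varphi>0$ there), and then applies H\"older in the form $\int u\varphi_R\le\bigl(\int_{B_{2R}}u^p\varphi_R\bigr)^{1/p}\bigl(\int_{B_{2R}}\varphi_R\bigr)^{(p-1)/p}$, so no power gain in the test function is needed.

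The genuinely broken step is your treatment of the borderline case $p=\frac{N}{N-2s}$. You propose to discard the bulk by asserting that $\mathcal{I}\varphi_R$ is supported in the annulus $B_{2R}\setminus B_R$ once the cutoff is $\equiv 1$ on $B_R$. That is the standard trick for the local Laplacian, but it fails for the nonlocal operator: for $x$ well inside $B_R$ one has $-\mathcal{I}\varphi_R(x)=R^{-2s}\bigl(-\mathcal{I}\varphi\bigr)(x/R)\ge cR^{-2s}>0$, since the kernel sees the far region where $\varphi_R$ decays, no matter how the cutoff is adjusted. So the annulus reduction you invoke is unavailable and your critical-case argument does not close. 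The paper's fix is different: the first pass yields $\int_{\R^N}u^p\,dx<+\infty$, and one then splits $\int u\varphi_R$ over $\{|x|\le\sqrt R\}$ and $\{\sqrt R\le|x|\le 2R\}$; H\"older gives factors $R^{s}$ and $R^{2s}$ respectively, so after multiplying by $R^{-2s}$ the inner term is $O(R^{-s})$ and the outer term carries $\bigl(\int_{\sqrt R\le|x|\le 2R}u^p\,dx\bigr)^{1/p}\to 0$ because the full integral is finite. You need this (or an equivalent) splitting, rather than a support claim on $\mathcal{I}\varphi_R$, to conclude $u\equiv 0$ at the Serrin exponent. Your final remark that $\overline{\mathcal{L}_{s}}$ is what makes the testing legitimate is correct and matches the paper's use of Corollary \ref{C2-9}.
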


To prove the main results in this theorem, we will adopt the idea of Birindelli--Du--Galise \cite{BDG}, where they proved the nonexistence results for the positive classical solution of \eqref{E1-4} in the whole space and the half space. Motivated by the works in \cite{BCN,BDG}, we aim to establish a fundamental inequality involving the operator $\mathcal{I}$ and the compactly supported test functions in $\R^{N}$. Then, by using a scaled test-function argument, we are able to derive a Liouville result on and below the Serrin exponent $\frac{N}{N-2s}$, consistent with findings related to both local and nonlocal Laplacian operators. 

The validity of the proof for Theorem \ref{T1-1} is based on the applicability of the integration by parts formula, suggesting that the assumption $\overline{\mathcal{L}_{s}}$ is essential. Nevertheless, we relax the assumption from $\overline{\mathcal{L}_{s}}$ to $\mathcal{L}_{s}$, resulting in a more robust version of Theorem \ref{T1-1}. We address the aforementioned challenge by presenting an alternative proof.
 
\begin{thm}\label{T1-2}
Let $0<s<1$ and $1<p\leq\frac{N}{N-2s}$. Assume $u\in C^{2}\left(\R^{N}\right)\cap\mathcal{L}_{s}$ is a nonnegative solution of \eqref{E1-5}, then $u\equiv0$.
\end{thm}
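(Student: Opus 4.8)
The plan is to dispense with the integration-by-parts identity of Corollary~\ref{C2-9} — which is only available in $\overline{\mathcal{L}_{s}}$ — and instead to test the pointwise inequality $u^{p}\le-\mathcal Iu$ against a rescaled cut-off \emph{while keeping $\mathcal I$ acting on $u$}. Suppose $u\not\equiv0$; a harmless first step is the strong maximum principle (if $u(x_{0})=0$ then, since $u\ge0$, $-\mathcal Iu(x_{0})\le0$, which together with $-\mathcal Iu(x_{0})\ge u(x_{0})^{p}=0$ forces $u\equiv0$ along every coordinate line through $x_{0}$, hence on all of $\R^{N}$), so that $u>0$ everywhere. Fix $\psi\in C^{\infty}_{0}(B_{2})$ with $0\le\psi\le1$ and $\psi\equiv1$ on $B_{1}$, put $\varphi_{R}(x)=\psi(x/R)^{m}$ with $m$ a large integer to be fixed, and observe that $\int_{\R^{N}}(-\mathcal Iu)\varphi_{R}\,dx$ is an \emph{absolutely} convergent double integral precisely because $u\in\mathcal{L}_{s}$: the tails of the inner integrals involve $u(x\pm te_{i})\left|t\right|^{-1-2s}$, and $\left|u(y)\right|\le C\left|y\right|^{2s-\delta_{1}}$ makes them integrable. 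Integrating the inequality against $\varphi_{R}$ gives
\begin{align*}
\int_{\R^{N}}u^{p}\varphi_{R}\,dx\le\int_{\R^{N}}(-\mathcal Iu)\varphi_{R}\,dx=\frac{C_{s}}{2}\sum_{i=1}^{N}\iint_{\R^{N}\times\R}\frac{2u(x)-u(x+te_{i})-u(x-te_{i})}{\left|t\right|^{1+2s}}\,\varphi_{R}(x)\,dt\,dx .
\end{align*}

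I would then split each inner integral at $\left|t\right|=\eta R$, with $\eta\in(0,1)$ small. On $\left|t\right|\ge\eta R$ the two subtracted terms are $\le0$ and can be dropped, so that this part is at most $\tfrac{C_{s}}{2}\sum_{i}\iint_{\left|t\right|\ge\eta R}\frac{2u(x)\varphi_{R}(x)}{\left|t\right|^{1+2s}}\,dt\,dx=\frac{C_{s}N}{s(\eta R)^{2s}}\int_{\R^{N}}u\,\varphi_{R}\,dx$. On $\left|t\right|\le\eta R$ nothing may be dropped, but the numerator is $O(t^{2})$; applying Fubini (the $x$-integral over the compact set $\operatorname{supp}\varphi_{R}$ converges since $u\in C^{2}$) and the translation $x\mapsto x\mp te_{i}$,
\begin{align*}
\int_{\R^{N}}\big(2u(x)-u(x+te_{i})-u(x-te_{i})\big)\varphi_{R}(x)\,dx=\int_{\R^{N}}u(x)\big(2\varphi_{R}(x)-\varphi_{R}(x+te_{i})-\varphi_{R}(x-te_{i})\big)\,dx ,
\end{align*}
and the second difference of $\varphi_{R}$ is bounded by $C\,t^{2}R^{-2}\varphi_{R}(x)^{1-2/m}$ and supported in an annulus $B_{3R}\setminus B_{R/2}$; since $\int_{\left|t\right|\le\eta R}t^{2}\left|t\right|^{-1-2s}\,dt\le c(\eta R)^{2-2s}$, this part is at most $C\eta^{2-2s}R^{-2s}\int_{\R^{N}}u\,\varphi_{R}^{1-2/m}\,dx$. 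Using H\"older's inequality with the weight $\varphi_{R}$ and then Young's inequality on both parts, with $m$ large and $\eta$ small the $u^{p}$-terms are absorbed into the left-hand side and there remains
\begin{align*}
\int_{\R^{N}}u^{p}\varphi_{R}\,dx\le C\,R^{\,N-2sp'},\qquad p'=\tfrac{p}{p-1},
\end{align*}
with $C$ independent of $R>0$.

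Since $p\le\frac{N}{N-2s}$ is equivalent to $N-2sp'\le0$, for $p<\frac{N}{N-2s}$ one has $N-2sp'<0$, and letting $R\to\infty$ (so $\varphi_{R}\uparrow1$) forces $\int_{\R^{N}}u^{p}\,dx=0$, i.e.\ $u\equiv0$, a contradiction. For $p=\frac{N}{N-2s}$ the estimate only gives $u^{p}\in L^{1}(\R^{N})$; I would then re-run the same decomposition and use this integrability to send every term to $0$ as $R\to\infty$: the $\left|t\right|\le\eta R$ part is supported in $B_{3R}\setminus B_{R/2}$ and is therefore $\le C\big(\int_{B_{3R}\setminus B_{R/2}}u^{p}\big)^{1/p}R^{(N-2sp')/p'}=C\big(\int_{B_{3R}\setminus B_{R/2}}u^{p}\big)^{1/p}\to0$, while the $\left|t\right|\ge\eta R$ part is $\le CR^{-2s}\int_{B_{2R}}u$, which, by a dyadic H\"older decomposition of $\int_{B_{2R}}u$ together with $u^{p}\in L^{1}$, is $o\big(R^{N/p'-2s}\big)=o(1)$ in the critical case; hence $\int_{\R^{N}}u^{p}=0$ and $u\equiv0$. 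The step I expect to be the main obstacle is precisely this borderline case $p=\frac{N}{N-2s}$: unlike $(-\Delta)^{s}$, the operator $\mathcal I$ has a one-dimensional kernel $\left|t\right|^{-1-2s}$ integrated only along the coordinate axes, so the ``bulk'' contribution of $\mathcal I$ applied to a ball cut-off is of size $R^{-2s}$ rather than being concentrated near an annulus, and ruling it out forces one to extract the extra $o(1)$ from $L^{1}$-integrability as above — checking that this anisotropic term is harmless under the \emph{sole} hypothesis $u\in\mathcal{L}_{s}$ (rather than $\overline{\mathcal{L}_{s}}$) is the delicate point, whereas the maximum principle and the cut-off bookkeeping of the subcritical estimate are routine.
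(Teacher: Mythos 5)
Your argument is correct in substance, and it reaches the same rescaled estimate $\int u^{p}\varphi_{R}\,dx\le CR^{N-\frac{2sp}{p-1}}$ as the paper, but by a genuinely different route. The paper never keeps $\mathcal{I}$ on $u$: it slices $\R^{N}$ into coordinate lines, uses the one-dimensional integration by parts formula of Lemma~\ref{L2-10} (which, unlike Corollary~\ref{C2-9}, only needs $u\in\mathcal{L}_{s}$ because the kernel $\left|t\right|^{-1-2s}$ is integrated in one variable), and then transfers everything to the test function via the convexity inequality $\mathcal{I}_{i}\varphi^{m}\ge m\varphi^{m-1}\mathcal{I}_{i}\varphi$ of Lemma~\ref{L2-2} together with the uniform bound $-\mathcal{I}_{i}\varphi\le M$, before repeating the H\"older/critical-case bookkeeping of Theorem~\ref{T1-1}. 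You instead truncate the kernel at $\left|t\right|=\eta R$, discard the far-field contribution by the sign of $u$ (this is precisely where $u\in\mathcal{L}_{s}$ enters, to make the tail integrals finite, as in the second half of Lemma~\ref{L2-8}), and for the near-field part move the second difference onto $\varphi_{R}$ by a per-$t$ translation; your treatment of the critical exponent (deduce $u^{p}\in L^{1}$, then split the $u$-integral at radius $\sqrt{R}$ to upgrade $O(1)$ to $o(1)$) is the same device the paper uses. What your route buys is that no integration-by-parts lemma for $\mathcal{I}_{i}$ is needed at all, only translation invariance of Lebesgue measure at fixed $t$; what it costs is the kernel-truncation bookkeeping and an anisotropy-insensitive bound on the second difference of the cut-off.

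The one step you state without proof, and which is not a bare Taylor estimate, is the pointwise bound $2\varphi_{R}(x)-\varphi_{R}(x+te_{i})-\varphi_{R}(x-te_{i})\le C\,t^{2}R^{-2}\varphi_{R}(x)^{1-2/m}$: Taylor's theorem produces $\psi^{m-2}$ evaluated at an intermediate point of the segment, which need not be comparable to $\psi(x/R)^{m-2}$ when $\left|t\right|$ is of order $R$. The bound is nevertheless true (and only its positive part matters, since $u\ge0$): when $\left|t\right|\le R\,\psi(x/R)$ the Lipschitz bound $\left|\psi(y+\tau e_{i})-\psi(y)\right|\le\left\|\nabla\psi\right\|_{\infty}\left|\tau\right|$ makes $\psi$ on the segment comparable to $\psi(x/R)$, so Taylor gives the claim; when $\left|t\right|\ge R\,\psi(x/R)$ one simply uses $2\varphi_{R}(x)-\varphi_{R}(x+te_{i})-\varphi_{R}(x-te_{i})\le2\psi(x/R)^{m}\le 2t^{2}R^{-2}\psi(x/R)^{m-2}$. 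With this two-regime argument supplied (and the harmless observation that the absorption via Young only requires $m>\frac{2p}{p-1}$, not smallness of $\eta$), your proof is complete and independent of Lemmas~\ref{L2-10} and~\ref{L2-2}.
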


We now focus on the positive solutions of the equation
\begin{align}\label{E1-7}
-\mathcal{I}u=u^{p}\quad\text{in }\R^{N}.
\end{align}
Obviously, the solutions of \eqref{E1-7} are invariant under the scaling 
\begin{align*}
\lambda^{\frac{2s}{p-1}}u\left(\lambda x+x_{0}\right),\quad\left(x_{0},\lambda\right)\in\R^{N}\times\R_{+}.
\end{align*}

As noted previously, the classification results pertaining to the positive solutions of \eqref{E1-1} and \eqref{E1-3} have been extensively investigated in the case $1<p\leq\frac{N+2s}{N-2s}$. Under the Kelvin transform
\begin{align}\label{E1-8}
u_{x_{0}}\left(x\right)=\frac{1}{\left|x-x_{0}\right|^{N-2s}}u\left(\frac{x-x_{0}}{\left|x-x_{0}\right|^{2}}+x_{0}\right),\quad x\in\R^{N}\backslash\left\{x_{0}\right\}
\end{align}
at some $x_{0}\in\R^{N}$, the function $u_{x_{0}}$ satisfies 
\begin{align*}
\left(-\Delta\right)^{s}u_{x_{0}}\left(x\right)=\frac{1}{\left|x-x_{0}\right|^{N+2s-p\left(N-2s\right)}}u_{x_{0}}^{p}\left(x\right),\quad x\in\R^{N}\backslash\left\{x_{0}\right\}
\end{align*}
and its equivalent integral equation
\begin{align*}
u_{x_{0}}\left(x\right)=\int_{\R^{N}}\frac{u_{x_{0}}^{p}\left(y\right)}{\left|x_{0}-y\right|^{N+2s-p\left(N-2s\right)}\left|x-y\right|^{N-2s}}dy,\quad x\in\R^{N}\backslash\left\{x_{0}\right\}.
\end{align*}
The moving plane method indicates that $u_{x_{0}}$ is radially symmetric, which leads to the nonexistence of $u$ provided $1<p<\frac{N+2s}{N-2s}$ and moreover $u$ enjoys the conformal invariance to possess unique form if $p=\frac{N+2s}{N-2s}$ \cite{CL,CLO}. 

However, the Kelvin transform is no longer suitably applicable to the solutions of \eqref{E1-7}. This limitation can be attributed to the absence of rotational invariance of the operator $\mathcal{I}$. Inspired by Gidas--Ni--Nirenberg \cite{GNN2}, we propose that $u$ satisfies the decay assumption
\begin{align}\label{E1-9}
u=o\left(\frac{1}{\left|x\right|^{\frac{2s}{p-1}}}\right)\quad\text{as }\left|x\right|\rightarrow+\infty.
\end{align}

For any unit vector $\nu\in\mathbb{S}^{N-1}$, we define $\left(N-1\right)$-dimensional hyperplane with unit normal vector $\nu$ that passes through a point $y\in\R^{N}$ by
\begin{align*}
P_{\nu,y}=\left\{x\in\R^{N}:\sum^{N}_{i=1}\nu_{i}\left(x_{i}-y_{i}\right)=0\right\}.
\end{align*}
The following point set will be used frequently:
\begin{align*}
\mathbf{S}_{k}:=\left\{1,...,k\right\},\quad k\in\N_{+}.
\end{align*}

By Theorem \ref{T1-2}, we may always assume $p>\frac{N}{N-2s}$. We shall prove that all the positive classical solutions of \eqref{E1-7} with the growth \eqref{E1-9} are symmetric. 
\begin{thm}\label{T1-3}
Let $0<s<1$ and $p>\frac{N}{N-2s}$. Assume $u\in C^{2}\left(\R^{N}\right)\cap\mathcal{L}_{s}$ is a positive solution of \eqref{E1-7} and $u$ satisfies \eqref{E1-9}. For any $i\in\mathbf{S}_{N}$,
\begin{itemize}
\item[(1)] $u$ is symmetric with respect to the plane
\begin{align*}
P_{e_{i},\widetilde{x}}=\left\{x\in\R^{N}:x_{i}=\widetilde{x}_{i}\right\}
\end{align*}
about some $\widetilde{x}\in\R^{N}$ and $u$ decreases in $e_{i}$ direction. 
\item[(2)] $u$ is not radially symmetric about $\widetilde{x}$.
\end{itemize}
\end{thm}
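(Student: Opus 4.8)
The plan is to run the moving plane method along each coordinate direction $e_i$, which is adapted to the operator $\mathcal{I}$ because $\mathcal{I}$ commutes with reflections across hyperplanes of the form $\{x_i = \mu\}$. Fix $i \in \mathbf{S}_N$; without loss of generality take $i=1$. For $\mu \in \R$ set $T_\mu = \{x_1 = \mu\}$, $\Sigma_\mu = \{x_1 < \mu\}$, let $x^\mu = (2\mu - x_1, x_2, \dots, x_N)$ be the reflection of $x$ across $T_\mu$, and put $u_\mu(x) = u(x^\mu)$ and $w_\mu = u_\mu - u$ on $\overline{\Sigma_\mu}$. Since $\mathcal{I}_1$ is invariant under $x_1 \mapsto 2\mu - x_1$ and the remaining $\mathcal{I}_j$ act on directions orthogonal to $e_1$ and so commute with this reflection, $u_\mu$ solves the same equation $-\mathcal{I}u_\mu = u_\mu^p$; subtracting and using the mean value theorem gives
\begin{align*}
-\mathcal{I}w_\mu(x) = c_\mu(x)\, w_\mu(x), \qquad x \in \Sigma_\mu,
\end{align*}
where $c_\mu(x) = p\,\xi_\mu(x)^{p-1}$ for some $\xi_\mu$ between $u(x)$ and $u_\mu(x)$, hence $c_\mu \geq 0$ and locally bounded. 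The nonlocal term of $\mathcal{I}w_\mu(x)$ for $x \in \Sigma_\mu$ is handled by the standard antisymmetry trick: because $w_\mu$ is antisymmetric across $T_\mu$ (i.e. $w_\mu(x^\mu) = -w_\mu(x)$), the one-dimensional integral $\mathcal{I}_1 w_\mu(x)$ can be written with a positive kernel against $w_\mu$ restricted to $\Sigma_\mu$, and the transverse pieces $\mathcal{I}_j w_\mu(x)$ ($j \neq 1$) need no modification.

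With this differential inequality in hand, the argument proceeds in the two classical steps. First (\emph{starting the plane}): using the decay \eqref{E1-9}, namely $u = o(|x|^{-2s/(p-1)})$, together with $p > \frac{N}{N-2s}$, which forces $\frac{2s}{p-1} < N - 2s$, one shows that for $\mu$ sufficiently negative $w_\mu \geq 0$ in $\Sigma_\mu$. Concretely I would build a barrier: the operator $\mathcal{I}$ applied to a function like $|x|^{-\beta}$ (or a suitable comparison function, using the explicit heat kernel / Green function bounds for $\mathcal{I}$ available from the regularity references cited, Bass--Chen and Ros-Oton--Serra) produces a positive multiple of $|x|^{-\beta - 2s}$ for $0 < \beta < N - 2s$, so one can absorb the zeroth-order term $c_\mu(x) w_\mu$ — which is small far out because $c_\mu = O(u^{p-1}) = o(|x|^{-2s})$ — and apply a maximum principle in the unbounded domain $\Sigma_\mu$ to conclude $w_\mu \geq 0$. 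Here the decay assumption is doing exactly the job the Kelvin transform would otherwise do: controlling $u$ near infinity so that the "maximum principle in narrow/small domains" type estimate closes. Then (\emph{sliding}): define $\mu^* = \sup\{\mu : w_\lambda \geq 0 \text{ in } \Sigma_\lambda \text{ for all } \lambda \leq \mu\}$. One must show $\mu^* < +\infty$ — otherwise $u$ would be monotone nondecreasing in $x_1$ on all of $\R^N$, contradicting the decay \eqref{E1-9} — and then show that at $\mu = \mu^*$ one has $w_{\mu^*} \equiv 0$, i.e. $u$ is symmetric across $T_{\mu^*}$. The latter is the usual alternative argument: if $w_{\mu^*} \not\equiv 0$, the strong maximum principle for $\mathcal{I}$ (which holds by the nonlocality: if $w_{\mu^*} \geq 0$, $w_{\mu^*}(x_0) = 0$ at an interior point, then $\mathcal{I}_1 w_{\mu^*}(x_0) > 0$ unless $w_{\mu^*}$ is identically zero along the whole line through $x_0$ in direction $e_1$, and combining with the transverse directions forces $w_{\mu^*} \equiv 0$) gives $w_{\mu^*} > 0$ in $\Sigma_{\mu^*}$, and then a Hopf-type lemma plus the same far-field barrier lets one push the plane slightly past $\mu^*$, contradicting maximality. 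Setting $\widetilde{x}_i = \mu^*$ for each $i$ yields part (1), with monotonicity $u_{\mu^*} \geq u$ in $\Sigma_{\mu^*}$ giving the stated decrease in the $e_i$ direction on one side and, by symmetry, the full statement.

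For part (2), suppose for contradiction $u$ were radially symmetric about $\widetilde{x}$, say $u(x) = v(|x - \widetilde{x}|)$; translating, assume $\widetilde{x} = 0$. Then on one hand $\mathcal{I}u$ would be computable, and on the other hand $u$ being radial would have to be an eigenfunction-like object for the non-rotationally-invariant operator $\mathcal{I}$. The cleanest route is to exploit that $-\Delta$ and $-\mathcal{I}$ disagree on radial functions off the constants: evaluate $\mathcal{I}u$ at two points on the sphere $|x| = r$ that are inequivalent under the symmetry group of $\mathcal{I}$ — e.g. $x = r e_1$ and $x = \frac{r}{\sqrt N}(1,1,\dots,1)$ — and show $\mathcal{I}u(re_1) \neq \mathcal{I}u(\tfrac{r}{\sqrt N}(1,\dots,1))$ for a nonconstant radial $u$, using the fact that $\mathcal{I}_j$ at $x$ depends only on the behavior of $u$ along the line $x + \R e_j$; the line through $re_1$ in direction $e_1$ passes through the origin (where $v$ has its extremal value) while no coordinate line through the generic point does, producing different one-dimensional integrals. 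Since $\mathcal{I}u = -u^p = -v(r)^p$ is constant on $|x|=r$, this is a contradiction unless $u$ is constant, which is incompatible with being a positive solution of \eqref{E1-7}. I expect this part to require a short explicit computation but no conceptual difficulty.

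The main obstacle I anticipate is the first step, \emph{starting the moving plane}, and more precisely setting up a clean maximum principle for $\mathcal{I}$ in the unbounded domain $\Sigma_\mu$ with the zeroth-order coefficient $c_\mu$ present. Unlike $(-\Delta)^s$, the operator $\mathcal{I}$ has its Lévy measure concentrated on the coordinate axes, so the usual barrier functions (powers of $|x|$) are not exact solutions and one needs the sharp two-sided estimates on $\mathcal{I}(|x|^{-\beta})$ — here the regularity theory of Bass--Chen and Ros-Oton--Serra supplies what is needed, but the antisymmetric maximum principle for $\mathcal{I}$ in an unbounded region (an analogue of the "$ABP$/decay at infinity" lemma of Chen--Li--Li for the fractional Laplacian) has to be proved essentially from scratch, carefully tracking how the decay rate $\frac{2s}{p-1} < N-2s$ enters. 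Once that lemma is available, the sliding step and the strong maximum principle / Hopf lemma for $\mathcal{I}$ are routine adaptations of the $(-\Delta)^s$ arguments, using that $\mathcal{I}$ retains the nonlocal positivity structure direction by direction.
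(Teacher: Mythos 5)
Your moving-plane skeleton for part (1) — reflect across $T_\mu$, use that $\mathcal{I}$ commutes with the reflection, write $\mathcal{I}_1 w_\mu$ with a positive kernel on $\Sigma_\mu$ via antisymmetry, start the plane far out, slide, and conclude symmetry at the critical position — is exactly the paper's strategy. But the step you yourself flag as the main obstacle, the ``decay at infinity'' lemma that starts the plane, is left genuinely open in your proposal, and the route you sketch for it is doubtful. You propose a barrier of the form $\left|x\right|^{-\beta}$ and claim $-\mathcal{I}\left(\left|x\right|^{-\beta}\right)$ is a positive multiple of $\left|x\right|^{-\beta-2s}$ for $0<\beta<N-2s$; by homogeneity one only gets $-\mathcal{I}\left(\left|x\right|^{-\beta}\right)=\left|x\right|^{-\beta-2s}g_{\beta}\left(x/\left|x\right|\right)$ for some function $g_{\beta}$ on the sphere, and the positivity of $g_{\beta}$ is not automatic for this anisotropic operator (the restriction of $\left|x\right|^{-\beta}$ to a coordinate line off the origin is a function like $\left(a^{2}+t^{2}\right)^{-\beta/2}$, whose one-dimensional fractional Laplacian is not sign-definite), so this would require a separate nontrivial verification. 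The paper closes this step without any barrier: at a negative minimum point $\overline{x}$ of $w_{\lambda}$ in $\Sigma_{\lambda}$, the antisymmetry trick gives
\begin{align*}
-\mathcal{I}w_{\lambda}\left(\overline{x}\right)\leq 2w_{\lambda}\left(\overline{x}\right)\int^{+\infty}_{\lambda}\frac{dt}{\left|t-\overline{x}_{1}\right|^{1+2s}},
\end{align*}
and an elementary geometric lower bound on the integral yields $-\mathcal{I}w_{\lambda}\left(\overline{x}\right)<C\left(N,s\right)w_{\lambda}\left(\overline{x}\right)\left|\overline{x}\right|^{-2s}$; plugged into $-\mathcal{I}w_{\lambda}\geq pu^{p-1}w_{\lambda}$ this forces $C\left(N,s\right)<pu^{p-1}\left(\overline{x}\right)\left|\overline{x}\right|^{2s}$, which \eqref{E1-9} rules out for large $\left|\overline{x}\right|$. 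So negative minima are confined to a fixed ball, the plane starts for $\lambda$ very negative, and the same confinement plus the narrow-domain maximum principle (Lemma \ref{L2-7}) replaces your ``Hopf-type lemma plus far-field barrier'' in the sliding step. In short, your plan needs either this direct minimum-point estimate or a proof of your barrier claim; as written, the lemma your argument hinges on is not established.

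Part (2) is also a gap: your two-point comparison ($\mathcal{I}u$ at $re_{1}$ versus $\frac{r}{\sqrt{N}}\left(1,\dots,1\right)$) rests on the assertion that these values must differ for every nonconstant radial $u$, justified only by the heuristic that one coordinate line passes through the origin and the other does not; that does not by itself exclude equality of the two one-dimensional integrals, so a real computation or a different mechanism is still needed. The paper's argument is much shorter and of a different nature: if $u$ were radial about $\widetilde{x}$, taking Fourier transforms in \eqref{E1-7} gives $\left|2\pi\xi\right|_{2s}^{2s}\widehat{v}=\widehat{v^{p}}$ with both $\widehat{v}$ and $\widehat{v^{p}}$ radial, which would force the symbol $\left|\xi\right|_{2s}$ to be radial — false for $0<s<1$. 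If you prefer your pointwise route, you must actually prove the strict inequality at some radius; otherwise adopt the symbol argument.
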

We employ the moving plane method to prove Theorem \ref{T1-3}. As a counterpart to \eqref{E1-7}, we consider the integral equation
\begin{align}\label{E1-10}
u=u^{p}\ast G_{s}\quad\text{in }\R^{N}.
\end{align}
We will introduce the potential $G_{s}$ in Section \ref{S2-5}. We use the moving plane method in integral form to show the symmetry of the positive solutions of \eqref{E1-10}.

\begin{thm}\label{T1-4}
Let $s=\frac{1}{2}$ and $p>\frac{N}{N-2s}$. Assume $u\in L^{\frac{N\left(p-1\right)}{2s}}\left(\R^{N}\right)$ is a positive solution of \eqref{E1-10}. For any $i,j\in\mathbf{S}_{N}$ with $i<j$,
\begin{itemize}
\item[(1)] $u$ is symmetric with respect to the plane
\begin{align*}
P_{e_{i},\widetilde{x}}=\left\{x\in\R^{N}:x_{i}=\widetilde{x}_{i}\right\}
\end{align*}
about some $\widetilde{x}\in\R^{N}$ and $u$ decreases in $e_{i}$ direction. 
\item[(2)] $u$ is also symmetric with respect to the plane
\begin{align*}
P_{e_{i}\pm e_{j},\widetilde{x}}=\left\{x\in\R^{N}:\left(x_{i}-\widetilde{x}_{i}\right)\pm\left(x_{j}-\widetilde{x}_{j}\right)=0\right\}
\end{align*}
and $u$ decreases in $e_{i}\pm e_{j}$ direction. 
\item[(3)] $u$ is not radially symmetric about $\widetilde{x}$.
\end{itemize}
\end{thm}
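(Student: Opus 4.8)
The plan is to prove Theorem~\ref{T1-4} by the method of moving planes in integral form, following Chen--Li--Ou \cite{CLO}, but adapted to the \emph{non-rotationally invariant} setting of the operator $\mathcal{I}$. The special value $s=\tfrac12$ is used because then the potential $G_s$ splits into a product structure along the coordinate directions, making the operator behave well under reflections across the coordinate hyperplanes $P_{e_i,\widetilde x}$ and the diagonal hyperplanes $P_{e_i\pm e_j,\widetilde x}$ (these are exactly the hyperplanes that are symmetries of the lattice of coordinate axes carrying the L\'evy measure). I would first record the key invariance property: if $R$ denotes reflection across one of these hyperplanes, then $G_s(Rx-Ry)=G_s(x-y)$, so that the convolution equation \eqref{E1-10} is preserved under $R$; this is the substitute for the rotational invariance that fails for general directions.

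For part (1), fix $i\in\mathbf{S}_N$ and a direction, say $e_i$; write $x=(x_i,x')$ and for $\lambda\in\R$ set $x_\lambda=(2\lambda-x_i,x')$, $u_\lambda(x)=u(x_\lambda)$, and $\Sigma_\lambda=\{x_i<\lambda\}$. Starting the plane from $\lambda=-\infty$ I would show, using the integrability $u\in L^{\frac{N(p-1)}{2s}}(\R^N)$ together with the Hardy--Littlewood--Sobolev inequality applied to the difference $u(x)-u_\lambda(x)=\int (G_s(x-y)-G_s(x_\lambda-y))(u^p(y)-u_\lambda^p(y))\,dy$ on $\Sigma_\lambda$, that for $\lambda$ sufficiently negative the set $\Sigma_\lambda^-=\{x\in\Sigma_\lambda: u(x)>u_\lambda(x)\}$ has small measure and the resulting estimate forces $\Sigma_\lambda^-$ to be empty; the decay hypothesis is what lets this start. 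Then I would move $\lambda$ up to its critical value $\widetilde x_i=\sup\{\lambda: u\le u_\mu \text{ in }\Sigma_\mu \text{ for all }\mu\le\lambda\}$ and show by a standard continuity/contradiction argument (again HLS on a thin region, plus the strict positivity of $u$ which makes the kernel difference strictly positive) that at $\widetilde x_i$ one must have $u\equiv u_{\widetilde x_i}$, i.e.\ symmetry across $P_{e_i,\widetilde x}$, and monotonicity in between. A short argument then reconciles the symmetry centers obtained for different $i$ into a single point $\widetilde x$.

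Part (2) is handled the same way, but now reflecting across the diagonal hyperplane $P_{e_i\pm e_j,\widetilde x}$: after the change of coordinates that makes $e_i\pm e_j$ a coordinate direction, the crucial point is that $G_s$ is still invariant under this reflection when $s=\tfrac12$, so the integral identity and all HLS estimates go through verbatim, yielding symmetry and monotonicity in the $e_i\pm e_j$ direction. For part (3), I would argue by contradiction: if $u$ were radially symmetric about $\widetilde x$, then by the rigidity of \eqref{E1-10} under rotations it would solve the equation with $G_s$ replaced by its rotational average, forcing $u$ to satisfy an equation associated with a rotationally invariant operator; comparing the resulting decay/classification (the radial solutions of the fractional-Laplacian-type equation have a definite homogeneity) against the product structure of $G_s$ for $s=\tfrac12$ gives a contradiction — alternatively, and more concretely, one exhibits two points at equal distance from $\widetilde x$ (one on a coordinate axis, one on a generic ray) for which the convolution with $G_s$ takes different values whenever $u\not\equiv 0$, which is incompatible with radial symmetry.

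The main obstacle I anticipate is the \emph{starting step} of the moving plane method: establishing that $\Sigma_\lambda^-$ has small measure for $\lambda\to-\infty$. In the classical CLO argument this uses the Kelvin transform or a decay bound coming from the equation; here the Kelvin transform is unavailable (as the authors stress), so I would need to extract the needed decay of $u$ directly from the integral equation \eqref{E1-10} and the membership $u\in L^{\frac{N(p-1)}{2s}}$, controlling the anisotropic tail behavior of $G_s$. A secondary technical point is verifying the reflection invariance $G_s(Rx)=G_s(x)$ with the explicit form of $G_s$ from Section~\ref{S2-5} for $s=\tfrac12$, and checking that the HLS constants in the thin-region estimates genuinely go to zero so the method closes; these are the places where the restriction $s=\tfrac12$ and the geometry of the coordinate axes are doing the real work.
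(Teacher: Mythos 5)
Your proposal follows essentially the same route as the paper: moving planes in integral form à la Chen--Li--Ou, using the reflection invariance and strict monotonicity of $G_{1/2}$ (via its explicit product formula) across the coordinate and diagonal hyperplanes, HLS plus H\"older to close the estimates, and the anisotropy of the symbol $\left|\xi\right|_{2s}$ to rule out radial symmetry. The one obstacle you flag — extracting pointwise decay of $u$ to start the plane — is not actually needed: the hypothesis $u\in L^{\frac{N\left(p-1\right)}{2s}}\left(\R^{N}\right)$ alone makes $\left\|u\right\|_{L^{\frac{N\left(p-1\right)}{2s}}\left(\Sigma_{\lambda}\right)}$ small as $\lambda\rightarrow-\infty$ (absolute continuity of the integral), which is exactly how the paper both starts and, via a measure-smallness argument near the critical plane (rather than a thin-region/strict-positivity estimate), continues the moving plane for \eqref{E1-10}.
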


We anticipate that Theorem \ref{T1-4} may encompass all the range $0<s<1$, we have only been able to show in Section \ref{S2-5} that the potential $G_{s}$ is positive and decreasing when $s=\frac{1}{2}$. 

The assumptions that $u\in C^{2}\left(\R^{N}\right)\cap\mathcal{L}_{s}$ is a positive solution of \eqref{E1-7} and $u$ satisfies \eqref{E1-9} imply that $u\in L^{\frac{N\left(p-1\right)}{2s}}\left(\R^{N}\right)$ satisfies \eqref{E1-10}, see in Section \ref{S2-6}. Consequently, we can derive the following corollary.
\begin{cor}
Let $0<s<1$ and $p>\frac{N}{N-2s}$. Assume $u\in C^{2}\left(\R^{N}\right)\cap\mathcal{L}_{s}$ is a positive solution of \eqref{E1-7} and $u$ satisfies \eqref{E1-9}. For any $i,j\in\mathbf{S}_{N}$ with $i<j$,
\begin{itemize}
\item[(1)] $u$ is symmetric with respect to the plane $P_{e_{i},\widetilde{x}}$ about some $\widetilde{x}\in\R^{N}$ and $u$ decreases in $e_{i}$ direction. 
\item[(2)] When $s=\frac{1}{2}$, $u$ is also symmetric with respect to the plane $P_{e_{i}\pm e_{j},\widetilde{x}}$ and $u$ decreases in $e_{i}\pm e_{j}$ direction. 
\item[(3)] $u$ is not radially symmetric about $\widetilde{x}$.
\end{itemize}

\begin{figure}[htbp]
\centering
\includegraphics[width=0.4\textwidth]{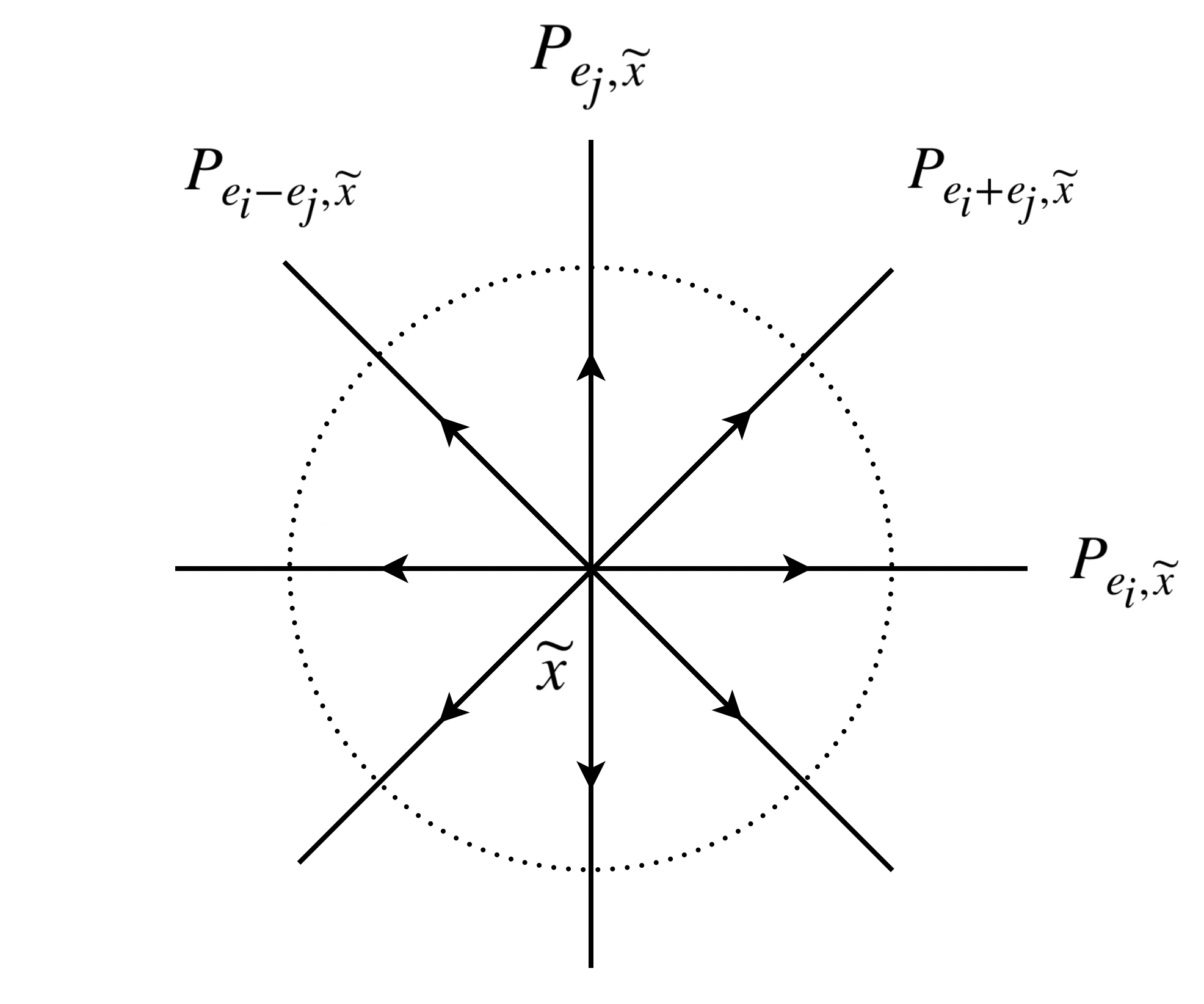}
\caption{Symmetry hyperplanes and decreasing directions when $s=\frac{1}{2}$.}
\end{figure}
\end{cor}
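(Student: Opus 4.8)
The plan is to read the Corollary off from Theorem~\ref{T1-3} and Theorem~\ref{T1-4}, bridged by the equivalence between \eqref{E1-7} and the integral equation \eqref{E1-10} recorded in Section~\ref{S2-6}. For every $0<s<1$, the hypotheses assumed here (namely $u\in C^{2}(\R^{N})\cap\mathcal{L}_{s}$ a positive solution of \eqref{E1-7} with the decay \eqref{E1-9}) are exactly those of Theorem~\ref{T1-3}, so part~(1) and part~(3) follow immediately: there is a point $\widetilde{x}\in\R^{N}$ with $u$ symmetric about each $P_{e_{i},\widetilde{x}}$, decreasing in the $e_{i}$ direction, and not radially symmetric about $\widetilde{x}$.

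For part~(2) I would first invoke the passage established in Section~\ref{S2-6}: a positive $u\in C^{2}(\R^{N})\cap\mathcal{L}_{s}$ solving \eqref{E1-7} and satisfying \eqref{E1-9} lies in $L^{\frac{N(p-1)}{2s}}(\R^{N})$ and solves \eqref{E1-10}. When $s=\frac{1}{2}$ this places $u$ under the hypotheses of Theorem~\ref{T1-4}, whose conclusions then give the symmetry of $u$ with respect to each hyperplane $P_{e_{i}\pm e_{j},\widetilde{x}'}$ together with the monotonicity in the directions $e_{i}\pm e_{j}$, for some $\widetilde{x}'\in\R^{N}$.

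The one point needing care is that Theorem~\ref{T1-3} and Theorem~\ref{T1-4} a priori deliver their own centres $\widetilde{x}$ and $\widetilde{x}'$, and one must verify $\widetilde{x}=\widetilde{x}'$ so that all the hyperplanes in (1) and (2) pass through a common point. This is a uniqueness statement: both theorems force $u$ to be symmetric about $P_{e_{i},\widetilde{x}}$ and about $P_{e_{i},\widetilde{x}'}$ for every $i$, while the strict monotonicity in each $e_{i}$ direction rules out $u$ being constant; were $\widetilde{x}\neq\widetilde{x}'$, composing the two coordinate reflections in a direction where the centres differ would produce a nontrivial translation leaving $u$ invariant, contradicting the decay \eqref{E1-9}. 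Hence $\widetilde{x}=\widetilde{x}'$, and the Corollary follows. I do not expect the Corollary itself to be the hard part; the substantive work sits in Section~\ref{S2-6} (passing between the classical decaying solution and the $L^{\frac{N(p-1)}{2s}}$ solution of \eqref{E1-10}) and in the moving-plane proofs of Theorems~\ref{T1-3}--\ref{T1-4}, while within this argument the identification of the common centre is the only genuine step.
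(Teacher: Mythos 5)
Your proposal follows exactly the paper's route: parts (1) and (3) are read off from Theorem \ref{T1-3}, and part (2) from Theorem \ref{T1-4} after passing to the integral equation \eqref{E1-10} via the equivalence in Section \ref{S2-6}. Your extra step identifying the two centres (using that two parallel reflection symmetries would force a translation invariance incompatible with the decay \eqref{E1-9} and positivity) is correct and is a point the paper leaves implicit.
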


\subsection{Problem in the half space}
A well-known Liouville result proved by Berestycki--Capuzzo Dolcetta--Nirenberg \cite{BCN} in the half space
\begin{align*}
\R^{N}_{+}=\left\{x\in\R^{N}:x_{N}>0\right\}
\end{align*}
states that any nonnegative classical solution of 
\begin{align*}
-\Delta u\geq u^{p}\quad\text{in }\R_{+}^{N}
\end{align*} 
is trivial solution if and only if $1<p\leq\frac{N+1}{N-1}$. It is noteworthy that the extension result involving $\left(-\Delta\right)^{s}$ can be found in the result by Nornberg--dos Prazeres--Quaas \cite{NPQ}. 

We consider the nonnegative solutions $u\in C^{2}\left(\R_{+}^{N}\right)\cap\overline{\mathcal{L}_{s}}$ of the problem
\begin{align}\label{E1-11}
\left\lbrace 
\begin{aligned}
-\mathcal{I}u&\geq u^{p}&&\text{in }\R_{+}^{N},\\
u&\geq0&&\text{in }\R^N\backslash\R_{+}^{N}.
\end{aligned}
\right.
\end{align}
We establish a Liouville result as follows.
\begin{thm}\label{T1-6}
Let $0<s<1$ and $1<p\leq\frac{N+s}{N-s}$. Assume $u\in C^{2}\left(\R_{+}^{N}\right)\cap\overline{\mathcal{L}_{s}}$ is a nonnegative solution of \eqref{E1-11}, then $u\equiv0$.
\end{thm}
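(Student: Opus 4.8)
\textbf{Proof proposal for Theorem \ref{T1-6}.}

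The plan is to mimic the whole-space argument behind Theorem \ref{T1-1}, but to replace the globally supported test functions by test functions supported in balls $B_{2R}(x^{R})$ whose centre $x^{R}$ escapes to infinity inside the cone $\{x_{N}>x'\text{-directions}\}$, so that the ``boundary half'' $\R^{N}\setminus\R^{N}_{+}$, where we only know $u\ge 0$, contributes the right lower-order term. Concretely: since $u\in C^{2}(\R^{N}_{+})\cap\overline{\cl_{s}}$ is a distributional solution (the analogue of Corollary \ref{C2-9} for the half space, which I would state and use), for every nonnegative $\va\in C^{\infty}_{0}(\R^{N}_{+})$ we have $\int_{\R^{N}_{+}}u^{p}\va\,dx\le\int u(-\ci\va)\,dx$. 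I would fix a cut-off $\psi\in C^{\infty}_{0}(B_{2})$ with $\psi\equiv 1$ on $B_{1}$, $0\le\psi\le 1$, set $\psi_{R,\xi}(x)=\psi^{m}\big((x-\xi)/R\big)$ for a large power $m=m(p,s)$, and choose the centre $\xi=\xi_{R}=(0,\dots,0,\kappa R)$ with $\kappa>2$ so that $B_{2R}(\xi_{R})\subset\R^{N}_{+}$. The key one-dimensional estimate is that, exactly as in the proof behind Theorem \ref{T1-1}, $|\ci\psi_{R,\xi}(x)|\le C R^{-2s}\psi^{m-2s/?}$-type bound holds pointwise, uniformly in $\xi$; I would quote/adapt that ``fundamental inequality involving $\ci$ and compactly supported test functions'' mentioned after Theorem \ref{T1-1}.

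The heart of the matter is the scaling count. Testing against $\va=\psi_{R,\xi_{R}}$ and applying Hölder with exponents $p$ and $p'=p/(p-1)$ to $\int u(-\ci\va)\le\int u^{p}\va\cdot\big(\int|\ci\va|^{p'}\va^{1-p'}\big)^{1/p'}$ gives
\begin{align*}
\int_{\R^{N}_{+}}u^{p}\psi_{R,\xi_{R}}\,dx\le C\,R^{\,N/p'-2s}.
\end{align*}
For the whole-space problem $N/p'-2s<0$ precisely when $p<N/(N-2s)$, which is why the Serrin exponent appears there. In the half space the gain is that the cones $B_{2R}(\xi_{R})$ foliate $\R^{N}_{+}$ with a \emph{one-dimensional} family, so after covering $\R^{N}_{+}$ by countably many such balls (or, cleaner, by integrating the estimate over a suitable one-parameter family of centres and using Fubini) one effectively trades one power of $R$: the relevant exponent becomes $(N-1)/p'-2s+\text{(correction from the }x_{N}\text{-direction)}$, and chasing the bookkeeping produces the threshold $p\le\frac{N+s}{N-s}$. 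I would carry this out by first proving, for each fixed $R$, the localized bound above with a constant independent of $\xi$, then summing over a lattice of centres $\xi^{(k)}=(0,\dots,0,(2+2k)R)$, $k\ge 0$, whose balls have bounded overlap, to obtain $\int_{\{x_{N}>3R\}}u^{p}\,dx\le C\sum_{k}R^{N/p'-2s}$; this diverges, so one instead weights the sum, or equivalently integrates $\xi_{N}$ from $3R$ to $\infty$ against $d\xi_{N}$, which is what upgrades $N$ to $N-1$ and simultaneously forces the decay rate in $x_{N}$ that yields $p\le\frac{N+s}{N-s}$. Once $\int_{\R^{N}_{+}}u^{p}<\infty$ is obtained from the subcritical inequality, a second scaling (letting $R\to\infty$ in the localized inequality and using absolute continuity of the integral to kill the right-hand side) forces $\int_{\R^{N}_{+}}u^{p}\va=0$ for all $\va$, hence $u\equiv 0$ in $\R^{N}_{+}$, and then also on all of $\R^{N}$ after revisiting \eqref{E1-11}.

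The main obstacle I anticipate is controlling $\ci\va$ near the boundary $\{x_{N}=0\}$: because $\ci=\sum_{i}(-\pa_{ii})^{s}$ is a \emph{sum of one-dimensional nonlocal} operators, the $i=N$ piece $(-\pa_{NN})^{s}\va(x)$ at a point $x$ with small $x_{N}$ sees the region $\R^{N}\setminus\R^{N}_{+}$ along the single line $x+\R e_{N}$, where we only have $u\ge 0$ rather than an equation; this is different from the fractional-Laplacian case, where the nonlocal tail is spread over a full solid cone. So the delicate step is to show that this one-line tail still contributes with a favourable sign (it does, since $u\ge 0$ there and the kernel is positive, so it only helps in the supersolution inequality) \emph{and} that the test-function side $\ci\va$ does not pick up an extra singular contribution from that line — here one uses that $\va=\psi_{R,\xi_{R}}$ is supported well inside $\R^{N}_{+}$, at distance $\gtrsim R$ from $\{x_{N}=0\}$, so the $i=N$ tail of $\ci\va$ is of order $R^{-1-2s}\cdot R = R^{-2s}$, consistent with the other directions. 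Making the constants in this tail estimate uniform as the centre $\xi_{R}$ slides along the $x_{N}$-axis, and checking that the $\overline{\cl_{s}}$ growth of $u$ is exactly what is needed to justify the integration-by-parts (distributional) identity in the half-space with these noncompactly-decaying tails, is the technical crux; everything else is the scaled test-function computation already developed for Theorem \ref{T1-1}.
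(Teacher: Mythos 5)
There is a genuine gap, and it is at the heart of the matter: your scheme never sees the boundary, so it cannot produce the exponent $\frac{N+s}{N-s}$. Testing against cut-offs supported in balls $B_{2R}(\xi_R)$ whose centres sit at height $\kappa R$ gives, uniformly in the centre, exactly the whole-space estimate $\int u^{p}\varphi_R\leq CR^{\,N-\frac{2sp}{p-1}}$ of Theorem \ref{T1-1}; since such balls lie at distance comparable to their radius from $\partial\R^{N}_{+}$, this estimate is blind to the exterior condition and can only yield the Serrin threshold $\frac{N}{N-2s}$. No summation or integration over the family of centres can repair this: your own bookkeeping, $\frac{N-1}{p'}-2s\leq0$, gives $p\leq\frac{N-1}{N-1-2s}$, which is not $\frac{N+s}{N-s}$, and the claim that integrating in $\xi_N$ ``upgrades $N$ to $N-1$ and forces the decay rate in $x_N$'' is asserted rather than argued (you even note that the lattice sum diverges). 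The mechanism that actually produces $\frac{N+s}{N-s}$ in the paper is a boundary weight: one tests against $\phi_{s,R}(x)=(x_N)_+^{s}\varphi_R(x)$, where $\varphi_R$ is a cut-off on the ball $B_R\bigl(\tfrac{R}{2}e_N\bigr)$ touching the boundary, exploiting that $(x_N)_+^{s}$ is annihilated by $\mathcal{I}$ in the half space (Lemma \ref{L2-3}) together with the product rule (Lemma \ref{L2-4}) and the continuity of the bilinear remainder (Lemma \ref{L2-5}) to get the key pointwise bound $-\mathcal{I}\phi_{\alpha_0}-\mathcal{I}\phi_{s}\leq M\phi_{s}$ with an auxiliary $\alpha_0\in(s,\min\{1,2s\})$. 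It is the weight in the H\"older step, $\int_{B_{2R}}(x_N)_+^{s}dx\sim R^{N+s}$, that raises the effective dimension from $N$ to $N+s$ and yields $\int u^{p}\phi_{s,R}\leq CR^{\,N+s-\frac{2sp}{p-1}}$, hence the threshold $p\leq\frac{N+s}{N-s}$ (the critical case being handled by the usual splitting over $\{|x|\leq\sqrt{R}\}$ and $\{\sqrt{R}\leq|x|\leq2R\}$). None of this appears in your proposal.

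Two further structural points you would also need. First, since \eqref{E1-11} only prescribes $u\geq0$ outside $\R^{N}_{+}$, the paper first passes to $\widetilde{u}(x)=u(x+e_N)$ in $\R^{N}_{+}$, extended by zero (Lemma \ref{L4-1}); this truncation both preserves the supersolution inequality and provides the vanishing exterior data and interior-up-to-the-boundary regularity \eqref{E2-13} needed for the integration-by-parts formula with the non-smooth test function $(x_N)_+^{\alpha}\varphi$ (Lemma \ref{L2-11}) --- your appeal to a ``half-space analogue of Corollary \ref{C2-9}'' for plain $C^{\infty}_{0}$ test functions does not substitute for this, because the decisive test functions are not smooth across $\{x_N=0\}$. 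Second, after the Liouville result for the translated problem one still has to recover $u\equiv0$ for the original \eqref{E1-11}; the paper does this by evaluating the inequality on the hyperplane $\{x_N=1\}$ and using the one-dimensional diffusion along coordinate lines (as in Lemma \ref{L2-6}), a step absent from your outline. Your observation that the exterior tail only helps by sign is correct but does not compensate for the missing boundary-weight mechanism.
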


The proof of Theorem \ref{T1-6} is motivated by the arguments of Theorem \ref{T1-1}, where differently the non-smooth test function is much more involved. In particular, the diffusion of $\mathcal{I}$ is only along the coordinate axes directions, hence similar to the approach taken by Birindelli--Du--Galise \cite{BDG}, which addresses the operator $L_{a}$ with the non-vanishing diffusion along a conical cone, the idea of moving balls is also required. 

The nonexistence result of the positive classical solutions of 
\begin{align*}
-\Delta u=u^{p}\quad\text{in }\R_{+}^{N}
\end{align*}
for $1<p\leq\frac{N+2}{N-2}$ was proved by Gidas--Spruck \cite{GS1,GS2}. Chen--Fang--Yang \cite{CFY} and Chen--Li--Li \cite{CLL} extended the nonexistence result to $1<p\leq\frac{N+2s}{N-2s}$ for the fractional problem
\begin{align*}
\left\lbrace 
\begin{aligned}
\left(-\Delta\right)^{s}u&=u^{p}&&\text{in }\R_{+}^{N},\\
u&=0&&\text{in }\R^N\backslash\R_{+}^{N}.
\end{aligned}
\right.
\end{align*}
By applying the Kelvin transform \eqref{E1-8} at some $x_{0}\in\partial\R_{+}^{N}$, the positive function $u_{x_{0}}$ satisfies 
\begin{align*}
\left(-\Delta\right)^{s}u_{x_{0}}\left(x\right)=\frac{1}{\left|x-x_{0}\right|^{N+2s-p\left(N-2s\right)}}u_{x_{0}}^{p}\left(x\right),\quad x\in\R_{+}^{N}
\end{align*}
and also the integral equation involving the Green function $G_{+}$ of $\left(-\Delta\right)^{s}$ in $\R_{+}^{N}$, namely
\begin{align*}
u_{x_{0}}\left(x\right)=\int_{\R_{+}^{N}}G_{+}\left(x,y\right)\frac{u_{x_{0}}^{p}\left(y\right)}{\left|x_{0}-y\right|^{N+2s-p\left(N-2s\right)}}dy,\quad x\in\R_{+}^{N}.
\end{align*}
In this way a direct or integral form moving plane method works. 

We aim to establish Liouville type results to the nonlocal problem with Dirichlet boundary condition
\begin{align}\label{E1-12}
\left\lbrace 
\begin{aligned}
-\mathcal{I}u&=u^{p}&&\text{in }\R_{+}^{N},\\
u&=0&&\text{in }\R^N\backslash\R_{+}^{N}.
\end{aligned}
\right.
\end{align}
Since there is no Kelvin transform for the problem \eqref{E1-12}, the assumption \eqref{E1-9} is necessary to guarantee the appropriate decay of $u$ at infinity.
\begin{thm}\label{T1-7}
Let $0<s<1$ and $p>1$. Assume $u\in C^{2}\left(\R_{+}^{N}\right)\cap\mathcal{L}_{s}$ is a nonnegative solution of \eqref{E1-12} and $u$ satisfies \eqref{E1-9}, then $u\equiv0$.
\end{thm}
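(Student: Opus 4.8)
\textbf{Proof strategy for Theorem \ref{T1-7}.} The plan is to combine the moving plane method in the $x_N$-direction, which is available because the half space and the Dirichlet condition are invariant under reflections $x_N\mapsto 2\lambda-x_N$, with the Liouville result of Theorem \ref{T1-2} (or Theorem \ref{T1-6}) for the whole space. First I would set up the moving planes $T_\lambda=\{x_N=\lambda\}$ for $\lambda>0$, writing $x^\lambda$ for the reflection of $x$ across $T_\lambda$, $\Sigma_\lambda=\{0<x_N<\lambda\}$, and $w_\lambda(x)=u(x^\lambda)-u(x)$. The key structural fact is that $\mathcal{I}_i$ commutes with the reflection in $x_N$: for $i\neq N$ this is immediate since the reflection only changes the $N$-th coordinate, and for $i=N$ it follows from the symmetry of the one-dimensional kernel $|t|^{-1-2s}$. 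Hence $w_\lambda$ satisfies an inequality of the form $-\mathcal{I}w_\lambda\geq c_\lambda(x)w_\lambda$ in $\Sigma_\lambda$, where $c_\lambda(x)=p\,\xi_\lambda(x)^{p-1}\geq 0$ comes from the mean value theorem applied to $t\mapsto t^p$, together with the boundary/exterior information $w_\lambda\geq 0$ on $\R^N\setminus\Sigma_\lambda$ (this uses $u=0$ outside $\R^N_+$ and $u>0$ inside, which forces $w_\lambda\geq0$ on the reflected exterior part and on $T_\lambda$ itself is an equality).

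The argument then has the two standard phases. For the \emph{start}, I would show $w_\lambda\geq 0$ in $\Sigma_\lambda$ for all small $\lambda>0$. Here the decay hypothesis \eqref{E1-9} is essential: it guarantees that $u$ is small near the boundary, so one can run a maximum principle in the narrow strip $\Sigma_\lambda$. Concretely, if $w_\lambda$ were negative somewhere, because of \eqref{E1-9} the negative infimum is attained (the function $w_\lambda(x)=u(x^\lambda)-u(x)$ tends to $0$ as $|x|\to\infty$ in $\Sigma_\lambda$, since on the strip $x^\lambda$ is comparable to $x$ at infinity), at an interior point $\bar x$; evaluating $\mathcal{I}w_\lambda(\bar x)$ at a negative minimum gives $-\mathcal{I}w_\lambda(\bar x)\leq 0$ strictly unless $w_\lambda$ is constant, contradicting $-\mathcal{I}w_\lambda(\bar x)\geq c_\lambda(\bar x)w_\lambda(\bar x)\geq$ a quantity that forces a sign clash once the strip is thin enough that the nonlocal ``good'' contribution from the exterior (where $w_\lambda\geq0$) dominates $c_\lambda$. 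For the \emph{continuation}, set $\lambda_0=\sup\{\lambda>0: w_\mu\geq0 \text{ in }\Sigma_\mu \text{ for all } \mu\leq\lambda\}$ and suppose $\lambda_0<\infty$. By continuity $w_{\lambda_0}\geq0$ in $\Sigma_{\lambda_0}$, and a strong maximum principle (again reading off the sign of $\mathcal{I}w_{\lambda_0}$ at an interior zero, using that the kernel is strictly positive along each axis and $w_{\lambda_0}>0$ somewhere by the exterior data) gives $w_{\lambda_0}>0$ in $\Sigma_{\lambda_0}$. One then pushes the plane slightly further: for $\lambda\in(\lambda_0,\lambda_0+\varepsilon)$, decompose $\Sigma_\lambda$ into a compact core where $w_{\lambda_0}$ is bounded below by a positive constant (so $w_\lambda$ stays positive by continuity) and the thin remainder $\Sigma_\lambda\setminus\Sigma_{\lambda_0-\delta}$ plus a far-away region, where the narrow-region/decay estimate from the start phase applies. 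This yields $w_\lambda\geq0$ for $\lambda$ slightly bigger than $\lambda_0$, a contradiction. Therefore $\lambda_0=\infty$, i.e. $u(x^\lambda)\geq u(x)$ for every $\lambda>0$ and every $x\in\Sigma_\lambda$.

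Finally I would exploit that $\lambda_0=\infty$ to conclude triviality. Fixing $x$ with $x_N>0$ and letting $\lambda\to\infty$, the point $x^\lambda$ has $(x^\lambda)_N=2\lambda-x_N\to+\infty$, so $u(x^\lambda)\geq u(x)$ for all large $\lambda$ together with the decay \eqref{E1-9} forces $u(x)\leq \lim_{\lambda\to\infty}u(x^\lambda)=0$; hence $u\equiv0$ in $\R^N_+$, and then $u\equiv0$ in $\R^N$ by the exterior condition. (Alternatively, monotonicity in $x_N$ for all $\lambda$ forces $u$ to be nondecreasing in $x_N$, which combined with \eqref{E1-9} again gives $u\equiv0$; or one can observe the monotonicity makes $\inf_{x_N>a}u$ a positive solution of the whole-space problem after translating, contradicting Theorem \ref{T1-2}.) I expect the main obstacle to be the nonlocal maximum principle bookkeeping: unlike the local case, at a would-be negative minimum $\bar x\in\Sigma_\lambda$ the quantity $\mathcal{I}w_\lambda(\bar x)$ receives contributions from the lines through $\bar x$ that exit $\Sigma_\lambda$, and one must carefully split these into the part landing in the region where $w_\lambda\geq0$ (which helps) and control everything in terms of the strip width and the decay rate in \eqref{E1-9}; getting the narrow-region estimate genuinely uniform as $|\bar x|\to\infty$ is the delicate point, and it is exactly there that \eqref{E1-9} — rather than mere boundedness — is used.
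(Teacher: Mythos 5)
Your proposal follows essentially the same route as the paper: moving planes in the $e_{N}$ direction started by the narrow-region maximum principle (Lemma \ref{L2-7}), with the decay hypothesis \eqref{E1-9} confining negative minima and the zero exterior data driving the strong-maximum-principle step at the critical plane; the paper merely organizes the endgame as a dichotomy ($\lambda_{1}$ finite forces $w_{\lambda_{1}}\equiv0$ and hence $u=0$ for $x_{N}>2\lambda_{1}$, while $\lambda_{1}=+\infty$ gives monotonicity contradicting \eqref{E1-9} and positivity), which is your argument in a different order. The only point you gloss over is the preliminary reduction, via the strong maximum principle (Lemma \ref{L2-6} with $\Omega=\R_{+}^{N}$), from a nonnegative solution to the alternative $u>0$ or $u\equiv0$ in $\R_{+}^{N}$, which is what licenses your repeated use of strict positivity of $u$ inside the half space.
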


\subsection{Problem in the unit ball}
Finally, we establish a nonlocal version of Gidas--Ni--Nirenberg \cite{GNN1} problem
\begin{align}\label{E1-13}
\left\lbrace 
\begin{aligned}
-\mathcal{I}u&=f\left(u\right)&&\text{in }B_{1},\\
u&>0&&\text{in }B_{1},\\
u&=0&&\text{in }\R^{N}\backslash B_{1}.
\end{aligned}
\right.
\end{align}

\begin{thm}\label{T1-8}
Let $0<s<1$ and $f$ is a Lipschitz continuous function. Assume $u\in C^{2}\left(B_{1}\right)\cap\mathcal{L}_{s}$ is a solution of \eqref{E1-13}. For any $i\in\mathbf{S}_{N}$, $u$ is symmetric with respect to the plane $P_{e_{i},0}$ and $u$ decreases in $e_{i}$ direction. 
\end{thm}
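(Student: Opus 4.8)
The plan is to prove Theorem \ref{T1-8} by the method of moving planes adapted to the operator $\mathcal{I}$, exploiting the fact that each component $\mathcal{I}_i$ is a one-dimensional fractional Laplacian acting in the $e_i$ direction. Fix $i\in\mathbf{S}_N$; by relabelling coordinates it suffices to treat $i=N$ and prove symmetry in the $x_N$-direction. For $\lambda\in(-1,0)$ write $T_\lambda=\{x_N=\lambda\}$, let $\Sigma_\lambda=\{x\in B_1:x_N<\lambda\}$, and let $x^\lambda=(x_1,\dots,x_{N-1},2\lambda-x_N)$ denote the reflection of $x$ across $T_\lambda$. Define the antisymmetric function $w_\lambda(x)=u(x^\lambda)-u(x)$ for $x\in\Sigma_\lambda$, extended by $w_\lambda=0$ outside $B_1\cup B_1^\lambda$ where appropriate, so that $w_\lambda$ is odd under reflection across $T_\lambda$. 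The key pointwise fact is that $\mathcal{I}$ commutes with the reflection $x\mapsto x^\lambda$ (since the reflection permutes the coordinate directions $e_1,\dots,e_{N-1}$ trivially and only reverses the $e_N$-line, along which $\mathcal{I}_N$ is even in $t$), so $\mathcal{I}u(x^\lambda)=(\mathcal{I}u)(x^\lambda)$, and hence in $\Sigma_\lambda$ one gets
\begin{align*}
-\mathcal{I}w_\lambda(x)=f(u(x^\lambda))-f(u(x))=c_\lambda(x)\,w_\lambda(x),
\end{align*}
where $c_\lambda(x)=\frac{f(u(x^\lambda))-f(u(x))}{u(x^\lambda)-u(x)}$ is bounded by the Lipschitz constant of $f$. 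One must also record the sign contribution coming from the nonlocal tails: for $x\in\Sigma_\lambda$, splitting the integral defining $-\mathcal{I}w_\lambda(x)$ using the antisymmetry of $w_\lambda$ across $T_\lambda$ produces an extra nonnegative kernel term multiplying $w_\lambda$ when $w_\lambda\geq0$ on $\Sigma_\lambda$, so that effectively $-\mathcal{I}w_\lambda(x)\le c_\lambda(x)w_\lambda(x)$ at any interior negative minimum point — this is the nonlocal analogue of the classical differential inequality.

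Next I would run the standard two-stage argument. \emph{Stage 1 (getting started).} For $\lambda$ close to $-1$, $\Sigma_\lambda$ has small measure, and since $u=0$ on $\partial B_1$ with $u>0$ inside, $w_\lambda\ge0$ on the part of $\partial\Sigma_\lambda$ lying on $\partial B_1$ while $w_\lambda=0$ on $T_\lambda$; combined with $u=0$ outside $B_1$ one checks $w_\lambda\ge0$ on $\R^N\setminus\Sigma_\lambda$. Suppose $\inf_{\Sigma_\lambda}w_\lambda<0$; it is attained at some $\bar x\in\Sigma_\lambda$ because $w_\lambda$ is continuous, vanishes on the relevant boundary pieces, and the relevant tails are controlled. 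At $\bar x$, $\mathcal{I}w_\lambda(\bar x)\ge0$ (a minimum of a function that is $\ge$ its reflected values), yielding $0\le -\mathcal{I}w_\lambda(\bar x)\le c_\lambda(\bar x)w_\lambda(\bar x)<0$ once we know $c_\lambda(\bar x)\ge0$; to remove the sign restriction on $c_\lambda$ one uses that the nonlocal term gives a quantitative gain proportional to an integral of $w_\lambda^-$ over $\Sigma_\lambda$, which by Hölder is $\le C|\Sigma_\lambda|^{\alpha}\sup|w_\lambda|$, beating $\|c_\lambda\|_\infty$ when $|\Sigma_\lambda|$ is small. Hence $w_\lambda\ge0$ in $\Sigma_\lambda$ for $\lambda$ near $-1$. \emph{Stage 2 (continuation).} Let $\lambda_0=\sup\{\lambda\le0: w_\mu\ge0 \text{ in }\Sigma_\mu\ \forall\mu\le\lambda\}$. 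If $\lambda_0<0$, a limiting argument gives $w_{\lambda_0}\ge0$ in $\Sigma_{\lambda_0}$; a strong maximum principle for $\mathcal{I}$ (available since $\mathcal{I}$ has a Lévy measure supported on all coordinate axes, which is enough to propagate positivity through $B_1$ — one applies it line by line in each direction and uses connectedness) forces $w_{\lambda_0}>0$ in the interior of $\Sigma_{\lambda_0}$ unless $w_{\lambda_0}\equiv0$; the latter is impossible when $\lambda_0<0$ because then $u$ would vanish on a reflected piece outside $B_1$ while being positive inside. Then a standard argument — combining the strong positivity of $w_{\lambda_0}$ on compact subsets with the smallness of $|\Sigma_{\lambda_0+\varepsilon}\setminus\Sigma_{\lambda_0}|$ — shows $w_{\lambda_0+\varepsilon}\ge0$ for small $\varepsilon>0$, contradicting maximality. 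Therefore $\lambda_0=0$, giving $w_0\ge0$, i.e. $u(x^0)\ge u(x)$ for $x_N<0$. Repeating from the opposite side (moving planes from $+1$ leftward) gives the reverse inequality, so $u$ is symmetric about $\{x_N=0\}=P_{e_N,0}$ and monotone decreasing in the $x_N$-direction on $\{x_N>0\}$.

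The main obstacle, and the step I would spend the most care on, is the correct handling of the nonlocal tail terms when the Lévy measure of $\mathcal{I}$ is singular and concentrated on the coordinate axes rather than absolutely continuous. Concretely: (i) justifying that $w_\lambda$ can be extended so that $-\mathcal{I}w_\lambda(x)\le c_\lambda(x)w_\lambda(x)$ holds in $\Sigma_\lambda$ requires carefully decomposing $\mathcal{I}_k w_\lambda(x)$ for each direction $e_k$ — the directions $e_k$ with $k\ne N$ are parallel to $T_\lambda$ so reflection acts as identity on those lines and the antisymmetry is not directly usable there, while only the $e_N$-line interacts with the reflection; one must check the contributions from the $k\ne N$ directions do not spoil the inequality (they don't, because $w_\lambda$ is still $\ge0$ outside $\Sigma_\lambda$ there, but this needs the a priori sign information on $\R^N\setminus\Sigma_\lambda$ to be set up consistently). (ii) Establishing the strong maximum principle and the boundary behaviour of $w_\lambda$ near $\partial B_1\cap\partial\Sigma_\lambda$ and near $T_\lambda$ in a form robust to the axis-supported kernel — this is where one leans on the regularity theory of $\mathcal{I}u=f$ cited in the introduction (Bass--Chen, Ros-Oton--Serra, Fernández-Real--Ros-Oton) to know $u\in C^2$ up to the needed order and that the line-by-line maximum principle for each $\mathcal{I}_k$ can be chained together. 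Once these nonlocal bookkeeping issues are settled, the moving-plane machinery is routine.
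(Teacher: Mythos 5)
Your overall strategy is the same as the paper's: a direct moving-plane argument in each coordinate direction for the antisymmetric function $w_{\lambda}=u_{\lambda}-u$, using that reflection across a coordinate hyperplane commutes with $\mathcal{I}$, the Lipschitz quotient $c_{\lambda}$ to linearize, a narrow-region start near $x_{i}=-1$, a continuation step combining strict positivity of $w_{\lambda_{0}}$ on compact subsets with a maximum principle on the remaining thin strip, and finally $\lambda_{0}=0$ from both sides. The exclusion of $w_{\lambda_{0}}\equiv 0$ for $\lambda_{0}<0$ via the region outside $B_{1}$ whose reflection lies inside $B_{1}$ (where $u=0$ but $u_{\lambda_{0}}>0$) is also exactly the mechanism the paper uses.

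However, the smallness mechanism you invoke is not justified for this operator, and this is the one step that would not go through as written. You claim that at a negative minimum the nonlocal term gives a gain controlled by $\int_{\Sigma_{\lambda}}w_{\lambda}^{-}$, hence by H\"older of size $C\left|\Sigma_{\lambda}\right|^{\alpha}\sup\left|w_{\lambda}\right|$, so that smallness of the \emph{measure} of $\Sigma_{\lambda}$ (and later of $\Sigma_{\lambda_{0}+\varepsilon}\setminus\Sigma_{\lambda_{0}}$) beats the Lipschitz constant. For $\mathcal{I}$ the L\'evy measure is supported on the coordinate lines through the minimum point $\bar{x}$, so no small-volume (ABP-type) maximum principle is available: only the values of $w_{\lambda}$ on the two lines $\bar{x}\pm te_{k}$ enter, and a set of tiny Lebesgue measure can still fail to produce any gain. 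The correct quantitative input is width-based, exactly the paper's Lemma \ref{L2-7}: splitting the $e_{1}$-integral at $T_{\lambda}$ and using antisymmetry gives
\begin{align*}
-\mathcal{I}w_{\lambda}\left(\bar{x}\right)\leq 2w_{\lambda}\left(\bar{x}\right)\int_{\lambda}^{+\infty}\frac{dt}{\left|t-\bar{x}_{1}\right|^{1+2s}}\leq\frac{w_{\lambda}\left(\bar{x}\right)}{s\,d^{2s}},
\end{align*}
where $d$ is the width of the region in the $e_{1}$-direction; this dominates any bounded $c_{\lambda}$ once $d$ is small, with no sign restriction on $c_{\lambda}$. (In particular your intermediate claim that $\mathcal{I}w_{\lambda}\left(\bar{x}\right)\geq 0$ at the minimum is not correct, since $\bar{x}$ minimizes $w_{\lambda}$ only over $\Sigma_{\lambda}$; the reflected-tail inequality above is what replaces it. Likewise ``$w_{\lambda}\geq 0$ on $\R^{N}\setminus\Sigma_{\lambda}$'' cannot hold by antisymmetry; the exterior condition you need is $w_{\lambda}\geq 0$ on $\Sigma_{\lambda}\setminus\left(B_{1}\cap\Sigma_{\lambda}\right)$.) Your argument is repaired, with no change of structure, by replacing the measure-based gain with this width-based narrow-region principle: the starting cap near $x_{1}=-1$ and the continuation strip are both thin in the $e_{1}$-direction, which is precisely how the paper closes the proof.
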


The paper is organized as follows. 
\begin{itemize}
\item In Section \ref{S2} we present some knowledge of the fractional operator $\left(-\partial_{ii}\right)^{s}$. 
\item In Section \ref{S3} we deal with the problems in the whole space involving Theorem \ref{T1-1}-\ref{T1-4}. \item In Section \ref{S4} we prove Theorem \ref{T1-6} and Theorem \ref{T1-7} in the half space. 
\item In Section \ref{S5} we explore symmetry in the unit ball by proving Theorem \ref{T1-8}.
\end{itemize}

\section{Preliminary}\label{S2}
In this section, we present a substantial amount of preliminary knowledge of the fractional operator $\left(-\partial_{ii}\right)^{s}$ relevant to the paper. We denote $C_{\ast}$ or $C\left({\ast}\right)$ as positive constants that are contingent upon the variable $\ast$. We define $B_{R}\left(x\right)$ as the ball centered at the point $x$ with a radius of $R$ and we use $B_{R}$ to refer to this ball when the center is located at the origin.

It would be appropriate to occasionally omit the constant coefficient in the definition of $I_{i}$ such that
\begin{align*}
\mathcal{I}_{i}u\left(x\right)
=\int_{\R}\frac{u\left(x+te_{i}\right)+u\left(x-te_{i}\right)-2u\left(x\right)}{\left|t\right|^{1+2s}}dt.
\end{align*}
Furthermore, in order to facilitate the calculations, we will sometimes employ an equivalent formulation of $\mathcal{I}_{i}$, namely
\begin{align*}
\mathcal{I}_{i}u\left(x\right)=\text{P.V.}\int_{\R}\frac{u\left(x+te_{i}\right)-u\left(x\right)}{\left|t\right|^{1+2s}}dt.
\end{align*}
We first give some properties regarding the operator $\mathcal{I}$, which will be used in our proofs.
\subsection{Basic properties}
\begin{lem}\label{L2-1}
Let $i\in\mathbf{S}_{N}$ and $\varphi\in C^{2}_{0}\left(\R^{N}\right)$. For any $R>0$, we define 
\begin{align*}
\varphi_{R}\left(x\right)=\varphi\left(\frac{x}{R}\right).
\end{align*}
Then we have
\begin{align*}
\mathcal{I}_{i}\varphi_{R}\left(x\right)=R^{-2s}\mathcal{I}_{i}\varphi\left(\frac{x}{R}\right).
\end{align*}
\end{lem}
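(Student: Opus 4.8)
The plan is to verify the scaling identity by a direct change of variables in the principal value integral defining $\mathcal{I}_i$. First I would write, using the equivalent P.V.\ formulation recalled just above,
\begin{align*}
\mathcal{I}_i\varphi_R(x)=\text{P.V.}\int_{\R}\frac{\varphi_R(x+te_i)-\varphi_R(x)}{|t|^{1+2s}}\,dt
=\text{P.V.}\int_{\R}\frac{\varphi\!\left(\tfrac{x}{R}+\tfrac{t}{R}e_i\right)-\varphi\!\left(\tfrac{x}{R}\right)}{|t|^{1+2s}}\,dt,
\end{align*}
since $\varphi_R(y)=\varphi(y/R)$ and the $i$-th coordinate of $(x+te_i)/R$ is $(x_i+t)/R$ while all other coordinates are $x_j/R$. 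Then I would substitute $\tau=t/R$, so that $t=R\tau$, $dt=R\,d\tau$, and $|t|^{1+2s}=R^{1+2s}|\tau|^{1+2s}$; the symmetric truncation $\{|t|>\varepsilon\}$ defining the principal value transforms into $\{|\tau|>\varepsilon/R\}$, which is again a symmetric neighbourhood of the origin, so the P.V.\ structure is preserved. This yields
\begin{align*}
\mathcal{I}_i\varphi_R(x)=\text{P.V.}\int_{\R}\frac{\varphi\!\left(\tfrac{x}{R}+\tau e_i\right)-\varphi\!\left(\tfrac{x}{R}\right)}{R^{1+2s}|\tau|^{1+2s}}\,R\,d\tau
=R^{-2s}\,\text{P.V.}\int_{\R}\frac{\varphi\!\left(\tfrac{x}{R}+\tau e_i\right)-\varphi\!\left(\tfrac{x}{R}\right)}{|\tau|^{1+2s}}\,d\tau
=R^{-2s}\,\mathcal{I}_i\varphi\!\left(\tfrac{x}{R}\right),
\end{align*}
which is exactly the claimed identity.

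The only technical point worth a sentence is the justification that all these integrals converge (so the manipulations are legitimate and not merely formal): since $\varphi\in C^2_0(\R^N)$, a second-order Taylor expansion in the $t$-variable gives $|\varphi_R(x+te_i)+\varphi_R(x-te_i)-2\varphi_R(x)|\le C_{R,\varphi}\,t^2$ near $t=0$, which controls the integrand against $|t|^{1-2s}$ there and makes the even part absolutely integrable near the origin, while compact support handles large $|t|$; hence the P.V.\ integral exists and equals the symmetrized absolutely convergent integral, and the change of variables above is valid. I do not expect any genuine obstacle here — the statement is a pure homogeneity computation — but the one place to be careful is to perform the substitution inside the symmetric-truncation limit rather than on a (possibly only conditionally convergent) bare integral, so that the principal value is manifestly respected; alternatively one can simply work throughout with the even second-difference form $\varphi_R(x+te_i)+\varphi_R(x-te_i)-2\varphi_R(x)$, which is absolutely convergent and makes the change of variables $\tau=t/R$ entirely routine.
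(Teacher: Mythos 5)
Your proof is correct and uses essentially the same idea as the paper: a change of variables $t=Rr$ in the integral defining $\mathcal{I}_i$, with the paper working directly with the absolutely convergent second-difference form $\varphi_R(x+te_i)+\varphi_R(x-te_i)-2\varphi_R(x)$ — exactly the alternative you mention at the end. Your extra care about preserving the symmetric truncation in the P.V.\ formulation is fine but not needed once one adopts that even form.
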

\begin{proof}
For any $R>0$ and $x\in\R^{N}$, let $t=Rr$, there holds
\begin{align*}
\mathcal{I}_{i}\varphi_{R}\left(x\right)
&=\int_{\R}\frac{\varphi\left(\frac{x+te_{i}}{R}\right)+\varphi\left(\frac{x-te_{i}}{R}\right)-2\varphi\left(\frac{x}{R}\right)}{\left|t\right|^{1+2s}}dt\\
&=R^{-2s}\int_{\R}\frac{\varphi\left(\frac{x}{R}+re_{i}\right)+\varphi\left(\frac{x}{R}-re_{i}\right)-2\varphi\left(\frac{x}{R}\right)}{\left|r\right|^{1+2s}}dr\\
&=R^{-2s}\mathcal{I}_{i}\varphi\left(\frac{x}{R}\right).
\end{align*}
\end{proof}

\begin{lem}\label{L2-2}
Let $i\in\mathbf{S}_{N}$ and $m>1$. For any nonnegative function $\varphi\in C^{2}_{0}\left(\R^{N}\right)$, we have
\begin{align*} 
\mathcal{I}_{i}\varphi^{m}\geq m\varphi^{m-1}\mathcal{I}_{i}\varphi.
\end{align*}
\end{lem}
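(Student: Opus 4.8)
The plan is to exploit the convexity of the map $\tau\mapsto\tau^{m}$ for $m>1$ together with the pointwise structure of the one-dimensional kernel. Recall that $\mathcal{I}_{i}$ acts only in the $e_{i}$ direction, so for fixed $x$ we may reduce everything to a one-variable statement by writing $g(t)=\varphi(x+te_{i})$, a nonnegative $C^{2}$ function with compact support on $\R$; then $\mathcal{I}_{i}\varphi^{m}(x)=\int_{\R}\frac{g(t)^{m}+g(-t)^{m}-2g(0)^{m}}{|t|^{1+2s}}\,dt$ and similarly for $\mathcal{I}_{i}\varphi(x)$ with the $m$-th powers removed. So it suffices to prove the elementary inequality
\begin{align*}
a^{m}+b^{m}-2c^{m}\ \geq\ mc^{m-1}\left(a+b-2c\right)
\end{align*}
for all $a,b,c\ge 0$, and then integrate against the positive measure $|t|^{-1-2s}\,dt$ (noting the integrals are finite since $\varphi\in C^{2}_{0}$, using the second-order cancellation near $t=0$ and compact support near infinity).

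The key step, then, is that elementary inequality, which is just convexity: for a convex differentiable function $F$ on $[0,\infty)$ one has $F(\tau)\ge F(c)+F'(c)(\tau-c)$ for every $\tau\ge 0$. Applying this with $F(\tau)=\tau^{m}$, $F'(\tau)=m\tau^{m-1}$, at $\tau=a$ and at $\tau=b$ and adding gives
\begin{align*}
a^{m}+b^{m}\ \geq\ 2c^{m}+mc^{m-1}\left(a-c\right)+mc^{m-1}\left(b-c\right)=2c^{m}+mc^{m-1}\left(a+b-2c\right),
\end{align*}
which is exactly what is needed. (If one worries about $c=0$ when $m<1$, there is no issue here since $m>1$, so $F$ is $C^{1}$ on all of $[0,\infty)$ with $F'(0)=0$.)

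Finally I would assemble the pieces: apply the displayed pointwise inequality with $a=\varphi(x+te_{i})$, $b=\varphi(x-te_{i})$, $c=\varphi(x)$, divide by $|t|^{1+2s}$, and integrate in $t$ over $\R$; the left side becomes $\mathcal{I}_{i}\varphi^{m}(x)$, the right side becomes $m\varphi(x)^{m-1}\int_{\R}\frac{\varphi(x+te_{i})+\varphi(x-te_{i})-2\varphi(x)}{|t|^{1+2s}}\,dt=m\varphi(x)^{m-1}\mathcal{I}_{i}\varphi(x)$, since the factor $m\varphi(x)^{m-1}$ is constant in $t$. This holds for every $x\in\R^{N}$, giving the claimed inequality. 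I do not anticipate a genuine obstacle here: the only point requiring a word of care is the finiteness/absolute convergence of the integrals so that one is allowed to pull the constant $m\varphi(x)^{m-1}$ out and split the integral, and this is guaranteed by $\varphi\in C^{2}_{0}(\R^{N})$ exactly as in the well-definedness discussion for $\mathcal{I}_{i}$ on $C^{2}_{0}$; the same regularity makes both $\mathcal{I}_{i}\varphi^{m}$ and $\mathcal{I}_{i}\varphi$ well-defined since $\varphi^{m}\in C^{2}_{0}(\R^{N})$ as well.
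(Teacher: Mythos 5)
Your proof is correct and is essentially the paper's argument: the paper establishes the same pointwise bound on the integrand via Young's inequality, $\varphi^{m-1}(x)\varphi(x\pm te_{i})\leq\frac{m-1}{m}\varphi^{m}(x)+\frac{1}{m}\varphi^{m}(x\pm te_{i})$, which is exactly your tangent-line (convexity) inequality $a^{m}+b^{m}-2c^{m}\geq mc^{m-1}(a+b-2c)$ rearranged, and then integrates against the kernel just as you do.
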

\begin{proof}
By the definition of $\mathcal{I}_{i}$, we know that for any $x\in\R^{N}$, 
\begin{align*}
\varphi^{m-1}\left(x\right)\mathcal{I}_{i}\varphi\left(x\right)
=\int_{\R}\frac{\varphi^{m-1}\left(x\right)\varphi\left(x+te_{i}\right)+\varphi^{m-1}\left(x\right)\varphi\left(x-te_{i}\right)-2\varphi^{m}\left(x\right)}{\left|t\right|^{1+2s}}dt.
\end{align*}
By Young's inequality, we have
\begin{align*}
\varphi^{m-1}\left(x\right)\varphi\left(x\pm te_{i}\right)\leq\frac{m-1}{m}\varphi^{m}\left(x\right)+
\frac{1}{m}\varphi^{m}\left(x\pm te_{i}\right).
\end{align*}
Thus we get
\begin{align*}
\varphi^{m-1}\left(x\right)\mathcal{I}_{i}\varphi\left(x\right)
\leq\frac{1}{m}\int_{\R}\frac{\varphi^{m}\left(x+te_{i}\right)+\varphi^{m}\left(x-te_{i}\right)-2\varphi^{m}\left(x\right)}{\left|t\right|^{1+2s}}dt=\frac{1}{m}\mathcal{I}_{i}\varphi^{m}\left(x\right).
\end{align*}
\end{proof}

It is known by \cite{RS1,RS2} that the function $\left(x_{N}\right)_{+}^{s}$ is a solution of 
\begin{align*}
\left\lbrace 
\begin{aligned}
-Lu&=0&&\text{in }\R_{+}^{N},\\
u&=0&&\text{in }\R^N\backslash\R_{+}^{N}.
\end{aligned}
\right.
\end{align*}
The following lemma can be readily derived.

\begin{lem}\label{L2-3}
Let $0<\alpha<2s$ and $\omega_{\alpha}\left(x\right)=\left(x_{N}\right)_{+}^{\alpha}$. For any $x\in\R^{N}_{+}$, we have
\begin{align*}
\mathcal{I}\omega_{\alpha}\left(x\right)=C_{\alpha}x_{N}^{\alpha-2s},
\end{align*}
where
\begin{align*}
C_{\alpha}
\left\lbrace 
\begin{aligned}
&<0,&&0<\alpha<s,\\
&=0,&&\alpha=s,\\
&>0,&&s<\alpha<2s.
\end{aligned}
\right.
\end{align*}
\end{lem}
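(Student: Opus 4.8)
\textbf{Proof proposal for Lemma \ref{L2-3}.}

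The plan is to exploit the fact that $\mathcal{I}$ decomposes as $\sum_{i=1}^{N}\mathcal{I}_i$, and that $\omega_\alpha(x)=(x_N)_+^\alpha$ depends only on the last coordinate. For $i\in\mathbf{S}_{N-1}$ the difference $\omega_\alpha(x+te_i)+\omega_\alpha(x-te_i)-2\omega_\alpha(x)$ vanishes identically, since moving in a direction $e_i$ with $i<N$ does not change the value of $(x_N)_+^\alpha$. Hence $\mathcal{I}\omega_\alpha(x)=\mathcal{I}_N\omega_\alpha(x)$, and the problem reduces to a genuinely one-dimensional computation: for $x\in\R^N_+$ (so $x_N>0$),
\begin{align*}
\mathcal{I}_N\omega_\alpha(x)=\int_{\R}\frac{(x_N+t)_+^\alpha+(x_N-t)_+^\alpha-2x_N^\alpha}{|t|^{1+2s}}\,dt.
\end{align*}
First I would substitute $t=x_N r$ to pull out the scaling factor: this gives $\mathcal{I}_N\omega_\alpha(x)=x_N^{\alpha-2s}\,C_\alpha$ with
\begin{align*}
C_\alpha=\int_{\R}\frac{(1+r)_+^\alpha+(1-r)_+^\alpha-2}{|r|^{1+2s}}\,dr,
\end{align*}
which establishes the claimed homogeneity $\mathcal{I}\omega_\alpha(x)=C_\alpha x_N^{\alpha-2s}$ immediately, and reduces everything to determining the sign of the single constant $C_\alpha$ as a function of $\alpha\in(0,2s)$.

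For the sign, I would note that $C_\alpha$ is (up to the normalizing constant $C_s$) exactly the value of the one-dimensional fractional operator $(-d^2/dx^2)^s$ applied to $(x)_+^\alpha$ at the point $x=1$, which is classically known: the function $(x_N)_+^s$ is $s$-harmonic in the half-line (this is the one-dimensional shadow of the fact quoted just before the lemma, that $(x_N)_+^s$ solves $-Lu=0$ in $\R^N_+$), so $C_s=0$. To get the strict sign for $\alpha\neq s$, I would show that $\alpha\mapsto C_\alpha$ is strictly monotone: differentiating under the integral sign in $\alpha$,
\begin{align*}
\frac{d}{d\alpha}C_\alpha=\int_{\R}\frac{(1+r)_+^\alpha\log(1+r)_++(1-r)_+^\alpha\log(1-r)_+}{|r|^{1+2s}}\,dr,
\end{align*}
and the integrand, which is supported on $r\in(-1,1)$, can be symmetrized in $r\mapsto -r$ to $\big[(1+r)^\alpha\log(1+r)+(1-r)^\alpha\log(1-r)\big]/|r|^{1+2s}$ on $(0,1)$; one checks this bracket is strictly negative there (e.g. the function $y\mapsto y^\alpha\log y$ is strictly convex-dominated in the relevant sense, or compare the two terms directly since $1+r>1>1-r$ forces the negative term to dominate near $r\to1^-$ and a convexity/rearrangement estimate handles the rest). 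Thus $C_\alpha$ is strictly decreasing in $\alpha$, and combined with $C_s=0$ we get $C_\alpha>0$ for $\alpha<s$ and $C_\alpha<0$ for $\alpha>s$ — wait, this is the opposite of the stated sign, so in fact the careful bookkeeping must give $\frac{d}{d\alpha}C_\alpha>0$; I would recheck the sign of the bracket (the point $r\to 1^-$ gives $(1-r)^\alpha\log(1-r)\to 0^-$ while $(1+r)^\alpha\log(1+r)\to 2^\alpha\log 2>0$, so the bracket is in fact positive near $r=1$), yielding $C_\alpha$ strictly increasing, hence $C_\alpha<0$ for $0<\alpha<s$ and $C_\alpha>0$ for $s<\alpha<2s$ as claimed.

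The main obstacle is the sign analysis of $C_\alpha$ — both verifying $C_s=0$ rigorously (either by a direct evaluation via the Beta-function integral $\int_0^1 r^{-1-2s}\big[(1+r)^s+(1-r)^s-2\big]dr$, or, more cleanly, by invoking the cited result of Ros-Oton--Serra that $(x_N)_+^s$ is $L$-harmonic in $\R^N_+$ together with the reduction $\mathcal{I}=\mathcal{I}_N$ on functions of $x_N$) and establishing strict monotonicity in $\alpha$ with the correct sign. The scaling reduction and the vanishing of the $\mathcal{I}_i$ for $i<N$ are routine. I would present the monotonicity argument carefully, splitting the $r$-integral near $r=0$ (where the integrand is $O(r^{1-2s})$ after the cancellation, hence integrable) and near $r=1$, and using the pointwise positivity of the symmetrized integrand of $\frac{d}{d\alpha}C_\alpha$ on $(0,1)$; alternatively, if a slicker route is available, one may compute $C_\alpha$ in closed form in terms of $\Gamma$-functions and read off the sign from the poles of $\Gamma$, which automatically produces the trichotomy at $\alpha=s$.
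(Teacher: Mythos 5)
Your reduction is exactly the paper's: you observe $\mathcal{I}_i\omega_\alpha\equiv 0$ for $i\in\mathbf{S}_{N-1}$, substitute $t=x_N r$ in the $\mathcal{I}_N$ integral, and obtain $\mathcal{I}\omega_\alpha(x)=C_\alpha x_N^{\alpha-2s}$ with $C_\alpha=\int_{\R}\frac{(1+r)_+^\alpha+(1-r)_+^\alpha-2}{|r|^{1+2s}}dr$. The paper stops there and simply invokes the sign trichotomy of $C_\alpha$ as a known fact (it is the classical one-dimensional computation behind the barrier $(x_N)_+^\alpha$, with $C_s=0$ coming from the $s$-harmonicity of $(x_N)_+^s$ quoted before the lemma). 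You instead try to prove the sign, via $C_s=0$ plus strict monotonicity of $\alpha\mapsto C_\alpha$, and this is where your argument has a genuine gap.

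Two problems with the monotonicity step. First, the differentiated integrand is not supported on $(-1,1)$: for $r>1$ the term $(1+r)^\alpha\log(1+r)$ survives (this error is harmless, since that contribution is positive). Second, and decisively, the symmetrized bracket $(1+r)^\alpha\log(1+r)+(1-r)^\alpha\log(1-r)$ is \emph{not} pointwise positive on $(0,1)$ in general: expanding at $r=0$ gives
\begin{align*}
(1+r)^\alpha\log(1+r)+(1-r)^\alpha\log(1-r)=(2\alpha-1)\,r^{2}+O\!\left(r^{3}\right),
\end{align*}
which is negative near $r=0$ whenever $\alpha<\tfrac12$ — a case that certainly occurs since $\alpha$ ranges over $(0,2s)$ with $s$ possibly small. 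So the pointwise-positivity argument for $\frac{d}{d\alpha}C_\alpha>0$ fails as stated; your own back-and-forth about the sign near $r=1$ versus the claimed negativity is a symptom of this. To close the gap you would need a genuinely different justification of the sign — e.g.\ the closed-form evaluation of $C_\alpha$ in terms of Gamma functions and a sine factor (Dyda-type formula), or the standard sub/supersolution comparison of $(x_N)_+^\alpha$ with $(x_N)_+^s$ — rather than monotonicity via a pointwise sign of the $\alpha$-derivative. Since the paper treats this sign as a quoted fact, your proof of the homogeneity identity is complete and identical to the paper's, but the portion you added to justify the sign of $C_\alpha$ does not go through as written.
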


\begin{proof}
For any $x\in\R^{N}_{+}$, when $i\in\mathbf{S}_{N-1}$,
\begin{align*}
\mathcal{I}_{i}\omega_{\alpha}\left(x\right)
&=\int_{\R}\frac{x_{N}^{\alpha}+x_{N}^{\alpha}-2x_{N}^{\alpha}}{\left|t\right|^{1+2s}}dt=0.
\end{align*}
When $i=N$, let $t=x_{N}r$, we get
\begin{align*}
\mathcal{I}_{N}\omega_{\alpha}\left(x\right)
&=\int_{\R}\frac{\left(x_{N}+t\right)_{+}^{\alpha}+\left(x_{N}-t\right)_{+}^{\alpha}-2x_{N}^{\alpha}}{\left|t\right|^{1+2s}}dt\\
&=x_{N}^{\alpha-2s}\int_{\R}\frac{\left(1+r\right)_{+}^{\alpha}+\left(1-r\right)_{+}^{\alpha}-2}{r^{1+2s}}dr.
\end{align*}
Therefore
\begin{align*}
\mathcal{I}\omega_{\alpha}\left(x\right)=x_{N}^{\alpha-2s}\int_{\R}\frac{\left(1+r\right)_{+}^{\alpha}+\left(1-r\right)_{+}^{\alpha}-2}{r^{1+2s}}dr.
\end{align*}
The desire result follows by the fact
\begin{align*}
C_{\alpha}=:\int_{\R}\frac{\left(1+r\right)_{+}^{\alpha}+\left(1-r\right)_{+}^{\alpha}-2}{r^{1+2s}}dr
\left\lbrace 
\begin{aligned}
&<0,&&0<\alpha<s,\\
&=0,&&\alpha=s,\\
&>0,&&s<\alpha<2s.
\end{aligned}
\right.
\end{align*}
\end{proof}

\begin{lem}\label{L2-4}
Let $f,g,h\in C^{2}\left(\R_{+}^{N}\right)\cap\mathcal{L}_{s}$ and $f=gh$. For any $x\in\R_{+}^{N}$, we have
\begin{align*}
\mathcal{I}f
=g\mathcal{I}h+h\mathcal{I}g+I\left[g,h\right],
\end{align*}
where
\begin{align*}
I\left[g,h\right]=\sum_{i=1}^{N}I_{i}\left[g,h\right]
\end{align*}
with
\begin{align*}
I_{i}\left[g,h\right]\left(x\right)=&\int_{\R}\frac{\left[g\left(x+te_{i}\right)-g\left(x\right)\right]\left[h\left(x+te_{i}\right)-h\left(x\right)\right]}{\left|t\right|^{1+2s}}\\
&+\frac{\left[g\left(x-te_{i}\right)-g\left(x\right)\right]\left[h\left(x-te_{i}\right)-h\left(x\right)\right]}{\left|t\right|^{1+2s}}dt.
\end{align*}
\end{lem}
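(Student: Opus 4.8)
The plan is to reduce the claim to a single pointwise algebraic identity for second differences, applied in each coordinate direction, followed by an integration in $t$.

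First I would fix $i\in\mathbf{S}_{N}$ and $x\in\R_{+}^{N}$ and record the elementary identity obtained by writing, for each choice of sign, $g(x\pm te_{i})h(x\pm te_{i})-g(x)h(x)=[g(x\pm te_{i})-g(x)]h(x)+g(x)[h(x\pm te_{i})-h(x)]+[g(x\pm te_{i})-g(x)][h(x\pm te_{i})-h(x)]$ and summing the $+t$ and $-t$ versions:
\begin{align*}
&f(x+te_{i})+f(x-te_{i})-2f(x)\\
&=h(x)\left[g(x+te_{i})+g(x-te_{i})-2g(x)\right]+g(x)\left[h(x+te_{i})+h(x-te_{i})-2h(x)\right]\\
&\quad+\left[g(x+te_{i})-g(x)\right]\left[h(x+te_{i})-h(x)\right]+\left[g(x-te_{i})-g(x)\right]\left[h(x-te_{i})-h(x)\right].
\end{align*}
This is verified by expanding both sides and comparing terms.

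Next I would divide by $|t|^{1+2s}$ and integrate in $t$. To keep the bookkeeping clean at both ends I would first integrate over $\{|t|\le R\}$: near $t=0$ every term on the right-hand side is $O(t^{2})$ by the $C^{2}$ regularity of $f,g,h$, so on a bounded interval all four integrals converge absolutely and the splitting is legitimate. Letting $R\to\infty$, the left-hand side converges to $\mathcal{I}_{i}f(x)$, while the first two terms converge to $h(x)\mathcal{I}_{i}g(x)$ and $g(x)\mathcal{I}_{i}h(x)$; here I would use that for any $u\in C^{2}(\R_{+}^{N})\cap\mathcal{L}_{s}$ the second difference $u(x+te_{i})+u(x-te_{i})-2u(x)$ is $O(1+|t|^{2s-\delta})$ as $|t|\to\infty$ thanks to the growth bound defining $\mathcal{L}_{s}$, so that $\mathcal{I}_{i}u(x)$ is in fact an absolutely convergent integral. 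Hence the remaining pair of cross integrals also converges (as a limit of $\int_{\{|t|\le R\}}$), and its value is precisely $I_{i}[g,h](x)$; this yields
\begin{align*}
\mathcal{I}_{i}f=g\,\mathcal{I}_{i}h+h\,\mathcal{I}_{i}g+I_{i}[g,h]\quad\text{in }\R_{+}^{N}.
\end{align*}

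Finally I would sum this identity over $i=1,\dots,N$ and use $\mathcal{I}=\sum_{i=1}^{N}\mathcal{I}_{i}$ together with $I[g,h]=\sum_{i=1}^{N}I_{i}[g,h]$ to obtain $\mathcal{I}f=g\,\mathcal{I}h+h\,\mathcal{I}g+I[g,h]$. I expect the only genuinely delicate point to be the behaviour at $t=\infty$: the two cross integrals need not be separately \emph{absolutely} convergent for functions satisfying only the $\mathcal{L}_{s}$ growth, so it is cleanest to keep them grouped exactly as in the definition of $I_{i}[g,h]$ and to extract their joint convergence from the already-established finiteness of $\mathcal{I}_{i}f$, $g\,\mathcal{I}_{i}h$ and $h\,\mathcal{I}_{i}g$, rather than estimating them directly; near the origin there is nothing to worry about since every piece is $O(t^{2})$.
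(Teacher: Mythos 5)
Your proof is correct and follows essentially the same route as the paper: the same second-difference product identity for $gh$, substituted into the definition of $\mathcal{I}_{i}$ and summed over $i$. The extra care you take with convergence near $t=0$ and at infinity (keeping the cross terms grouped as in $I_{i}[g,h]$) is sound and simply makes explicit what the paper leaves implicit.
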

\begin{proof}
For any $i\in\mathbf{S}_{N}$ and $\left(x,t\right)\in\R_{+}^{N}\times\R$,
\begin{align*}
&g\left(x+te_{i}\right)h\left(x+te_{i}\right)+g\left(x-te_{i}\right)h\left(x-te_{i}\right)-2g\left(x\right)h\left(x\right)\\
&=\left[g\left(x+te_{i}\right)+g\left(x-te_{i}\right)-2g\left(x\right)\right]h\left(x\right)+\left[h\left(x+te_{i}\right)+h\left(x-te_{i}\right)-2h\left(x\right)\right]g\left(x\right)\\
&\quad+\left[g\left(x+te_{i}\right)-g\left(x\right)\right]\left[h\left(x+te_{i}\right)-h\left(x\right)\right]+\left[g\left(x-te_{i}\right)-g\left(x\right)\right]\left[h\left(x-te_{i}\right)-h\left(x\right)\right].
\end{align*}
The result is derived by substituting the above identity into the definition of $\mathcal{I}_{i}f$.
\end{proof}

We present the continuity property of $I$ as outlined in Lemma \ref{L2-4}.

\begin{lem}\label{L2-5}
Let $\left(2s-1\right)_+<\alpha<\min\left\{1,2s\right\}$, $\varphi\in C^{1}_{0}\left(\R^{N}\right)$ and $\omega_{\alpha}\left(x\right)=\left(x_{N}\right)_{+}^{\alpha}$. For any $x\in\overline{\R_{+}^{N}}$, we have
\begin{align*}
I\left[\omega_{\alpha},\varphi\right]
=I_{N}\left[\omega_{\alpha},\varphi\right].
\end{align*}
Moreover, for any $\left\{x_{n}\right\}\subset\R_{+}^{N}$ and $x_{n}\rightarrow x_{0}\in\overline{\R_{+}^{N}}$ as $n\rightarrow+\infty$, we have
\begin{align*}
\lim_{n\rightarrow+\infty}I_{N}\left[\omega_{\alpha},\varphi\right]\left(x_{n}\right)
=I_{N}\left[\omega_{\alpha},\varphi\right]\left(x_{0}\right).
\end{align*}
\end{lem}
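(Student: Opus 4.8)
The plan is to prove the two assertions in turn. For the first identity $I[\omega_\alpha,\varphi]=I_N[\omega_\alpha,\varphi]$, I would observe that $I[\omega_\alpha,\varphi]=\sum_{i=1}^N I_i[\omega_\alpha,\varphi]$, and that for each $i\in\mathbf{S}_{N-1}$ the increments $\omega_\alpha(x\pm te_i)-\omega_\alpha(x)$ vanish identically: since $\omega_\alpha(x)=(x_N)_+^\alpha$ depends only on the $N$-th coordinate, moving $x$ in a direction $e_i$ with $i\neq N$ does not change the value of $\omega_\alpha$. Hence $I_i[\omega_\alpha,\varphi]\equiv 0$ for $i\in\mathbf{S}_{N-1}$ and only the $i=N$ term survives; this holds for every $x\in\overline{\R_+^N}$, in particular requiring no regularity of $\omega_\alpha$ beyond its explicit form. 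One should also check that $I_N[\omega_\alpha,\varphi](x)$ is finite for $x\in\overline{\R_+^N}$: near $t=0$ the integrand is controlled using $|\varphi(x+te_N)-\varphi(x)|\le C|t|$ (as $\varphi\in C^1_0$) and $|\omega_\alpha(x+te_N)-\omega_\alpha(x)|\le C|t|^\alpha$ (Hölder bound valid uniformly, including at the boundary $x_N=0$), so the integrand is $O(|t|^{1+\alpha}/|t|^{1+2s})=O(|t|^{\alpha-2s})$, integrable near $0$ precisely because $\alpha>2s-1$ together with $\alpha>0$ gives $\alpha-2s>-1$; wait — more carefully, $\alpha - 2s > -1$ is exactly $\alpha > 2s-1$, which is guaranteed since $\alpha>(2s-1)_+$. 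For large $|t|$, compact support of $\varphi$ forces $\varphi(x+te_N)-\varphi(x)$ to be bounded and eventually constant in sign behavior, while $|t|^{-1-2s}$ is integrable at infinity; and $\omega_\alpha$ grows only polynomially, so after pairing with the compactly supported factor the tail is integrable. So $I_N[\omega_\alpha,\varphi](x)$ is a well-defined finite number for each $x\in\overline{\R_+^N}$.

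For the continuity assertion, let $x_n\to x_0$ in $\overline{\R_+^N}$ with $x_n\in\R_+^N$. I would write
\begin{align*}
I_N[\omega_\alpha,\varphi](x_n)=\int_{\R}\frac{\big[\omega_\alpha(x_n+te_N)-\omega_\alpha(x_n)\big]\big[\varphi(x_n+te_N)-\varphi(x_n)\big]+\big[\omega_\alpha(x_n-te_N)-\omega_\alpha(x_n)\big]\big[\varphi(x_n-te_N)-\varphi(x_n)\big]}{|t|^{1+2s}}dt
\end{align*}
and apply the dominated convergence theorem in the variable $t$. Pointwise convergence of the integrand is immediate from continuity of $\omega_\alpha$ and $\varphi$. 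For the dominating function I would split $\R=\{|t|\le 1\}\cup\{|t|>1\}$. On $\{|t|\le 1\}$, use the uniform bounds $|\varphi(y+te_N)-\varphi(y)|\le \|\nabla\varphi\|_\infty|t|$ and the elementary inequality $|(a+t)_+^\alpha+(a-t)_+^\alpha-2a_+^\alpha|\le C_\alpha|t|^\alpha$ valid for all $a\ge 0$ and $0<\alpha\le 1$ (this is where $\alpha<1$ and the concavity/Hölder structure enter), to dominate by $C|t|^{\alpha-2s}$, integrable near $0$ since $\alpha-2s>-1$. On $\{|t|>1\}$, since all $x_n$ lie in a bounded set, the functions $t\mapsto\varphi(x_n\pm te_N)-\varphi(x_n)$ are supported in a fixed bounded $t$-interval and bounded by $2\|\varphi\|_\infty$, while $|\omega_\alpha(x_n\pm te_N)-\omega_\alpha(x_n)|\le C(1+|t|^\alpha)$ uniformly in $n$; hence the integrand there is dominated by $C(1+|t|^\alpha)|t|^{-1-2s}\mathbf{1}_{\{|t|\le M\}}$ for a fixed $M$, which is integrable. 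Dominated convergence then yields $I_N[\omega_\alpha,\varphi](x_n)\to I_N[\omega_\alpha,\varphi](x_0)$.

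The main technical obstacle is handling the boundary case $x_0\in\partial\R_+^N$, i.e.\ $(x_0)_N=0$, where $\omega_\alpha$ fails to be $C^1$ (it is only $C^{0,\alpha}$ there). The point is that the bound $|(a+t)_+^\alpha+(a-t)_+^\alpha-2a_+^\alpha|\le C_\alpha|t|^\alpha$ must be \emph{uniform in $a\ge 0$}, including $a=0$; this is the one genuinely non-routine estimate, and it is precisely the reason the hypothesis $\alpha<\min\{1,2s\}$ appears — $\alpha<1$ makes $a\mapsto a_+^\alpha$ subadditive with the clean Hölder modulus, and $\alpha<2s$ ensures the resulting $|t|^{\alpha-2s}$ is still integrable near the origin (combined with $\alpha>2s-1$ in the opposite direction). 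Once that uniform second-difference bound is in hand, the dominating function is independent of $n$ and dominated convergence closes the argument without any special treatment of whether $x_0$ is interior or on the boundary. I would therefore isolate this inequality as the crux of the proof and verify it by a short direct computation (e.g.\ distinguishing $|t|\le a$ from $|t|>a$ and using $|b^\alpha-c^\alpha|\le|b-c|^\alpha$ for $b,c\ge0$, $0<\alpha\le1$).
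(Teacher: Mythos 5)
Your proposal follows essentially the same route as the paper: the terms $I_i[\omega_\alpha,\varphi]$ with $i\in\mathbf{S}_{N-1}$ vanish because $\omega_\alpha$ depends only on $x_N$, and the continuity at $x_0\in\overline{\R_+^N}$ (including the boundary) is obtained by dominated convergence, bounding the $\omega_\alpha$-increment by $|t|^\alpha$ via $|b^\alpha-c^\alpha|\le|b-c|^\alpha$ and the $\varphi$-increment by $C\min\{|t|,1\}$, with $2s-1<\alpha<2s$ giving integrability of the dominating function. Two small corrections to your write-up: first, on $\{|t|>1\}$ the map $t\mapsto\varphi(x_n\pm te_N)-\varphi(x_n)$ is \emph{not} supported in a bounded $t$-interval (the constant term $-\varphi(x_n)$ persists for all $t$), so your dominating function with the cutoff $\mathbf{1}_{\{|t|\le M\}}$ does not actually dominate; the correct tail bound is $C|t|^{\alpha-1-2s}$, integrable at infinity precisely because $\alpha<2s$ — this, not the behaviour near $t=0$, is where the hypothesis $\alpha<2s$ enters (near $0$ one only needs $\alpha>2s-1$), which is how the paper uses it. Second, since $I_N$ is a sum of \emph{products of first differences} with distinct $\varphi$-factors, the relevant crux estimate is the first-difference bound $|(a\pm t)_+^\alpha-a_+^\alpha|\le|t|^\alpha$ applied to each term, rather than the second-difference inequality you highlight; your own remark on $|b^\alpha-c^\alpha|\le|b-c|^\alpha$ already supplies it, so the fix is immediate.
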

\begin{proof}
For any $x\in\overline{\R_{+}^{N}}$, by the definition of $I_{i}$ in Lemma \ref{L2-4}, when $i\in\mathbf{S}_{N-1}$,
\begin{align*}
I_{i}\left[\omega_{\alpha},\varphi\right]\left(x\right)
=&\int_{\R}\frac{\left[x_{N}^{\alpha}-x_{N}^{\alpha}\right]\left[\varphi\left(x+te_{i}\right)-\varphi\left(x\right)\right]+\left[x_{N}^{\alpha}-x_{N}^{\alpha}\right]\left[\varphi\left(x-te_{i}\right)-\varphi\left(x\right)\right]}{\left|t\right|^{1+2s}}dt=0.
\end{align*}
Hence we have
\begin{align*}
I\left[\omega_{\alpha},\varphi\right]\left(x\right)=I_{N}\left[\omega_{\alpha},\varphi\right]\left(x\right),\quad\forall x\in\overline{\R_{+}^{N}}.
\end{align*}

For any convergent sequence $\left\{x_{n}\right\}\subset\R_{+}^{N}$, we define
\begin{align}\label{E2-1}
\begin{split}
I_{N}\left[\omega_{\alpha},\varphi\right]\left(x_{n}\right)
&=\int_{\R}\frac{\left[\left(\left(x_{n}\right)_{N}+t\right)_{+}^{\alpha}-\left(x_{n}\right)_{N}^{\alpha}\right]\left[\varphi\left(x_{n}+te_{N}\right)-\varphi\left(x_{n}\right)\right]}{\left|t\right|^{1+2s}}\\
&\quad+\frac{\left[\left(\left(x_{n}\right)_{N}-t\right)_{+}^{\alpha}-\left(x_{n}\right)_{N}^{\alpha}\right]\left[\varphi\left(x_{n}-te_{N}\right)-\varphi\left(x_{n}\right)\right]}{\left|t\right|^{1+2s}}dt\\
&=:\int_{\R}H\left(x_{n},t\right)dt.
\end{split}
\end{align}
Since $0<\alpha<1$, for any $t\in\R$,
\begin{align*}
\left|\left(\left(x_{n}\right)_{N}\pm t\right)_{+}^{\alpha}-\left(x_{n}\right)_{N}^{\alpha}\right|\leq \left|t\right|^{\alpha}.
\end{align*}
On the other hand,
\begin{align*}
\left|\varphi\left(x_{n}\pm te_{N}\right)-\varphi\left(x_{n}\right)\right|\leq C\left(\left\|\varphi\right\|_{C^{1}\left(\R^{N}\right)}\right)
\times
\left\lbrace 
\begin{aligned}
&\left|t\right|,&&\left|t\right|<1,\\
&1,&&\left|t\right|\geq1.
\end{aligned}
\right.
\end{align*}
Hence we get by \eqref{E2-1} that
\begin{align*}
\left|H\left(x_{n},t\right)\right|
\leq H\left(t\right)=C\left(\left\|\varphi\right\|_{C^{1}\left(\R^{N}\right)}\right)\times
\left\lbrace 
\begin{aligned}
&\frac{1}{\left|t\right|^{2s-\alpha}},&&\left|t\right|<1,\\
&\frac{1}{\left|t\right|^{1+2s-\alpha}},&&\left|t\right|\geq1.
\end{aligned}
\right.
\end{align*}
This inequality, together with the assumption $2s-1<\alpha<2s$, implies that $H\left(t\right)\in L^{1}\left(\R\right)$. Consequently, Lebesgue's dominated convergence theorem, along with \eqref{E2-1}, shows that 
\begin{align*}
\lim_{n\rightarrow+\infty}I_{N}\left[\omega_{\alpha},\varphi\right]\left(x_{n}\right)
=\lim_{n\rightarrow+\infty}\int_{\R}H\left(x_{n},t\right)dt
=\int_{\R}H\left(x_{0},t\right)dt
=I_{N}\left[\omega_{\alpha},\varphi\right]\left(x_{0}\right).
\end{align*}
\end{proof}

\subsection{Maximum principle }
For any $x\in\R^{N}$, we define a diffusion domain along the coordinate axis directions by
\begin{align*}
A\left(x\right):=\left\{y\in\R^{N}:y=x\pm te_{i},\forall t\in\R,\forall i\in\mathbf{S}_{N}\right\}.
\end{align*}
Let $\Omega\subseteq\R^{N}$ be an open domain, we define 
\begin{align*}
A\left(\Omega\right)=\bigcup_{x\in\Omega}A\left(x\right).
\end{align*} 

\begin{figure}[htbp]
\centering
\includegraphics[width=0.35\textwidth]{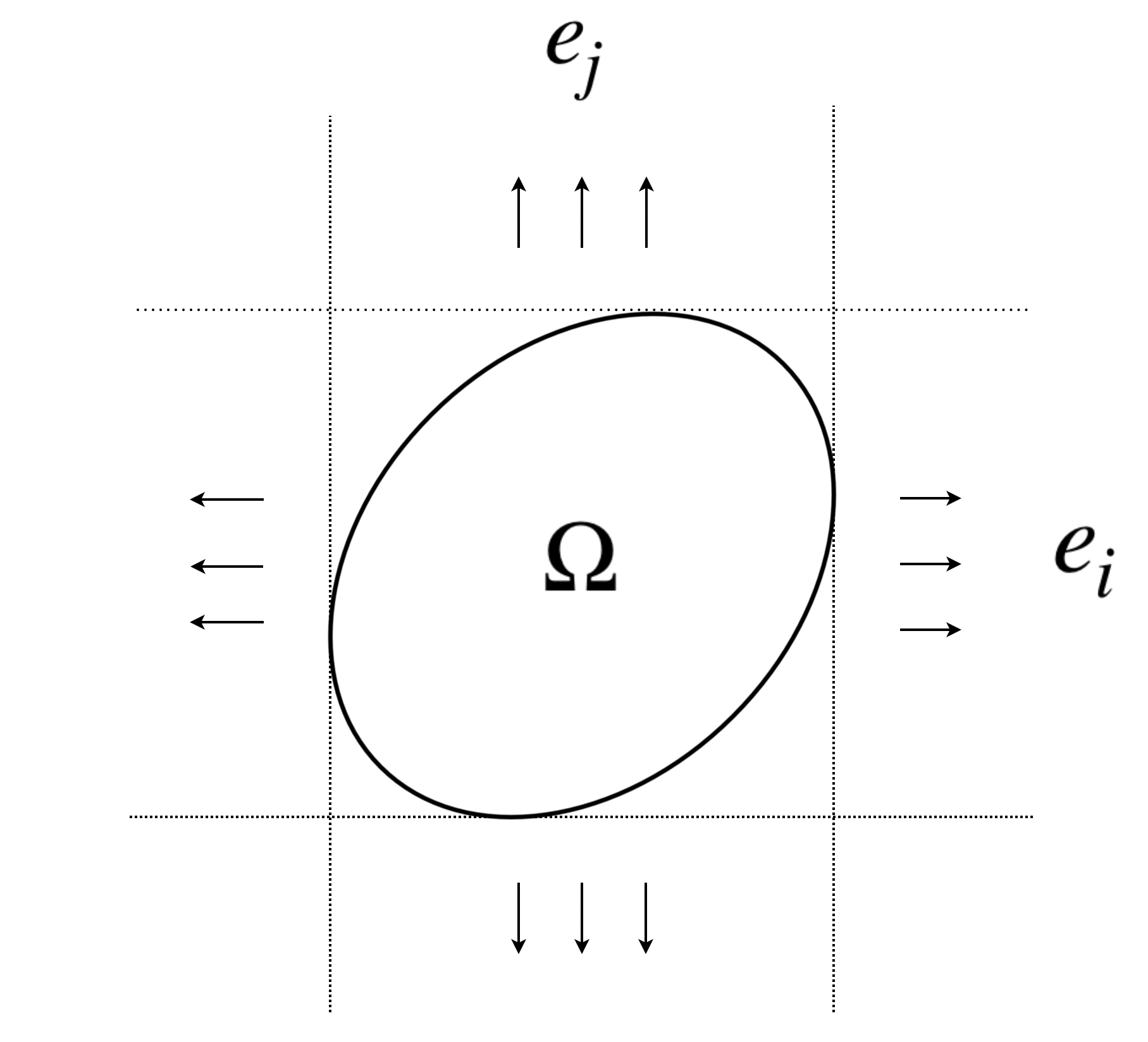}
\caption{Diffusion along the coordinate axis directions}
\end{figure}

The following lemma describes a maximum principle for $\mathcal{I}$. 
\begin{lem}\label{L2-6}
Let $\Omega\subseteq\R^{N}$ be an open domain. Assume $u\in C^{2}\left(\Omega\right)\cap\mathcal{L}_{s}$ is a solution of 
\begin{align}\label{E2-2}
\left\lbrace 
\begin{aligned}
-\mathcal{I}u\left(x\right)&\geq0,&&x\in\Omega,\\
u\left(x\right)&\geq0,&&x\in\R^{N}\backslash\Omega
\end{aligned}
\right.
\end{align}
and $u$ is lower semi-continuous on $\overline{\Omega}$. 
\begin{itemize}
\item[1.] If $\Omega$ is bounded, then $u\geq0$ in $\Omega$.
\item[2.] If $\Omega$ is unbounded and $\liminf_{\left|x\right|\rightarrow+\infty}u\geq0$, then $u\geq0$ in $\Omega$.
\end{itemize}
If moreover there exists $x_{0}$ such that $u\left(x_{0}\right)=0$ in $\Omega$, then $u=0$ a.e. in $A\left(\Omega\right)$.
\end{lem}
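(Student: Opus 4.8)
The plan is to prove the maximum principle in Lemma \ref{L2-6} by a combination of the standard contradiction argument at an interior minimum together with the nonlocal structure of $\mathcal{I}$, which forces rigidity along the diffusion set $A(\Omega)$.

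First I would handle the case when $\Omega$ is bounded. Suppose by contradiction that $\inf_{\overline{\Omega}} u < 0$. Since $u$ is lower semi-continuous on the compact set $\overline{\Omega}$ and $u \geq 0$ on $\R^N \setminus \Omega$, the infimum is attained at some $x_0 \in \Omega$ with $u(x_0) < 0$. At an interior minimum point, for each $i \in \mathbf{S}_N$ the one-dimensional slice $t \mapsto u(x_0 + t e_i)$ has a global minimum at $t=0$, so $u(x_0 + t e_i) + u(x_0 - t e_i) - 2u(x_0) \geq 0$ for all $t$, hence $\mathcal{I}_i u(x_0) \geq 0$; summing, $\mathcal{I}u(x_0) \geq 0$. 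Combined with $-\mathcal{I}u(x_0) \geq 0$ from \eqref{E2-2}, we get $\mathcal{I}u(x_0) = 0$, which forces $u(x_0 + t e_i) = u(x_0)$ for a.e.\ $t \in \R$ and every $i$ (here one must note that $u \in C^2(\Omega) \cap \mathcal{L}_s$ makes each integral $\mathcal{I}_i u(x_0)$ well-defined and the integrand nonnegative, so vanishing of the integral forces vanishing of the integrand a.e.). But $u(x_0 + t e_i) = u(x_0) < 0$ for a.e.\ $t$ contradicts $u \geq 0$ on $\R^N \setminus \Omega$ as soon as the line $x_0 + \R e_i$ exits the bounded set $\Omega$, which it must. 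This gives $u \geq 0$ in $\Omega$.

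Next, for the unbounded case I would use the extra hypothesis $\liminf_{|x|\to+\infty} u \geq 0$ to recover a global minimum: given $\varepsilon > 0$, the set where $u \leq -\varepsilon$ is contained in some bounded set, so $u + \varepsilon$ is bounded below and, if negative somewhere, attains its negative infimum at an interior point $x_0 \in \Omega$; the same slice argument applies verbatim and produces the same contradiction, then let $\varepsilon \to 0$. (Alternatively one can argue directly that $\inf u$, if negative, is attained at an interior point.) Finally, for the rigidity statement: if $u(x_0) = 0$ at some $x_0 \in \Omega$, then by the first two parts $u \geq 0$ everywhere, so $x_0$ is a global minimum of $u$; repeating the slice computation at $x_0$ gives $\mathcal{I}_i u(x_0) = 0$ for every $i$ with nonnegative integrand, hence $u \equiv u(x_0) = 0$ a.e.\ on each line $x_0 + \R e_i$, i.e.\ $u = 0$ a.e.\ on $A(x_0)$. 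To propagate to all of $A(\Omega)$, note that $A(\Omega) = \bigcup_{x\in\Omega} A(x)$, and for any $x \in \Omega$ one can find a point of $A(x_0) \cap \Omega$ on which $u$ vanishes (since $u = 0$ a.e.\ on $A(x_0)$ and $u$ is continuous in $\Omega$, $u$ vanishes on the relatively closed set $A(x_0)\cap\Omega$, in particular at points connectible to $x$ through chains of coordinate lines); iterating the rigidity at each such vanishing point covers all of $A(\Omega)$.

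The main obstacle I anticipate is the propagation step for the rigidity conclusion: one must argue carefully that vanishing of $u$ at $x_0$ forces vanishing a.e.\ on the full orbit $A(\Omega)$ and not merely on $A(x_0)$. The cleanest way is to observe that the set $Z = \{x \in \Omega : u(x) = 0\}$ is relatively closed in $\Omega$ (by continuity) and, by the argument above, has the property that whenever $x \in Z$ then a.e.\ point of each line $x + \R e_i$ lying in $\Omega$ is again in $Z$; combined with continuity this makes $Z \cap \Omega$ closed under the equivalence relation generated by coordinate-line segments inside $\Omega$, and since $u \geq 0$ with $-\mathcal{I}u \geq 0$ one upgrades a.e.\ vanishing on a line to vanishing at every interior point of that line, allowing the iteration. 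A secondary technical point is justifying that the integrand $\frac{u(x_0+te_i)+u(x_0-te_i)-2u(x_0)}{|t|^{1+2s}}$ is genuinely in $L^1(\R)$ near $t=0$ (from $u \in C^2$) and at infinity (from $u \in \mathcal{L}_s$ together with $u(x_0)=\inf u$), so that "integral zero, integrand $\geq 0$" legitimately yields "integrand $=0$ a.e.".
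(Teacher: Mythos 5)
Your proposal is correct and follows essentially the same route as the paper's proof: locate a negative interior minimum via lower semicontinuity (with the $\liminf$ hypothesis in the unbounded case), use the nonnegativity of the second differences at that minimum to contradict $-\mathcal{I}u\geq0$, and, when $u(x_{0})=0$, force the integrand to vanish a.e.\ and iterate along coordinate lines to propagate the vanishing over $A\left(\Omega\right)$. The only cosmetic difference is that the paper obtains the strict inequality $-\mathcal{I}u\left(z\right)<0$ directly at the negative minimum, whereas you first deduce $\mathcal{I}u\left(x_{0}\right)=0$ and then contradict the exterior nonnegativity (or, when the line stays inside an unbounded $\Omega$, the condition $\liminf_{\left|x\right|\rightarrow+\infty}u\geq0$); this is the same argument reorganized.
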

\begin{proof}
We assume that $u\geq0$ does not hold in $\Omega$, then by the lower semi-continuity of $u$ on $\overline{\Omega}$, whenever $\Omega$ is bounded or not, there exists a negative minimum point $z$ of $u$ in $\Omega$. Hence
\begin{align*}
-\mathcal{I}u\left(z\right)
&=-\sum_{i=1}^{N}\int_{\R}\frac{u\left(z+te_{i}\right)+u\left(z-te_{i}\right)-2u\left(z\right)}{\left|t\right|^{1+2s}}dt<0,
\end{align*}
which contradicts \eqref{E2-2}. Thus we have $u\geq0$ in $\Omega$. 

If there exists $x_{0}$ such that $u\left(x_{0}\right)=0$ in $\Omega$, then the fact $u\geq0$ in $\R^{N}$ implies that
\begin{align*}
0\leq-\mathcal{I}u\left(x_{0}\right)
&=-\sum_{i=1}^{N}\int_{\R}\frac{u\left(x_{0}+te_{i}\right)+u\left(x_{0}-te_{i}\right)}{\left|t\right|^{1+2s}}dt\leq0,
\end{align*}
that is $u=0$ a.e. in $A\left(x_{0}\right)$. We can substitute $x_{0}$ with any point in $A\left(x_{0}\right)\cap\Omega$, let us repeat this process, we ultimately conclude that $u=0$ a.e. in $A\left(\Omega\right)$.
\end{proof}

For $\lambda\in\mathbb{R}$, we denote
\begin{align*}
x^{\lambda}=\left(2\lambda-x_{1},...,x_{N}\right)
\end{align*}
and
\begin{align*}
\Sigma_{\lambda}=\left\{x\in\R^{N}:x_{1}<\lambda\right\}.
\end{align*}
Let us define the narrow domain $D$ which assumes
\begin{align*}
D\subset\Sigma_{\lambda}\backslash\Sigma_{\lambda-d}=\left\{x\in\R^{N}:\lambda-d\leq x_{1}<\lambda,d>0\right\}.
\end{align*}
We establish a maximum principle for the anti-symmetric functions in $D$.

\begin{lem}\label{L2-7}
Let $\lambda\in\R$, $d>0$ and $\psi>-\left(sd\right)^{-2s}$ in $D$. Assume $u\in C^{2}\left(D\right)\cap\mathcal{L}_{s}$ is lower semi-continuous on $\overline{D}$ and $u$ satisfies 
\begin{align}\label{E2-3}
\left\lbrace 
\begin{aligned}
-\mathcal{I}u\left(x\right)&+\psi\left(x\right)u\left(x\right)\geq0,&&x\in D,\\
u\left(x\right)&\geq0,&&x\in\Sigma_{\lambda}\backslash D,\\
u\left(x^{\lambda}\right)&=-u\left(x\right),&&x\in D.
\end{aligned}
\right.
\end{align}
We have
\begin{itemize}
\item[1.] If $D$ is bounded, then $u\geq0$ in $D$.
\item[2.] If $D$ is unbounded and $\liminf_{\left|x\right|\rightarrow+\infty}u\geq0$, then $u\geq0$ in $D$.
\end{itemize}
If moreover there exists $x_{0}$ such that $u\left(x_{0}\right)=0$ in $D$, then $u=0$ a.e. in $A\left(D\right)$.
\end{lem}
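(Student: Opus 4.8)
The plan is to mimic the proof of Lemma \ref{L2-6}, using the anti-symmetry of $u$ together with the smallness of $d$ to neutralize the sign-indefinite term $\psi u$. Suppose for contradiction that $u\geq 0$ fails in $D$. By lower semi-continuity on $\overline{D}$, together with the hypotheses $u\geq 0$ on $\Sigma_\lambda\backslash D$ and (in the unbounded case) $\liminf_{|x|\to\infty}u\geq 0$, the negative part of $u$ attains a minimum at some $z\in D$ with $m:=u(z)<0$. I would estimate $-\mathcal{I}u(z)=-\sum_{i=1}^N\mathcal{I}_i u(z)$ from below and show it is strictly negative enough to contradict $-\mathcal{I}u(z)\geq -\psi(z)u(z)=-\psi(z)m$; since $m<0$ and $\psi>-(sd)^{-2s}$, the right-hand side is bounded below by $-(sd)^{-2s}|m|\cdot(-1)= (sd)^{-2s}m$, i.e.\ it is $\geq (sd)^{-2s}m$, a negative number, so I must show $-\mathcal{I}u(z)<(sd)^{-2s}m$.

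For the directions $i\geq 2$, the integrand $u(z+te_i)+u(z-te_i)-2u(z)\geq 0$ since $z$ is a minimum of $u$ over $D$... but wait, $z+te_i$ may leave $D$; here I use that $u\geq m$ everywhere on $\Sigma_\lambda$ and, crucially, that by the anti-symmetry $u(x^\lambda)=-u(x)$ we have $u\geq -\,(\text{something})$ on $\Sigma_\lambda^c\cap\{\text{reachable points}\}$. Actually for $i\geq 2$ the reflected points $z\pm te_i$ have the same first coordinate as $z$, so they lie in $\Sigma_\lambda$ (if $z_1<\lambda$ then $(z\pm te_i)_1=z_1<\lambda$), hence $u(z\pm te_i)\geq m=u(z)$ and the $i\geq 2$ contributions to $\mathcal{I}u(z)$ are $\geq 0$, so they contribute $\leq 0$ to $-\mathcal{I}u(z)$. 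For $i=1$, I split the $t$-integral: when $z_1+te_1$ or $z_1-te_1$ stays in $\Sigma_\lambda$ the integrand is $\geq m-u(z)=0$ up to using $u\geq m$; the key gain comes from the range of $t$ for which $z\pm te_1$ crosses the plane $\{x_1=\lambda\}$, where I apply the anti-symmetry: if $z_1+t>\lambda$ then the point $z+te_1$ is the reflection of $(z+te_1)^\lambda$ which has first coordinate $2\lambda-z_1-t$; writing $u(z+te_1)=-u((z+te_1)^\lambda)$ and bounding $u((z+te_1)^\lambda)\geq m$ gives $u(z+te_1)\leq -m=|m|$, but more usefully, combined with $u(z)=m$, the integrand contributes a strictly negative amount. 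Quantitatively, for $t$ in the band $(\lambda-z_1,\,\lambda-z_1+d)$ (which has length $\geq d$ since $\lambda-z_1\leq d$ forces this band to be nonempty within a comparable range) one gets $u(z+te_1)-u(z)\leq -2m=2|m|$ is the wrong sign — let me instead use $u(z+te_1)+u(z-te_1)-2u(z)$: for $t>\lambda-z_1$, $z-te_1\in\Sigma_\lambda$ so $u(z-te_1)\geq m$, and $u(z+te_1)=-u((z+te_1)^\lambda)$; since $z<\lambda$ and $z$ is a minimizer over $D$, we cannot directly bound $u((z+te_1)^\lambda)$ unless $(z+te_1)^\lambda\in D$. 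The clean route: since $(z+te_1)^\lambda$ has first coordinate $2\lambda-z_1-t\in(\lambda-t+ (\lambda-z_1),\ldots)$; we only know $u\geq m$ there because it's in $\Sigma_\lambda$ via... no. The honest statement is $u(z+te_1)=-u((z+te_1)^\lambda)\geq -\sup u$, which is not bounded. So I would instead only use anti-symmetry to conclude $u(z+te_1)\geq -u((z+te_1)^\lambda)$ where the reflected point lies in $\Sigma_\lambda$, and there $u\geq m$ fails to help; one really wants $u\leq |m|$ there. This forces the additional observation that $z$ can be taken to be a \emph{global} negative minimum of $u$ over $\Sigma_\lambda$ (using anti-symmetry to control $\Sigma_\lambda^c$): indeed if $u$ dips below $m$ somewhere in $\Sigma_\lambda^c$, reflect to get a point in $\Sigma_\lambda$ with $u$ above $|m|$, no contradiction; so in fact $\inf_{\Sigma_\lambda}u=m$ is the global infimum over all of $\R^N$ only after noting $u\geq m$ on $\Sigma_\lambda\backslash D$ and $u\geq m$ on $D$, while on $\Sigma_\lambda^c$ we have $u(x)=-u(x^\lambda)$ with $x^\lambda\in\Sigma_\lambda$ so $u(x)\leq -m=|m|$, hence on $\Sigma_\lambda^c$, $u$ could be very negative too, but its negative part there is controlled by the positive part on $\Sigma_\lambda$. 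I would circumvent all this by the standard trick: replace $u$ by $u$ itself and bound $\mathcal{I}_1 u(z)$ using the decomposition $\int_{\R}=\int_{|z_1+t-\lambda|\text{ small}}+\cdots$, keeping only the anti-symmetric contribution $\int (u(z+te_1)-u(z^\lambda+\cdots))$, which is the textbook computation in Berestycki--Nirenberg and in \cite{CLL}.

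Concretely, the main estimate is: $\mathcal{I}_1 u(z)\geq c\int_{\lambda-z_1}^{+\infty}\frac{u(z+te_1)-u(z)}{t^{1+2s}}dt$ plus nonnegative terms, and on the band $t\in(\lambda-z_1,\lambda-z_1+d)$ one has $z+te_1$ reflecting into $D$ (since its first coordinate $2\lambda-(z_1+t)\in(\lambda-d,\lambda)$), so $u(z+te_1)=-u((z+te_1)^\lambda)\geq -m=|m|>0>m=u(z)$, giving an integrand $\geq (|m|-m)/t^{1+2s}=-2m/t^{1+2s}$, wait sign: $u(z+te_1)-u(z)\geq |m|-m = -2m=2|m|>0$. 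Hmm that makes $\mathcal{I}_1 u(z)$ too positive, i.e.\ $-\mathcal{I}_1u(z)$ too negative — which is exactly what I want for the contradiction with $-\mathcal{I}u(z)\geq -\psi(z)u(z)\geq -(sd)^{-2s}|m|$. Integrating, $\int_{\lambda-z_1}^{\lambda-z_1+d}\frac{2|m|}{t^{1+2s}}dt\geq 2|m|\cdot\frac{d}{(\lambda-z_1+d)^{1+2s}}\geq 2|m|\cdot\frac{d}{(2d)^{1+2s}}$ using $\lambda-z_1<d$; but the sharp constant $(sd)^{-2s}$ in the hypothesis suggests comparing instead against the subsolution $t\mapsto t^{-2s}$-type barrier: one shows $\mathcal{I}_1 u(z)\geq |m|(sd)^{-2s}$ via a more careful lower bound (this is where the precise exponent enters, essentially because $\mathcal{I}_1$ of the barrier $(\lambda-x_1)_+^{\,?}$ — cf.\ Lemma \ref{L2-3} — produces the constant $(sd)^{-2s}$). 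Then $-\mathcal{I}u(z)\leq -\mathcal{I}_1 u(z)\leq -|m|(sd)^{-2s}<\psi(z)m$ is false; rather $-\mathcal{I}u(z)\geq -\psi(z)u(z)>-(sd)^{-2s}\cdot|m|\cdot(-1)\cdot(-1)$... I will organize signs so that $-\mathcal{I}u(z)\geq -\psi(z)\,m$ and $-\psi(z)\,m>-(sd)^{-2s}m=(sd)^{-2s}|m|$ (since $m<0$), while the direct estimate gives $-\mathcal{I}u(z)\leq -\mathcal{I}_1u(z)\leq -(sd)^{-2s}|m|<0<(sd)^{-2s}|m|$, a contradiction. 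Hence $u\geq 0$ in $D$.

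The strong maximum principle part is then identical to Lemma \ref{L2-6}: if $u(x_0)=0$ at some $x_0\in D$, then $x_0$ is a (nonnegative) minimum, so $-\mathcal{I}u(x_0)=-\sum_i\int\frac{u(x_0+te_i)+u(x_0-te_i)}{|t|^{1+2s}}dt\leq 0$; combined with the equation $-\mathcal{I}u(x_0)+\psi(x_0)u(x_0)\geq 0$, i.e.\ $-\mathcal{I}u(x_0)\geq 0$, we force $-\mathcal{I}u(x_0)=0$ and hence $u=0$ a.e.\ on $A(x_0)$, and iterating over $A(x_0)\cap D$ yields $u=0$ a.e.\ in $A(D)$. \textbf{The main obstacle} is the direction-$e_1$ estimate showing $\mathcal{I}_1 u(z)\geq (sd)^{-2s}|m|$ with the sharp constant: it requires pairing the anti-symmetry reflection with a barrier computation in the spirit of Lemma \ref{L2-3}, and one must be careful that for $i\geq 2$ the reflected points stay in $\Sigma_\lambda$ (so those terms are harmless) while for $i=1$ only the portion of the integral reaching past the reflection plane produces the crucial negative contribution to $-\mathcal{I}u(z)$; the bookkeeping of which integration ranges land back in $D$ versus merely in $\Sigma_\lambda$ is the delicate point.
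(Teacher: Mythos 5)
Your overall skeleton matches the paper's (negative minimum $z\in D$; the directions $i\geq2$ are harmless because the lines $z\pm te_i$ stay in $\Sigma_{\lambda}$; the narrowness $\lambda-z_{1}\leq d$ is what produces the constant), but the heart of the lemma — the lower bound on $\mathcal{I}_{1}u(z)$ — is not actually established, and the one concrete estimate you give is based on a reversed inequality. On the band where $z+te_{1}$ crosses $T_{\lambda}$ you write $u(z+te_{1})=-u\bigl((z+te_{1})^{\lambda}\bigr)\geq-m$, which would require $u\leq m$ at the reflected point; but minimality only gives $u\geq m$ on $\Sigma_{\lambda}$ (and the reflected point need only lie in the slab $\{\lambda-d<x_{1}<\lambda\}$, not in $D$), so the honest conclusion is $u(z+te_{1})\leq-m$ — an upper bound, useless for bounding $\mathcal{I}_{1}u(z)$ from below, as you yourself observe earlier in the proposal before using the wrong direction anyway. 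The fallback "barrier of type $(\lambda-x_{1})_{+}^{\,?}$ à la Lemma \ref{L2-3}" is only asserted, and it is not how the constant arises. What the paper does instead (and what your reference to the "textbook computation" points at but never performs) is a cancellation, not a pointwise bound: write $\mathcal{I}_{1}u(z)$ as an integral over the line $z_{t}=(t,z_{2},\dots,z_{N})$, split at $t=\lambda$, reflect the far piece by $t\mapsto2\lambda-t$ and use $u(z_{t}^{\lambda})=-u(z_{t})$, and bound the near piece below by replacing its kernel $|t-z_{1}}|^{-1-2s}$ with the smaller reflected kernel $|2\lambda-t-z_{1}|^{-1-2s}$ (legitimate precisely because the numerator $u(z_{t})-u(z)\geq0$ there). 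The unknown values $u(z_{t})$ then cancel between the two pieces, leaving
\begin{align*}
\mathcal{I}_{1}u\left(z\right)\geq-2u\left(z\right)\int_{\lambda}^{+\infty}\frac{dt}{\left|t-z_{1}\right|^{1+2s}}\geq-\frac{u\left(z\right)}{sd^{2s}},
\end{align*}
hence $-\mathcal{I}u(z)\leq u(z)/(sd^{2s})$ and the contradiction with \eqref{E2-3}; no barrier is needed. Your final sign bookkeeping is also off: from $\psi(z)>-(sd)^{-2s}$ and $m<0$ one gets $-\psi(z)m>-(sd)^{-2s}|m|$, not $-\psi(z)m>(sd)^{-2s}|m|$ (the latter would require $\psi(z)>(sd)^{-2s}$); the contradiction still closes once the displayed estimate is in hand, but as written your chain of inequalities does not.

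A secondary gap: the strong-maximum-principle step is not "identical to Lemma \ref{L2-6}". At a zero $x_{0}\in D$ you claim $-\mathcal{I}u(x_{0})\leq0$ because $x_{0}$ is a nonnegative minimum, but that needs $u\geq0$ on all of $A(x_{0})$, and nothing forces $u\geq0$ on the part of the $e_{1}$-line beyond $T_{\lambda}$ (there $u$ is minus its reflection, hence typically $\leq0$). One must redo the $e_{1}$-direction with the same reflection pairing, obtaining $\mathcal{I}_{1}u(x_{0})\geq\int_{-\infty}^{\lambda}\bigl(|t-(x_{0})_{1}|^{-1-2s}-|2\lambda-t-(x_{0})_{1}|^{-1-2s}\bigr)u((x_{0})_{t})\,dt\geq0$, and then conclude $u=0$ a.e.\ on the lines from the vanishing of each nonnegative term; this is what "analogous to Lemma \ref{L2-6}" amounts to in the paper.
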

\begin{proof}
We assume $u\geq0$ in $D$ is not true, by the lower semi-continuity of $u$ in $\overline{D}$, whenever $D$ is bounded or not, there exists a negative minimum point $z$ of $u$ in $D$. Let us denote 
\begin{align*}
z_{t}=z+\left(t-z_{1}\right)e_{1}=\left(t,z_{2},...,z_{N}\right).
\end{align*}
When $i=1$, we have
\begin{align*}
\mathcal{I}_{1}u\left(z\right)
&=\text{P.V.}\int_{\R}\frac{u\left(z+te_{1}\right)-u\left(z\right)}{\left|t\right|^{1+2s}}dt\\
&=\text{P.V.}\int_{\R}\frac{u\left(z_{t}\right)-u\left(z\right)}{\left|t-z_{1}\right|^{1+2s}}dt\\
&=\text{P.V.}\int_{-\infty}^{\lambda}\frac{u\left(z_{t}\right)-u\left(z\right)}{\left|t-z_{1}\right|^{1+2s}}dt+\int_{\lambda}^{+\infty}\frac{u\left(z_{t}\right)-u\left(z\right)}{\left|t-z_{1}\right|^{1+2s}}dt.
\end{align*}
Moreover, we obtain
\begin{align*}
\text{P.V.}\int_{-\infty}^{\lambda}\frac{u\left(z_{t}\right)-u\left(z\right)}{\left|t-z_{1}\right|^{1+2s}}dt
\geq\int_{-\infty}^{\lambda}\frac{u\left(z_{t}\right)-u\left(z\right)}{\left|2\lambda-t-z_{1}\right|^{1+2s}}dt
\end{align*}
and
\begin{align*}
\int_{\lambda}^{+\infty}\frac{u\left(z_{t}\right)-u\left(z\right)}{\left|t-z_{1}\right|^{1+2s}}dt
=\int_{-\infty}^{\lambda}\frac{u\left(z_{t}^{\lambda}\right)-u\left(z\right)}{\left|2\lambda-t-z_{1}\right|^{1+2s}}dt
=\int_{-\infty}^{\lambda}\frac{-u\left(z_{t}\right)-u\left(z\right)}{\left|2\lambda-t-z_{1}\right|^{1+2s}}dt.
\end{align*}
Hence we conclude that
\begin{align*}
\mathcal{I}_{1}u\left(z\right)
&\geq\int_{-\infty}^{\lambda}\frac{u\left(z_{t}\right)-u\left(z\right)}{\left|2\lambda-t-z_{1}\right|^{1+2s}}dt+\int_{-\infty}^{\lambda}\frac{-u\left(z_{t}\right)-u\left(z\right)}{\left|2\lambda-t-z_{1}\right|^{1+2s}}dt\\
&=-2u\left(z\right)\int_{-\infty}^{\lambda}\frac{1}{\left|2\lambda-t-z_{1}\right|^{1+2s}}dt\\
&=-2u\left(z\right)\int^{+\infty}_{\lambda}\frac{1}{\left|t-z_{1}\right|^{1+2s}}dt.
\end{align*}
When $i=2,...,N$, there is $z+te_{i}\in D$, then
\begin{align*}
\mathcal{I}_{i}w_{\lambda}\left(z\right)
=\text{P.V.}\int_{\R}\frac{u\left(z+te_{i}\right)-u\left(z\right)}{\left|t\right|^{1+2s}}dt\geq0.
\end{align*}
Therefore
\begin{align}\label{E2-4}
-\mathcal{I}u\left(z\right)
\leq2u\left(z\right)\int^{+\infty}_{\lambda}\frac{1}{\left|t-z_{1}\right|^{1+2s}}dt.
\end{align}
Notice that for $z\in D$,
\begin{align*}
\int_{\lambda}^{+\infty}\frac{1}{\left|t-z_{1}\right|^{1+2s}}dt
\geq\int_{z_{1}+d}^{+\infty}\frac{1}{\left|t-z_{1}\right|^{1+2s}}dt
=\frac{1}{2sd^{2s}},
\end{align*}
thus we have
\begin{align*}
-\mathcal{I}u\left(z\right)
\leq\frac{u\left(z\right)}{sd^{2s}}.
\end{align*}
Consequently, since that $\psi>-\left(sd\right)^{-2s}$ in $D$, there holds
\begin{align*}
-\mathcal{I}u\left(z\right)+\psi\left(z\right)u\left(z\right)<\left(\frac{1}{sd^{2s}}+\psi\left(z\right)\right)u\left(z\right)<0,
\end{align*}
which contradicts \eqref{E2-3}. We get $u\geq0$ in $D$. The other portion of the proof is analogous to that of Lemma \ref{L2-6}.
\end{proof}

\subsection{Integration by parts formula}
We primarily focus on the integration by parts formula associated with the operator $\mathcal{I}$. As previously indicated in the introduction, the applicability of integration by parts formula is crucial for the analysis of the classical solutions to semilinear or fully nonlinear problems that involve the operator $\mathcal{I}$. A significant distinction arises from the 1-dimensional kernel $\left|t\right|^{-1-s}$, which exhibits divergence in its $N$-dimensional integral at infinity. Consequently, we can perform integration by parts in 1-dimensional contexts or for functions that have a more rapid decay rate.

\begin{lem}\label{L2-8}
Let $i\in\mathbf{S}_{N}$ and $\varphi\in C^{2}_{0}\left(\R^{N}\right)$. Then for any $u\in C^{2}\left(\R^{N}\right)\cap\overline{\mathcal{L}_{s}}$, we have
\begin{align*}
\int_{\R^{N}\times\R}\frac{\varphi\left(x+te_{i}\right)+\varphi\left(x-te_{i}\right)-2\varphi\left(x\right)}{\left|t\right|^{1+2s}}u\left(x\right)dxdt<+\infty.
\end{align*}
For any $u\in C^{2}\left(\R^{N}\right)\cap\mathcal{L}_{s}$, we have
\begin{align*}
\int_{\R^{N}\times\R}\frac{u\left(x+te_{i}\right)+u\left(x-te_{i}\right)-2u\left(x\right)}{\left|t\right|^{1+2s}}\varphi\left(x\right)dxdt<+\infty.
\end{align*}
\end{lem}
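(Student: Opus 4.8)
The plan is to prove the two finiteness statements separately, exploiting the product structure of the kernel $|t|^{-1-2s}$ in the single direction $e_i$. In both cases the integral over $\R^N$ in the directions orthogonal to $e_i$ is harmless (the test function or its companion has compact support there), so the real issue is the one-dimensional integral in $t$, which splits into a region $|t|<1$ where the second-order difference quotient is controlled by the $C^2$ norm, and a region $|t|\ge1$ where one must use the decay/growth hypothesis built into $\overline{\mathcal L_s}$ or $\mathcal L_s$.

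First I would handle the first integral, where $\varphi\in C^2_0(\R^N)$ and $u\in C^2(\R^N)\cap\overline{\mathcal L_s}$. Write $\Phi_i(x,t):=\varphi(x+te_i)+\varphi(x-te_i)-2\varphi(x)$. For $|t|<1$, a second-order Taylor expansion gives $|\Phi_i(x,t)|\le \|D^2\varphi\|_\infty t^2$, so $|\Phi_i(x,t)|/|t|^{1+2s}\le \|D^2\varphi\|_\infty |t|^{1-2s}$, which is integrable in $t$ near $0$ since $1-2s>-1$; and since $\varphi$ has compact support in $x$ this piece contributes a finite amount after integrating $|u(x)|$ over a fixed compact set. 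For $|t|\ge1$, use $|\Phi_i(x,t)|\le 4\|\varphi\|_\infty$, and observe that for fixed $x$ the support condition forces $x+te_i$ or $x-te_i$ (or $x$) to lie in $\operatorname{supp}\varphi$, so $\Phi_i(x,t)$ vanishes unless $x$ lies in a tube of the form $\operatorname{supp}\varphi + \R e_i$ intersected with the condition that $|t|$ is comparable to $|x|$ in the $e_i$-direction; more simply, for each fixed $x$ the set of $t$ with $\Phi_i(x,t)\neq0$ has bounded measure, while integrating in $x$ over the (unbounded) tube requires $\int |u(x)|/|x|^{1+2s}\,dx<\infty$ on that tube. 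The definition of $\overline{\mathcal L_s}$ says $|u(x)|\lesssim |x|^{2s-N+1-\delta_2}$, and $2s-N+1-\delta_2-(1+2s)=-N-\delta_2<-N$, so the integral converges — this is exactly the reason $\overline{\mathcal L_s}$ was introduced. Carrying out this tube/Fubini bookkeeping carefully is the main technical obstacle; the cleanest route is to bound $|\Phi_i(x,t)|\le |\varphi(x+te_i)|+|\varphi(x-te_i)|+2|\varphi(x)|$, split the triple integral into three pieces, change variables $y=x\pm te_i$ in the first two, and reduce everything to $\int_{\R^N\times\R}|\varphi(y)|\,|u(y\mp te_i)|\,|t|^{-1-2s}\,dy\,dt$ plus $\int_{\R^N}|\varphi(x)||u(x)|\big(\int_{|t|<1}|t|^{-1+2s}\cdots\big)$, where now the $u$-factor is evaluated at a point that ranges over a line through $\operatorname{supp}\varphi$; the growth bound on $u$ then makes the $t$-integral converge pointwise in $y$ and the $y$-integral converges by compact support.

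For the second integral, $\varphi\in C^2_0(\R^N)$ again but now the difference quotient is taken on $u\in C^2(\R^N)\cap\mathcal L_s$. Here I would split identically: for $|t|<1$, Taylor expand $u$ (which is $C^2$, hence $|u(x+te_i)+u(x-te_i)-2u(x)|\le t^2\sup_{B}|D^2 u|$ with $B$ a fixed compact neighborhood of $\operatorname{supp}\varphi$, since $x\in\operatorname{supp}\varphi$ and $|t|<1$), giving an integrable $|t|^{1-2s}$ singularity times the bounded quantity $\|\varphi\|_\infty\,|\operatorname{supp}\varphi|$. For $|t|\ge1$, now $x$ is confined to $\operatorname{supp}\varphi$ (a fixed compact set) by the $\varphi$ factor, so $x\pm te_i$ ranges over a line, and $|u(x\pm te_i)|\lesssim (1+|x\pm te_i|)^{2s-\delta_1}\lesssim (1+|t|)^{2s-\delta_1}$ uniformly for $x\in\operatorname{supp}\varphi$ by the $\mathcal L_s$ bound; hence the integrand is $\lesssim \|\varphi\|_\infty (1+|t|)^{2s-\delta_1}|t|^{-1-2s}$, and since $2s-\delta_1-1-2s=-1-\delta_1<-1$ this is integrable over $|t|\ge1$, the $x$-integral over the compact set $\operatorname{supp}\varphi$ being trivially finite. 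The expected difficulty is genuinely only in the first integral's large-$|t|$ regime, because there the $x$-variable is not confined to a compact set — that asymmetry between the two statements is precisely why the first needs the stronger space $\overline{\mathcal L_s}$ while the second is fine with $\mathcal L_s$, and the proof should make that contrast explicit.
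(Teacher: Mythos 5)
Your proposal is correct and follows essentially the same strategy as the paper's proof: both establish absolute integrability by splitting into a near region where the second-order difference is controlled by the $C^{2}$ norm (giving the integrable singularity $\left|t\right|^{1-2s}$) and a far region where the compact support of $\varphi$ forces either $\left|t\right|\gtrsim\left|x\right|$ (first integral) or confines $x$ to $\operatorname{supp}\varphi$ (second integral), with the growth/decay built into $\overline{\mathcal{L}_{s}}$, respectively $\mathcal{L}_{s}$, ensuring convergence at infinity. The only differences are organizational — you split in $t$ (and optionally change variables $y=x\pm te_{i}$ in the far region), while the paper splits in $x$ into $\left|x\right|<2R$ and $\left|x\right|\geq2R$ and bounds the inner $t$-integral by a constant, respectively by $C\left|x\right|^{-1-2s}$ — and these amount to the same estimates.
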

\begin{proof}
We assume supp $\varphi\subseteq B_{R}$ with $R>1$. We define two maps $F_{1},F_{2}:\R^{N}\times\left(\R\backslash\left\{0\right\}\right)\rightarrow\R$ by
\begin{align*}
F_{1}\left(x,t\right):=\frac{\varphi\left(x+te_{i}\right)+\varphi\left(x-te_{i}\right)-2\varphi\left(x\right)}{\left|t\right|^{1+2s}}u\left(x\right)
\end{align*}
and
\begin{align*}
F_{2}\left(x,t\right):=\frac{u\left(x+te_{i}\right)+u\left(x-te_{i}\right)-2u\left(x\right)}{\left|t\right|^{1+2s}}\varphi\left(x\right).
\end{align*}
We will prove that for any $u\in C^{2}\left(\R^{N}\right)\cap\overline{\mathcal{L}_{s}}$, we have
\begin{align}\label{E2-5}
\int_{\R^{N}}\left(\int_{\R}\left|F_{1}\right|dt\right)dx<+\infty,
\end{align}
and for any $u\in C^{2}\left(\R^{N}\right)\cap\mathcal{L}_{s}$, we have
\begin{align}\label{E2-6}
\int_{\R^{N}}\left(\int_{\R}\left|F_{2}\right|dt\right)dx<+\infty,
\end{align}
then Lemma \ref{L2-8} is derived from Tonelli's theorem.

Firstly, we prove \eqref{E2-5}. When $\left|x\right|\geq 2R$, we notice that if $\left|x\pm te_{i}\right|\leq R$, then
\begin{align*}
\left|t\right|\geq\left|x\right|-\left|x\pm te_{i}\right|\geq\frac{\left|x\right|}{2},
\end{align*}
hence we have
\begin{align*}
\int_{\R}\frac{\left|\varphi\left(x+te_{i}\right)+\varphi\left(x-te_{i}\right)-2\varphi\left(x\right)\right|}{\left|t\right|^{1+2s}}dt
&=2\int_{\left|x+te_{i}\right|\leq R}\frac{\left|\varphi\left(x+te_{i}\right)\right|}{\left|t\right|^{1+2s}}dt\\
&\leq \left(4^{1+s}\left\|\varphi\right\|_{L^{\infty}\left(\R^{N}\right)}\int_{\left|x_{i}+t\right|\leq R}dt\right)\frac{1}{\left|x\right|^{1+2s}}.
\end{align*}
When $\left|x\right|<2R$, there holds
\begin{align}\label{E2-7}
\int_{\R}\frac{\left|\varphi\left(x+te_{i}\right)+\varphi\left(x-te_{i}\right)-2\varphi\left(x\right)\right|}{\left|t\right|^{1+2s}}dt\leq C\left(s,R,\left\|\varphi\right\|_{C^{2}\left(\R^{N}\right)}\right).
\end{align}
We conclude that
\begin{align}\label{E2-8}
\begin{split}
&\int_{\R}\frac{\left|\varphi\left(x+te_{i}\right)+\varphi\left(x-te_{i}\right)-2\varphi\left(x\right)\right|}{\left|t\right|^{1+2s}}dt
\leq C\left(s,R,\left\|\varphi\right\|_{C^{2}\left(\R^{N}\right)}\right)\times
\left\lbrace 
\begin{aligned}
&1,&&\left|x\right|<2R,\\
&\frac{1}{\left|x\right|^{1+2s}},&&\left|x\right|\geq 2R.
\end{aligned}
\right.
\end{split}
\end{align}
Since that $u\in C^{2}\left(\R^{N}\right)\cap\overline{\mathcal{L}_{s}}$, we can infer that there exists $\beta_{1}=\beta_{1}\left(N,s,\delta_{2},R,u\right)>0$ such that
\begin{align}\label{E2-9}
\left|u\left(x\right)\right|\leq\beta_{1}\left|x\right|^{2s-N+1-\delta_{2}},\quad\forall \left|x\right|\geq 2R.
\end{align}
Therefore, by \eqref{E2-8} and \eqref{E2-9}, for any $u\in C^{2}\left(\R^{N}\right)\cap\overline{\mathcal{L}_{s}}$ we get
\begin{align*}
\int_{\R^{N}}\left(\int_{\R}\left|F_{1}\right|dt\right)dx
&\leq C\left(s,R,\left\|\varphi\right\|_{C^{2}\left(\R^{N}\right)}\right)\left(\int_{\left|x\right|<2R}\left|u\left(x\right)\right|dx+\int_{\left|x\right|\geq 2R}\frac{\left|u\left(x\right)\right|}{\left|x\right|^{1+2s}}dx\right)\\
&\leq C\left(s,\beta_{1},R,\left\|\varphi\right\|_{C^{2}\left(\R^{N}\right)},\left\|u\right\|_{L^{\infty}\left(B_{2R}\right)}\right)\left(1+\int_{\left|x\right|\geq 2R}\frac{1}{\left|x\right|^{N+\delta_{2}}}dx\right)<+\infty.
\end{align*}

Next, we prove \eqref{E2-6}. Since $u\in C^{2}\left(\R^{N}\right)\cap\mathcal{L}_{s}$, there exists $\beta_{2}=\beta_{2}\left(N,s,\delta_{1},u\right)>0$ such that
\begin{align}\label{E2-10}
\left|u\left(x\right)\right|\leq\beta_{2}\left(1+\left|x\right|^{2s-\delta_{1}}\right),\quad\forall x\in\R^{N}.
\end{align}
For any $\left|x\right|<R$, by the conditions $u\in C^{2}\left(\R^{N}\right)\cap\mathcal{L}_{s}$ and \eqref{E2-10}, we infer that
\begin{align*}
&\frac{\left|u\left(x+te_{i}\right)+u\left(x-te_{i}\right)-2u\left(x\right)\right|}{\left|t\right|^{1+2s}}
\leq C\left(s,\delta_{1},\beta_{2},R,\left\|u\right\|_{C^{2}\left(B_{2R}\right)}\right)\times
\left\lbrace 
\begin{aligned}
&\frac{1}{\left|t\right|^{2s-1}},&&\left|t\right|<R,\\
&\frac{1}{\left|t\right|^{1+\delta_{1}}},&&\left|t\right|\geq R.
\end{aligned}
\right.
\end{align*}
Consequently, we get 
\begin{align}\label{E2-11}
&\int_{\R}\frac{\left|u\left(x+te_{i}\right)+u\left(x-te_{i}\right)-2u\left(x\right)\right|}{\left|t\right|^{1+2s}}dt\leq C\left(s,\delta_{1},\beta_{2},R,\left\|u\right\|_{C^{2}\left(B_{2R}\right)}\right),
\end{align}
and thus
\begin{align*}
\int_{\R^{N}}\left(\int_{\R}\left|F_{2}\right|dt\right)dx
=\int_{\left|x\right|<R}\left|\varphi\left(x\right)\right|\left(\int_{\R}\frac{\left|u\left(x+te_{i}\right)+u\left(x-te_{i}\right)-2u\left(x\right)\right|}{\left|t\right|^{1+2s}}dt\right)dx<+\infty.
\end{align*}
\end{proof}

Once we have established Lemma \ref{L2-8}, we can proceed with integration by parts for functions that belong to the space $C^{2}\left(\R^{N}\right)\cap\overline{\mathcal{L}_{s}}$.

\begin{cor}\label{C2-9}
Let $i\in\mathbf{S}_{N}$ and $\varphi\in C^{2}_{0}\left(\R^{N}\right)$. Then for any $u\in C^{2}\left(\R^{N}\right)\cap\overline{\mathcal{L}_{s}}$, we have
\begin{align*}
\int_{\R^{N}}u\mathcal{I}_{i}\varphi dx=\int_{\R^{N}}\varphi\mathcal{I}_{i}udx.
\end{align*}
\end{cor}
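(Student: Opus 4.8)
The plan is to prove the identity
\[
\int_{\R^N} u\,\mathcal{I}_i\varphi\,dx = \int_{\R^N}\varphi\,\mathcal{I}_i u\,dx
\]
by writing both sides as the iterated integral of the \emph{same} symmetric kernel integrand and invoking Fubini's theorem, whose applicability is precisely what Lemma \ref{L2-8} guarantees. First I would recall the definition
\[
\mathcal{I}_i\varphi(x) = \int_{\R}\frac{\varphi(x+te_i)+\varphi(x-te_i)-2\varphi(x)}{|t|^{1+2s}}\,dt,
\]
so that the left-hand side is the double integral of $F_1(x,t)$ from Lemma \ref{L2-8}, while the right-hand side (using the second symmetric form of $\mathcal{I}_i$, namely $\mathcal{I}_i u(x)=\int_\R \frac{u(x+te_i)+u(x-te_i)-2u(x)}{|t|^{1+2s}}dt$, which is valid for $u\in C^2\cap\mathcal{L}_s\supseteq C^2\cap\overline{\mathcal{L}_s}$) is the double integral of $F_2(x,t)$. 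Since $u\in C^2(\R^N)\cap\overline{\mathcal{L}_s}$, Lemma \ref{L2-8} tells us that \emph{both} $\int_{\R^N\times\R}|F_1|\,dx\,dt<+\infty$ and $\int_{\R^N\times\R}|F_2|\,dx\,dt<+\infty$; hence Fubini's theorem applies to each, and we may freely interchange the order of integration and perform changes of variables under the integral sign.

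The core computation is then a change of variables in the inner integral. Fixing $t\neq 0$ and integrating in $x$ first, I would expand
\[
\int_{\R^N}\bigl[\varphi(x+te_i)+\varphi(x-te_i)-2\varphi(x)\bigr]u(x)\,dx
\]
and, in the first two terms, substitute $x\mapsto x\mp te_i$ (a measure-preserving translation on $\R^N$) to move the shift from $\varphi$ onto $u$. This turns the expression into
\[
\int_{\R^N}\varphi(x)\bigl[u(x-te_i)+u(x+te_i)-2u(x)\bigr]\,dx,
\]
so that after dividing by $|t|^{1+2s}$ and integrating in $t$ the left-hand side becomes the double integral of $F_2$, which by Fubini equals $\int_{\R^N}\varphi\,\mathcal{I}_i u\,dx$. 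The translation is legitimate here because the absolute integrability from Lemma \ref{L2-8} lets us split the three-term numerator and handle each piece separately without the split integrals diverging individually.

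The main obstacle is not the algebra but making sure the interchange of integrals and the term-by-term splitting are justified, i.e. that one never passes through an ill-defined $\infty-\infty$; this is exactly why the hypothesis $u\in\overline{\mathcal{L}_s}$ (rather than merely $\mathcal{L}_s$) is needed, since it is the condition under which $F_1$ is globally integrable against the $|t|^{-1-2s}$ kernel — the $N$-dimensional tail of $u$ must decay fast enough to beat $|x|^{-1-2s}$ from the estimate \eqref{E2-8}. With Lemma \ref{L2-8} in hand this is routine: one applies Fubini to rewrite $\int u\,\mathcal{I}_i\varphi\,dx$ as $\int_\R |t|^{-1-2s}\bigl(\int_{\R^N}(\varphi(x+te_i)+\varphi(x-te_i)-2\varphi(x))u(x)\,dx\bigr)dt$, performs the translations described above, and applies Fubini in reverse to recognize the result as $\int\varphi\,\mathcal{I}_i u\,dx$, completing the proof.
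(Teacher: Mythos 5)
Your proposal is correct and follows essentially the same route as the paper: Lemma \ref{L2-8} gives the absolute integrability of both $F_{1}$ and $F_{2}$, Fubini's theorem lets you swap the order of integration, and for fixed $t$ a translation $x\mapsto x\mp te_{i}$ in the inner integral (where the splitting is harmless since $\varphi$ has compact support and $u$ is locally integrable) transfers the shift onto $u$, after which Fubini is applied in reverse. This matches the paper's proof of Corollary \ref{C2-9}.
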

\begin{proof}

By Lemma \ref{L2-8} and $\overline{\mathcal{L}_{s}}\subset\mathcal{L}_{s}$, we apply Fubini's theorem to get 
\begin{align*}
\int_{\R^{N}}u\mathcal{I}_{i}\varphi dx
&=\int_{\R^{N}}\left(\int_{\R}\frac{\varphi\left(x+te_{i}\right)+\varphi\left(x-te_{i}\right)-2\varphi\left(x\right)}{\left|t\right|^{1+2s}}u\left(x\right)dt\right)dx\\
&=\int_{\R}\left(\int_{\R^{N}}\frac{\varphi\left(x+te_{i}\right)+\varphi\left(x-te_{i}\right)-2\varphi\left(x\right)}{\left|t\right|^{1+2s}}u\left(x\right)dx\right)dt\\
&=\int_{\R}\left(\int_{\R^{N}}\frac{u\left(x+te_{i}\right)+u\left(x-te_{i}\right)-2u\left(x\right)}{\left|t\right|^{1+2s}}\varphi\left(x\right)dx\right)dt\\
&=\int_{\R^{N}}\left(\int_{\R}\frac{u\left(x+te_{i}\right)+u\left(x-te_{i}\right)-2u\left(x\right)}{\left|t\right|^{1+2s}}\varphi\left(x\right)dt\right)dx\\
&=\int_{\R^{N}}\varphi\mathcal{I}_{i}udx.
\end{align*}
\end{proof}

Although Corollary \ref{C2-9} may not be applicable to functions in $C^{2}\left(\R^{N}\right)\cap\mathcal{L}_{s}$, the following integration by parts formula is valid. This can be interpreted as an integration by parts formula for $\left(-\Delta\right)^{s}$ in one dimension.

\begin{lem}\label{L2-10}
Let $i\in\mathbf{S}_{N}$ and $\varphi\in C^{2}_{0}\left(\R^{N}\right)$. Then for any $u\in C^{2}\left(\R^{N}\right)\cap\mathcal{L}_{s}$, we have
\begin{align*}
\int_{\R}u\mathcal{I}_{i}\varphi dx_{i}=\int_{\R}\varphi\mathcal{I}_{i}udx_{i}.
\end{align*}
\end{lem}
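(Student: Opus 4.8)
The statement to prove is Lemma \ref{L2-10}: a one-dimensional integration by parts identity, $\int_\R u\,\mathcal{I}_i\varphi\,dx_i = \int_\R \varphi\,\mathcal{I}_i u\,dx_i$, for $u\in C^2(\R^N)\cap\mathcal{L}_s$ and $\varphi\in C^2_0(\R^N)$, where the integration is only over the $i$-th variable with the other $N-1$ coordinates held fixed. The natural strategy is to freeze the variables $x_j$ for $j\neq i$, so that the problem reduces to a genuinely one-dimensional statement: for fixed $\hat x\in\R^{N-1}$, the functions $t\mapsto u(\hat x, t)$ and $t\mapsto\varphi(\hat x,t)$ are $C^2$ on $\R$, the latter has compact support, and $u(\hat x,\cdot)$ has at most the growth $|t|^{2s-\delta_1}$ at infinity (from the definition of $\mathcal{L}_s$). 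Then $\mathcal{I}_i$ acts like the one-dimensional operator $(-\partial^2)^s$ (up to the constant $C_s$ that we are suppressing), and I need the symmetry of its associated bilinear form.

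First I would record that, by the double-integral bound \eqref{E2-11} from the proof of Lemma \ref{L2-8}, for fixed $\hat x$ and $x_i$ in the (compact) $x_i$-support of $\varphi$, the inner $t$-integral defining $\mathcal{I}_i u(x)$ converges absolutely, and similarly $\mathcal{I}_i\varphi(x)$ is bounded with compact support in $x_i$; so both sides of the claimed identity are finite one-dimensional integrals. Next, I would use the symmetrized (second-difference) form of $\mathcal{I}_i$,
\begin{align*}
\mathcal{I}_i w(x)=\int_\R\frac{w(x+te_i)+w(x-te_i)-2w(x)}{|t|^{1+2s}}\,dt,
\end{align*}
and expand $\int_\R u\,\mathcal{I}_i\varphi\,dx_i$ as a double integral over $(x_i,t)\in\R\times\R$. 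The integrand, with $\hat x$ frozen, is exactly $F_1(x,t)$ restricted to the $x_i$-line; Lemma \ref{L2-8}'s argument (or a direct repetition of the estimates \eqref{E2-8}–\eqref{E2-9}, now in one fewer dimension and using only the slower $\mathcal{L}_s$-decay, which is more than enough in 1D since $\int |x_i|^{2s-\delta_1}|x_i|^{-1-2s}dx_i<\infty$ at infinity) shows this double integral is absolutely convergent. Hence Fubini applies. Then I would perform the change of variables $t\mapsto -t$ on the $x-te_i$ part and, crucially, the shear substitution $x_i\mapsto x_i - t$ (for the $\varphi(x+te_i)$ term) and $x_i\mapsto x_i+t$ (for $\varphi(x-te_i)$), which moves the translation from $\varphi$ onto $u$; reassembling the second differences yields exactly $\int_\R\int_\R \frac{u(x+te_i)+u(x-te_i)-2u(x)}{|t|^{1+2s}}\varphi(x)\,dt\,dx_i$, and one more application of Fubini (justified by the $F_2$-type bound \eqref{E2-11}, again in 1D) gives $\int_\R \varphi\,\mathcal{I}_i u\,dx_i$.

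The only delicate point — and the one I'd expect to be the main obstacle — is justifying the Fubini step on the full line with $u$ allowed the growth $|t|^{2s-\delta_1}$ rather than the faster $\overline{\mathcal{L}_s}$ decay. In the $N$-dimensional setting this growth is exactly what breaks Corollary \ref{C2-9}; the point is that in one dimension the kernel $|t|^{-1-2s}$ is \emph{more} than integrable at infinity once paired with any second difference, so even the worst-case growth of $u$ is controlled. Concretely, for $|x_i|$ large the second difference $\varphi(x+te_i)+\varphi(x-te_i)-2\varphi(x)$ is supported in $|t|\gtrsim|x_i|$ and bounded by $2\|\varphi\|_\infty$, giving a factor $|x_i|^{-2s}$, so the $x_i$-integral of $|u(x)|\,|x_i|^{-2s}\lesssim |x_i|^{-\delta_1}$ — but that alone is not integrable at infinity. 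Here one must instead keep the sharper decay of the second difference of $\varphi$: on its support $\varphi$ is bounded, so the contribution from $|x\pm te_i|\le R$ with $|x_i|\ge 2R$ is $\lesssim |x_i|^{-1-2s}$, and paired with $|u|\lesssim |x_i|^{2s-\delta_1}$ this is $\lesssim|x_i|^{-1-\delta_1}\in L^1$ at infinity. This is precisely the computation in the proof of Lemma \ref{L2-8} for $F_1$, now needing the $\overline{\mathcal{L}_s}$-style estimate — so in fact I'd reduce the whole lemma to: \emph{after freezing $\hat x$, the one-dimensional restriction of $F_1$ (equivalently $F_2$) is in $L^1(\R\times\R)$ whenever $u\in C^2\cap\mathcal{L}_s$,} which follows by repeating the \eqref{E2-8}/\eqref{E2-11} estimates verbatim in one dimension, and then Fubini plus the shear change of variables closes the argument.
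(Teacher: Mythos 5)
Your proposal is correct and follows essentially the same route as the paper: you establish absolute integrability of the one-dimensional restrictions of $F_1,F_2$ (the paper's $F_3,F_4$) over $(x_i,t)$, using the sharp bound $\lesssim|x|^{-1-2s}$ on the $t$-integral of the second difference of $\varphi$ paired with the $\mathcal{L}_s$-growth of $u$, which is integrable in the single variable $x_i$. The subsequent Fubini--Tonelli argument with the shear change of variables transferring the translation from $\varphi$ to $u$ is exactly how the paper concludes (via the scheme of Corollary \ref{C2-9}).
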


\begin{proof}
We assume that supp $\varphi\subseteq B_{R}$ with $R>1$. Let $x=\left(x_{i},x'_{i}\right)$. As the procedures in the proof of Lemma \ref{L2-8}, we define the maps $F_{3},F_{4}:\R\times\left(\R\backslash\left\{0\right\}\right)\rightarrow\R$ by
\begin{align*}
F_{3}\left(x_{i},t\right):=\frac{\varphi\left(x+te_{i}\right)+\varphi\left(x-te_{i}\right)-2\varphi\left(x\right)}{\left|t\right|^{1+2s}}u\left(x\right)
\end{align*}
and
\begin{align*}
F_{4}\left(x_{i},t\right):=\frac{u\left(x+te_{i}\right)+u\left(x-te_{i}\right)-2u\left(x\right)}{\left|t\right|^{1+2s}}\varphi\left(x\right)
\end{align*}
We need to show
\begin{align}\label{E2-12}
\int_{\R}\left(\int_{\R}\left|F_{3}\right|dt\right)dx_{i}<+\infty\quad\text{and}\quad\int_{\R}\left(\int_{\R}\left|F_{4}\right|dt\right)dx_{i}<+\infty.
\end{align}

By \eqref{E2-8} and \eqref{E2-10}, for any $u\in C^{2}\left(\R^{N}\right)\cap\mathcal{L}_{s}$, we obtain
\begin{align*}
&\int_{\R}\left(\int_{\R}\left|F_{3}\right|dt\right)dx_{i}\\
&\leq C\left(s,R,\left\|\varphi\right\|_{C^{2}\left(\R^{N}\right)}\right)\left(\int_{\left|x_{i}\right|<\sqrt{4R^{2}-\left|x'_{i}\right|^{2}}}\left|u\left(x\right)\right|dx_{i}+\int_{\left|x_{i}\right|\geq\sqrt{4R^{2}-\left|x'_{i}\right|^{2}}}\frac{\left|u\left(x\right)\right|}{\left|x\right|^{1+2s}}dx_{i}\right)\\
&\leq C\left(s,\beta_{2},R,\left|x'_{i}\right|,\left\|\varphi\right\|_{C^{2}\left(\R^{N}\right)},\left\|u\right\|_{L^{\infty}\left(B_{2R}\right)}\right)\left(1+\int_{\left|x_{i}\right|\geq\sqrt{4R^{2}-\left|x'_{i}\right|^{2}}}\frac{1+\left|x\right|^{2s-\delta_{1}}}{\left|x\right|^{1+2s}}dx_{i}\right)<+\infty.
\end{align*}
On the other hand, for any $\left|x\right|<R$, we have \eqref{E2-11}, therefore
\begin{align*}
&\int_{\R}\left(\int_{\R}\left|F_{4}\right|dt\right)dx_{i}\\
&=\int_{\left|x_{i}\right|<\sqrt{R^{2}-\left|x'_{i}\right|^{2}}}\left|\varphi\left(x\right)\right|\left(\int_{\R}\frac{\left|u\left(x+te_{i}\right)+u\left(x-te_{i}\right)-2u\left(x\right)\right|}{\left|t\right|^{1+2s}}dt\right)dx_{i}<+\infty.
\end{align*}
In virtue of \eqref{E2-12}, we get Lemma \ref{L2-10} by applying Fubini--Tonelli's theorem as the way stated in Corollary \ref{C2-9}.
\end{proof}

The integration by parts formula for functions in $C^{2}\left(\R_{+}^{N}\right)$ reveals increased complexity. While the function is smooth in $\R_{+}^{N}$, it may not maintain smoothness at the boundary $\partial\R_{+}^{N}$. We concentrate on functions $u\in C^{2}\left(\R_{+}^{N}\right)$ that do not has singularities at the boundary $\partial\R_{+}^{N}$, thereby satisfying
\begin{align}\label{E2-13}
\left\|u\right\|_{C^{2}\left(Q\cap\R^{N}_{+}\right)}<+\infty,\quad \forall \text{ compact }Q\subseteq\overline{\R_{+}^{N}}
\end{align}
and
\begin{align}\label{E2-14}
u=0\quad\text{in }\R^{N}\backslash\R_{+}^{N}.
\end{align}
For any $\varphi\in C^{2}_{0}\left(\R^{N}\right)$, we define
\begin{align*}
v_{\alpha}\left(x\right)=\left(x_{N}\right)_{+}^{\alpha}\varphi\left(x\right).
\end{align*}
The continuous function $v_{\alpha}$ is not smooth on $\partial\R_{+}^{N}$, nevertheless, we have

\begin{lem}\label{L2-11}
Let $i\in\mathbf{S}_{N}$, $\left(2s-1\right)_+<\alpha<2s$ and $\varphi\in C^{2}_{0}\left(\R^{N}\right)$. For any $u\in C^{2}\left(\R_{+}^{N}\right)\cap\overline{\mathcal{L}_{s}}$ that satisfies \eqref{E2-13} and \eqref{E2-14}, we have
\begin{align*}
\int_{\R^{N}}u\mathcal{I}_{i}v_{\alpha}dx=\int_{\R^{N}}v_{\alpha}\mathcal{I}_{i}udx.
\end{align*}
\end{lem}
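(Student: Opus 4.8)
The plan is to mimic the proof of Corollary \ref{C2-9} and Lemma \ref{L2-10}, namely to establish the absolute integrability needed for Fubini's theorem applied to the double integral defining $\int_{\R^N} u\,\mathcal{I}_i v_\alpha\,dx$, and then to swap the order of integration in $x$ and $t$. Write $F_5(x,t)=\bigl(v_\alpha(x+te_i)+v_\alpha(x-te_i)-2v_\alpha(x)\bigr)|t|^{-1-2s}\,u(x)$ and $F_6(x,t)=\bigl(u(x+te_i)+u(x-te_i)-2u(x)\bigr)|t|^{-1-2s}\,v_\alpha(x)$. The goal is to prove $\int_{\R^N}\!\int_{\R}|F_5|\,dt\,dx<+\infty$ and $\int_{\R^N}\!\int_{\R}|F_6|\,dt\,dx<+\infty$, after which Fubini--Tonelli gives the identity exactly as in Corollary \ref{C2-9}.

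First I would handle $F_5$. Since $v_\alpha$ is supported (in the $x'_i$ variables) in $B_R$ but only Hölder-$\alpha$ continuous across $\partial\R^N_+$, I need a replacement for the bound \eqref{E2-8}. For $i\in\mathbf{S}_{N-1}$ the difference $v_\alpha(x\pm te_i)-v_\alpha(x)$ equals $(x_N)_+^\alpha\bigl(\varphi(x\pm te_i)-\varphi(x)\bigr)$, which is smooth, so the same $C^2$-type estimate as \eqref{E2-8} applies with an extra factor $(x_N)_+^\alpha$ that is bounded on $B_{2R}$. For $i=N$ I would split: write $v_\alpha=\omega_\alpha\varphi$ and use the product decomposition of Lemma \ref{L2-4}, so that $\mathcal{I}_N v_\alpha=\omega_\alpha\mathcal{I}_N\varphi+\varphi\mathcal{I}_N\omega_\alpha+I_N[\omega_\alpha,\varphi]$. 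By Lemma \ref{L2-3}, $\mathcal{I}_N\omega_\alpha(x)=C_\alpha x_N^{\alpha-2s}$ in $\R^N_+$, and by Lemma \ref{L2-5} the cross term $I_N[\omega_\alpha,\varphi]$ is continuous up to the boundary and (by the domination $H(t)\in L^1$ proved there, which is uniform for $x$ in compacts) bounded on $B_{2R}$; together with $\omega_\alpha\mathcal{I}_N\varphi$ these give $|\mathcal{I}_N v_\alpha(x)|\le C(1+x_N^{\alpha-2s})$ on $B_{2R}\cap\R^N_+$, which is integrable near $\partial\R^N_+$ because $\alpha-2s>-1$. For $|x|\ge 2R$ the same far-field estimate $|t|\ge|x|/2$ as in Lemma \ref{L2-8} gives the $|x|^{-1-2s}$ decay (with the harmless extra $(x_N)_+^\alpha\le|x|^\alpha$, and $\alpha<2s$), so pairing against $|u(x)|\le\beta_1|x|^{2s-N+1-\delta_2}$ from $u\in\overline{\mathcal{L}_s}$ yields a convergent integral exactly as in Lemma \ref{L2-8}.

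Next $F_6$. Here $v_\alpha$ is bounded with support in $B_R$, so we only need $\int_{|x|<R}\int_\R |u(x+te_i)+u(x-te_i)-2u(x)|\,|t|^{-1-2s}\,dt\,dx<+\infty$. The subtlety is that $u$ is only assumed $C^2$ in the open half space with \eqref{E2-13}, so for $x$ near $\partial\R^N_+$ the second-order Taylor estimate fails for small $|t|$ in the $e_N$ direction. I would split the $t$-integral into $|t|<$ (a fraction of) $x_N$, where the $C^2$ bound on the compact neighborhood still applies and gives an integrable $|t|^{1-2s}$ (using $2s<2$), and $|t|\ge$ that threshold, where one simply bounds $|u(x\pm te_i)-u(x)|$ crudely using $u\in\mathcal{L}_s$ (growth $|x|^{2s-\delta_1}$) and gets a tail controlled by $|t|^{-1-\delta_1}$ plus a contribution of order $x_N^{-(2s-\delta_1)_+}$ or so near the boundary. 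A cleaner route: since $x_N>0$, the function $u$ restricted to the line $x+\R e_i$ is $C^2$ on that line when $i\le N-1$ (the line stays a fixed positive distance $x_N$ from the boundary), and when $i=N$ one can still use the $C^2$ bound from \eqref{E2-13} on $[0,\infty)$ because \eqref{E2-14} makes $u$ vanish for $x_N\le 0$ and \eqref{E2-13} controls $u$ up to the boundary from inside; the worst case is $u\sim x_N^\alpha$-type behavior, already handled by Lemma \ref{L2-5}'s domination argument. Either way one extracts an $L^1(\R^N)$ bound on the inner integral over $|x|<R$.

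The main obstacle I expect is the $i=N$ direction near the boundary $\partial\R^N_+$: both the lack of $C^2$-regularity of $v_\alpha$ there (handled via Lemmas \ref{L2-3}, \ref{L2-4}, \ref{L2-5}) and the corresponding near-boundary control of the $t$-integral in $F_6$ for functions merely in $C^2(\R^N_+)$ satisfying \eqref{E2-13}--\eqref{E2-14}. The hypothesis $(2s-1)_+<\alpha<2s$ is exactly what makes $\alpha-2s>-1$ (so $x_N^{\alpha-2s}$ is integrable near $x_N=0$) and $2s-\alpha<1$ (so the cross-term kernel $H(t)$ in Lemma \ref{L2-5} is integrable), so the whole argument hinges on using that range. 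Once \eqref{E2-12}-type finiteness is in hand, the conclusion follows verbatim from the Fubini--Tonelli manipulation in Corollary \ref{C2-9}, using the elementary symmetry $\int v_\alpha(x\pm te_i)u(x)\,dx=\int v_\alpha(x)u(x\mp te_i)\,dx$ after the change of variables $x\mapsto x\mp te_i$ in the inner $x$-integral.
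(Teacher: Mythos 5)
Your overall scheme coincides with the paper's: introduce $F_{5},F_{6}$, prove the absolute integrability \eqref{E2-15}--\eqref{E2-16}, and conclude by the Fubini--Tonelli manipulation of Corollary \ref{C2-9}. Your treatment of $F_{6}$ (split the $t$-integral at $|t|\asymp x_{N}$, use the $C^{2}$ bound up to the boundary coming from \eqref{E2-13} for small $|t|$ and the $\mathcal{L}_{s}$ growth for large $|t|$, and finish with the integrability of $x_{N}^{\alpha-2s}$ near $\partial\R^{N}_{+}$ because $\alpha>2s-1$) is exactly the paper's estimate \eqref{E2-25}. Where you genuinely diverge is the near-boundary bound for $F_{5}$ in the $e_{N}$ direction: the paper does not route through Lemmas \ref{L2-3}--\ref{L2-5}, but substitutes $t=x_{N}r$, writes the second difference as $x_{N}^{\alpha-2s}B\left(x,r\right)$ with $B\left(x,r\right)=\left(1+r\right)_{+}^{\alpha}\varphi\left(x+x_{N}re_{N}\right)+\left(1-r\right)_{+}^{\alpha}\varphi\left(x-x_{N}re_{N}\right)-2\varphi\left(x\right)$, and uses the elementary bounds $\left|B\right|\lesssim r^{2}$ for $|r|<\frac{1}{2}$ and $\left|B\right|\lesssim|r|^{\alpha}$ for $|r|\geq\frac{1}{2}$, which are valid for every $\alpha\in\left(0,2s\right)$; your product-rule route is workable but buys nothing and imports restrictions, as follows.

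Three points in your version need repair. First, Lemma \ref{L2-5} is stated only for $\alpha<\min\left\{1,2s\right\}$ (its key inequality $\left|\left(x_{N}\pm t\right)_{+}^{\alpha}-x_{N}^{\alpha}\right|\leq|t|^{\alpha}$ uses $\alpha<1$), whereas the present lemma allows $\alpha<2s$, which exceeds $1$ when $s>\frac{1}{2}$; as cited, your argument does not cover the stated range (it is fixable, since for $\alpha\geq1$ the increments of $\omega_{\alpha}$ are locally Lipschitz, but you must supply that case). Second, for Tonelli you need a bound on $\int_{\R}\left|v_{\alpha}\left(x+te_{N}\right)+v_{\alpha}\left(x-te_{N}\right)-2v_{\alpha}\left(x\right)\right||t|^{-1-2s}dt$, not on $\left|\mathcal{I}_{N}v_{\alpha}\left(x\right)\right|$; your decomposition does deliver this provided each of the three pieces is estimated in absolute value (the $\omega_{\alpha}$-piece is absolutely integrable with the same $x_{N}^{\alpha-2s}$ bound, and you rightly invoke the uniform $L^{1}$ domination inside Lemma \ref{L2-5}), but the sentence bounding only the signed operator output is not by itself enough. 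Third, in the far field $|x|\geq2R$ the weight $\left(\cdot\right)_{+}^{\alpha}$ is evaluated at points lying in $\mathrm{supp}\,\varphi\subseteq B_{R}$, hence is at most $R^{\alpha}$, as in \eqref{E2-18}; your ``harmless extra $\left(x_{N}\right)_{+}^{\alpha}\leq|x|^{\alpha}$'' would degrade the decay to $|x|^{\alpha-1-2s}$, and paired with $\left|u\right|\lesssim|x|^{2s-N+1-\delta_{2}}$ the resulting integral converges only if $\alpha<\delta_{2}$, which is not assumed; so the claim ``exactly as in Lemma \ref{L2-8}'' requires the $R^{\alpha}$ observation rather than $|x|^{\alpha}$.
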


\begin{proof}
We assume supp $\varphi\subseteq B_{R}$, $R>1$. Let us define two maps $F_{5},F_{6}:\R^{N}\times\left(\R\backslash\left\{0\right\}\right)\rightarrow\R$ by
\begin{align*}
F_{5}\left(x,t\right):=\frac{v_{\alpha}\left(x+te_{i}\right)+v_{\alpha}\left(x-te_{i}\right)-2v_{\alpha}\left(x\right)}{\left|t\right|^{1+2s}}u\left(x\right)
\end{align*}
and
\begin{align*}
F_{6}\left(x,t\right):=\frac{u\left(x+te_{i}\right)+u\left(x-te_{i}\right)-2u\left(x\right)}{\left|t\right|^{1+2s}}v_{\alpha}\left(x\right).
\end{align*}
We will show that
\begin{align}\label{E2-15}
\int_{\R^{N}}\left(\int_{\R}\left|F_{5}\right|dt\right)dx<+\infty
\end{align}
and
\begin{align}\label{E2-16}
\int_{\R^{N}}\left(\int_{\R}\left|F_{6}\right|dt\right)dx<+\infty.
\end{align}
Once \eqref{E2-15} and \eqref{E2-16} have been proved, we proceed with the proof of Corollary \ref{C2-9} in conjunction with Fubini--Tonelli's theorem to derive Lemma \ref{L2-11}.

Now we prove \eqref{E2-15}. Note that by \eqref{E2-14} we have
\begin{align}\label{E2-17}
\int_{\R^{N}\backslash\R_{+}^{N}}\left(\int_{\R}\left|F_{5}\right|dt\right)dx=0.
\end{align}
When $x\in\R^{N}_{+}\cap\left\{\left|x\right|\geq2R\right\}$, there is
\begin{align*}
\int_{\R}\frac{\left|v_{\alpha}\left(x+te_{i}\right)+v_{\alpha}\left(x-te_{i}\right)-2v_{\alpha}\left(x\right)\right|}{\left|t\right|^{1+2s}}dt
=2\int_{\left|x+te_{i}\right|\leq R}\frac{\left|v_{\alpha}\left(x+te_{i}\right)\right|}{\left|t\right|^{1+2s}}dt.
\end{align*}
Let us recall that 
\begin{align*}
\left|t\right|\geq\frac{\left|x\right|}{2}\quad\text{if }\left|x\pm te_{i}\right|\leq R,
\end{align*} 
thus we know
\begin{align*}
\int_{\left|x+te_{i}\right|\leq R}\frac{\left|v_{\alpha}\left(x+te_{i}\right)\right|}{\left|t\right|^{1+2s}}dt
&=\left\lbrace 
\begin{aligned}
&\int_{\left|x+te_{i}\right|\leq R}\frac{x_{N}^{\alpha}\left|\varphi\left(x+te_{i}\right)\right|}{\left|t\right|^{1+2s}}dt,&&i\in\mathbf{S}_{N-1}\\
&\int_{\left|x+te_{N}\right|\leq R}\frac{\left|\left(x_{N}+t\right)_{+}^{\alpha}\varphi\left(x+te_{N}\right)\right|}{\left|t\right|^{1+2s}}dt,&&i=N
\end{aligned}
\right.\\
&\leq \left(2^{1+2s}R^{\alpha}\left\|\varphi\right\|_{L^{\infty}\left(\R^{N}\right)}\int_{\left|x_{i}+t\right|\leq R}dt\right)\frac{1}{\left|x\right|^{1+2s}}.
\end{align*}
Therefore, when $x\in\R^{N}_{+}\cap\left\{\left|x\right|\geq2R\right\}$, there holds 
\begin{align}\label{E2-18}
\int_{\R}\frac{\left|v_{\alpha}\left(x+te_{i}\right)+v_{\alpha}\left(x-te_{i}\right)-2v_{\alpha}\left(x\right)\right|}{\left|t\right|^{1+2s}}dt
\leq C\left(s,\alpha,R,\left\|\varphi\right\|_{L^{\infty}\left(\R^{N}\right)}\right)\frac{1}{\left|x\right|^{1+2s}}.
\end{align}
When $x\in\R^{N}_{+}\cap\left\{\left|x\right|<2R\right\}$, if $i\in\mathbf{S}_{N-1}$, 
\begin{align*}
&\int_{\R}\frac{\left|v_{\alpha}\left(x+te_{i}\right)+v_{\alpha}\left(x-te_{i}\right)-2v_{\alpha}\left(x\right)\right|}{\left|t\right|^{1+2s}}dt\\
&=\int_{\R}\frac{\left|x_{N}^{\alpha}\varphi\left(x+te_{i}\right)+x_{N}^{\alpha}\varphi\left(x-te_{i}\right)-2x_{N}^{\alpha}\varphi\left(x\right)\right|}{\left|t\right|^{1+2s}}dt\\
&=x_{N}^{\alpha}\int_{\R}\frac{\left|\varphi\left(x+te_{i}\right)+\varphi\left(x-te_{i}\right)-2\varphi\left(x\right)\right|}{\left|t\right|^{1+2s}}dt,
\end{align*}
we get by \eqref{E2-7} and $\alpha<2s$ that
\begin{align}\label{E2-19}
\begin{split}
\int_{\R}\frac{\left|v_{\alpha}\left(x+te_{i}\right)+v_{\alpha}\left(x-te_{i}\right)-2v_{\alpha}\left(x\right)\right|}{\left|t\right|^{1+2s}}dt
&\leq C\left(s,R,\left\|\varphi\right\|_{C^{2}\left(\R^{N}\right)}\right)x_{N}^{\alpha}\\
&\leq C\left(s,\alpha,R,\left\|\varphi\right\|_{C^{2}\left(\R^{N}\right)}\right)x_{N}^{\alpha-2s}.
\end{split}
\end{align}
If $i=N$, let $t=x_{N}r$, we obtain that
\begin{align*}
&\int_{\R}\frac{\left|v_{\alpha}\left(x+te_{N}\right)+v_{\alpha}\left(x-te_{N}\right)-2v_{\alpha}\left(x\right)\right|}{\left|t\right|^{1+2s}}dt\\
&=\int_{\R}\frac{\left|\left(x_{N}+t\right)_{+}^{\alpha}\varphi\left(x+te_{N}\right)+\left(x_{N}-t\right)_{+}^{\alpha}\varphi\left(x-te_{N}\right)-2x_{N}^{\alpha}\varphi\left(x\right)\right|}{\left|t\right|^{1+2s}}dt\\
&=x_{N}^{\alpha-2s}\int_{\R}\frac{\left|\left(1+r\right)_{+}^{\alpha}\varphi\left(x+x_{N}re_{N}\right)+\left(1-r\right)_{+}^{\alpha}\varphi\left(x-x_{N}re_{N}\right)-2\varphi\left(x\right)\right|}{\left|r\right|^{1+2s}}dr.
\end{align*}
We define the function $B:\R^{N}\times\R\rightarrow\R$ by
\begin{align*}
B\left(x,r\right):=\left(1+r\right)_{+}^{\alpha}\varphi\left(x+x_{N}re_{N}\right)+\left(1-r\right)_{+}^{\alpha}\varphi\left(x-x_{N}re_{N}\right)-2\varphi\left(x\right).
\end{align*}
For any $\alpha>0$, one can verify that 
\begin{align*}
B\in C^{2}\left(\R^{N}\times\left\{\left|r\right|\leq\frac{1}{2}\right\}\right)
\end{align*}
satisfies
\begin{align*}
\left\lbrace 
\begin{aligned}
&B\left(x,0\right)=0,&&\forall x\in\R^{N},\\
&B\left(x,r\right)=B\left(x,-r\right),&&\forall\left(x,r\right)\in\R^{N}\times\R.
\end{aligned}
\right.
\end{align*}
Hence for any $x\in\R^{N}_{+}\cap\left\{\left|x\right|<2R\right\}$, we obtain that
\begin{align*}
\left|B\left(x,r\right)\right|
\leq C\left(\alpha,R,\left\|\varphi\right\|_{C^{2}\left(\R^{N}\right)}\right)\times
\left\lbrace 
\begin{aligned}
&\left|r\right|^{2},&&\left|r\right|<\frac{1}{2},\\
&\left|r\right|^{\alpha},&&\left|r\right|\geq\frac{1}{2}.
\end{aligned}
\right.
\end{align*}
Therefore, the condition $\alpha<2s$ implies
\begin{align}\label{E2-20}
\begin{split}
\int_{\R}\frac{\left|v_{\alpha}\left(x+te_{N}\right)+v_{\alpha}\left(x-te_{N}\right)-2v_{\alpha}\left(x\right)\right|}{\left|t\right|^{1+2s}}dt
&=x_{N}^{\alpha-2s}\int_{\R}\frac{\left|B\left(x,r\right)\right|}{\left|r\right|^{1+2s}}dr\\
&\leq C\left(s,\alpha,R,\left\|\varphi\right\|_{C^{2}\left(\R^{N}\right)}\right)x_{N}^{\alpha-2s}.
\end{split}
\end{align}
Combining \eqref{E2-18}, \eqref{E2-19} and \eqref{E2-20}, we conclude that
\begin{align}\label{E2-21}
\begin{split}
&\int_{\R}\frac{\left|v_{\alpha}\left(x+te_{i}\right)+v_{\alpha}\left(x-te_{i}\right)-2v_{\alpha}\left(x\right)\right|}{\left|t\right|^{1+2s}}dt\\
&\leq C\left(s,\alpha,R,\left\|\varphi\right\|_{C^{2}\left(\R^{N}\right)}\right)\times
\left\lbrace 
\begin{aligned}
&\frac{1}{x_{N}^{2s-\alpha}},&&x\in\R^{N}_{+}\cap\left\{\left|x\right|<2R\right\},\\
&\frac{1}{\left|x\right|^{1+2s}},&&x\in\R^{N}_{+}\cap\left\{\left|x\right|\geq2R\right\}.
\end{aligned}
\right.
\end{split}
\end{align}
Since that $u\in C^{2}\left(\R_{+}^{N}\right)\cap\overline{\mathcal{L}_{s}}$ satisfies \eqref{E2-13} and \eqref{E2-14}, we can deduce that there exists $\beta_{3}=\beta_{3}\left(N,s,\delta_{2},R,u\right)>0$ such that
\begin{align}\label{E2-22}
\left|u\left(x\right)\right|\leq\beta_{3}\left|x\right|^{2s-N+1-\delta_{2}},\quad\forall\left|x\right|\geq2R.
\end{align}
By \eqref{E2-17}, \eqref{E2-21}, \eqref{E2-22} and $\alpha>2s-1$, for any $u\in C^{2}\left(\R_{+}^{N}\right)\cap\overline{\mathcal{L}_{s}}$ satisfies \eqref{E2-13} and \eqref{E2-14},
\begin{align*}
&\int_{\R^{N}}\left(\int_{\R}\left|F_{5}\right|dt\right)dx\\
&\leq C\left(s,\alpha,R,\left\|\varphi\right\|_{C^{2}\left(\R^{N}\right)}\right)\left(\int_{\R^{N}_{+}\cap\left\{\left|x\right|<2R\right\}}\frac{\left|u\left(x\right)\right|}{x_{N}^{2s-\alpha}}dx+\int_{\R^{N}_{+}\cap\left\{\left|x\right|\geq2R\right\}}\frac{\left|u\left(x\right)\right|}{\left|x\right|^{1+2s}}dx\right)\\
&\leq C\left(s,\alpha,\beta_{3},R,\left\|\varphi\right\|_{C^{2}\left(\R^{N}\right)},\left\|u\right\|_{L^{\infty}\left(B_{2R}\right)}\right)\\
&\quad\times\left(\int_{\R^{N}_{+}\cap\left\{\left|x\right|<2R\right\}}\frac{1}{x_{N}^{2s-\alpha}}dx+\int_{\R^{N}_{+}\cap\left\{\left|x\right|\geq2R\right\}}\frac{1}{\left|x\right|^{N+\delta_{2}}}dx\right)<+\infty.
\end{align*}

We next prove \eqref{E2-16}. We see that
\begin{align}\label{E2-23}
\int_{\left(\R^{N}\backslash\R_{+}^{N}\right)\cup\left\{\left|x\right|\geq R\right\}}\left(\int_{\R}\left|F_{6}\right|dt\right)dx=0.
\end{align}
Since that $\overline{\mathcal{L}_{s}}\subset\mathcal{L}_{s}$ and thus $u\in C^{2}\left(\R_{+}^{N}\right)\cap\mathcal{L}_{s}$, there exists $\beta_{4}=\beta_{4}\left(N,s,\delta_{1},u\right)>0$ such that
\begin{align}\label{E2-24}
\left|u\left(x\right)\right|\leq\beta_{4}\left(1+\left|x\right|^{2s-\delta_{1}}\right),\quad\forall x\in\R^{N}.
\end{align}
For any $x\in\R^{N}_{+}\cap\left\{\left|x\right|<R\right\}$, we obtain by \eqref{E2-13} that $u\in C^{2}\left(\overline{\R^{N}_{+}}\cap B_{2R}\right)$, hence when $\left|t\right|<x_{N}$, there is $x\pm te_{i}\in B_{x_{N}}\left(x\right)\subset B_{2R}$, together with \eqref{E2-24} we have
\begin{align*}
&\frac{\left|u\left(x+te_{i}\right)+u\left(x-te_{i}\right)-2u\left(x\right)\right|}{\left|t\right|^{1+2s}}\\
&\leq C\left(s,\delta_{1},\beta_{4},R,\left\|u\right\|_{C^{2}\left(\overline{\R^{N}_{+}}\cap B_{2R}\right)}\right)\times
\left\lbrace 
\begin{aligned}
&\frac{1}{\left|t\right|^{2s-1}},&&\left|t\right|<x_{N},\\
&\frac{1+\left|t\right|^{2s-\delta_{1}}}{\left|t\right|^{1+2s}},&&\left|t\right|\geq x_{N}.
\end{aligned}
\right.
\end{align*}
Consequently, for any $x\in\R^{N}_{+}\cap\left\{\left|x\right|<R\right\}$, we infer that
\begin{align}\label{E2-25}
\begin{split}
&\int_{\R}\frac{\left|u\left(x+te_{i}\right)+u\left(x-te_{i}\right)-2u\left(x\right)\right|}{\left|t\right|^{1+2s}}dt\\
&\leq C\left(s,\delta_{1},\beta_{4},R,\left\|u\right\|_{C^{2}\left(\overline{\R^{N}_{+}}\cap B_{2R}\right)}\right)\left(\int_{\left|t\right|<x_{N}}\frac{1}{\left|t\right|^{2s-1}}dt+\int_{\left|t\right|\geq x_{N}}\frac{1+\left|t\right|^{2s-\delta_{1}}}{\left|t\right|^{1+2s}}dt\right)\\
&=C\left(s,\delta_{1},\beta_{4},R,\left\|u\right\|_{C^{2}\left(\overline{\R^{N}_{+}}\cap B_{2R}\right)}\right)\left(x_{N}^{2-2s}+\frac{1}{x_{N}^{2s}}+\frac{1}{x_{N}^{\delta_{1}}}\right)\\
&\leq C\left(s,\delta_{1},\beta_{4},R,\left\|u\right\|_{C^{2}\left(\overline{\R^{N}_{+}}\cap B_{2R}\right)}\right)\frac{1}{x_{N}^{2s}}.
\end{split}
\end{align}
Finally we deduce by \eqref{E2-23}, \eqref{E2-25} and $\alpha>2s-1$ that
\begin{align*}
&\int_{\R^{N}}\left(\int_{\R}\left|F_{6}\right|dt\right)dx\\
&\leq C\left(s,\delta_{1},\beta_{4},R,\left\|\varphi\right\|_{L^{\infty}\left(\R^{N}\right)},\left\|u\right\|_{C^{2}\left(\overline{\R^{N}_{+}}\cap B_{2R}\right)}\right)\int_{\R^{N}_{+}\cap\left\{\left|x\right|<R\right\}}\frac{1}{x_{N}^{2s-\alpha}}dx<+\infty.
\end{align*}
\end{proof}

\subsection{Fourier symbol}
We denote the $\widehat{u}$ to represent the Fourier transform
\begin{align*}
\widehat{u}\left(\xi\right)=\int_{\R^{N}}u\left(x\right)e^{-2\pi ix\cdot\xi}dx,
\end{align*}
while $u^{\vee}$ signifies its inverse. Let us recall that $L$ is the infinitesimal generator of the heat kernel 
\begin{align*}
e^{-tL}=\left(e^{-\widehat{L}t}\right)^{\vee}
\end{align*}
under the action on $u$ by
\begin{align*}
e^{-tL}u\left(x\right)=\int_{\R^{N}}e^{-tL}\left(x,y\right)u\left(y\right)dy,
\end{align*}
where $\widehat{L}$ is the $2s$-order Fourier symbol of $L$ defined by 
\begin{align*}
\widehat{L}\left(\xi\right)=\int_{\mathbb{S}^{N-1}}\left|2\pi\xi\cdot\theta\right|^{2s}d\mu.
\end{align*}
For the convenience of readers, for any $u\in\mathscr{S}$, one can take the Fourier transform on $Lu$ to obtain
\begin{align*}
\widehat{-Lu}\left(\xi\right)
&=-\frac{C_{s}}{2}\int_{\mathbb{S}^{N-1}}\int_{\R}\frac{\widehat{u\left(x+t\theta\right)}+\widehat{u\left(x-t\theta\right)}-2\widehat{u\left(x\right)}}{\left|t\right|^{1+2s}}dtd\mu\\
&=C_{s}\widehat{u}\left(\xi\right)\int_{\mathbb{S}^{N-1}}\int_{\R}\frac{1-\cos\left(2\pi t\xi\cdot\theta\right)}{\left|t\right|^{1+2s}}dtd\mu\\
&=C_{s}\left(\int_{\R}\frac{1-\cos x}{\left|x\right|^{1+2s}}dx\right)\left(\int_{\mathbb{S}^{N-1}}\left|2\pi\xi\cdot\theta\right|^{2s}d\mu\right)\widehat{u}\left(\xi\right)\\
&=\widehat{L}\left(\xi\right)\widehat{u}\left(\xi\right).
\end{align*}
Therefore, under the condition 
\begin{align*}
\mu=\sum^{N}_{i=1}\left(\delta_{e_{i}}+\delta_{-e_{i}}\right),
\end{align*}
we derive that the Fourier symbol of $\mathcal{I}$ is given by
\begin{align*}
\widehat{\mathcal{I}}\left(\xi\right)=\sum^{N}_{i=1}\left|2\pi\xi_{i}\right|^{2s}.
\end{align*}
This Fourier symbol $\widehat{\mathcal{I}}$ is radially symmetric if and only if $s=1$, in which case $\left|2\pi\xi\right|^{2}$ corresponds to the Fourier symbol of $\Delta$. In the context of fractional calculus $0<s<1$, for any $i,j\in\mathbf{S}_{N}$ with $i<j$, we obtain that $\widehat{\mathcal{I}}$ is symmetric with respect to $P_{e_{i},0}$, i.e.
\begin{align*}
\widehat{\mathcal{I}}\left(\xi\right)=\widehat{\mathcal{I}}\left(\left|\xi_{1}\right|,...,\left|\xi_{N}\right|\right).
\end{align*}
Furthermore, the symbol $\widehat{\mathcal{I}}$ is also symmetric with respect to $P_{e_{i}\pm e_{j},0}$, allowing for the interchange of the positions of $\xi_{i}$ and $\xi_{j}$ such that
\begin{align*}
\widehat{\mathcal{I}}\left(\left|\xi_{1}\right|,...,\left|\xi_{i}\right|,...,\left|\xi_{j}\right|,...,\left|\xi_{N}\right|\right)=\widehat{\mathcal{I}}\left(\left|\xi_{1}\right|,...,\left|\xi_{j}\right|,...,\left|\xi_{i}\right|,...,\left|\xi_{N}\right|\right).
\end{align*}

The operator $\mathcal{I}$ can be characterized as a pseudo-differential operator with the symbol $\widehat{\mathcal{I}}$, as expressed in the following manner:
\begin{align}\label{E2-26}
-\mathcal{I}u=\left(\left|2\pi\xi\right|_{2s}^{2s}\widehat{u}\left(\xi\right)\right)^{\vee}
\end{align}
with
\begin{align*}
\left|\xi\right|_{2s}=\left(\sum^{N}_{i=1}\left|\xi_{i}\right|^{2s}\right)^{\frac{1}{2s}}.
\end{align*}

\subsection{Symmetric potential}\label{S2-5}
With regard of the Fourier symbol $\widehat{\mathcal{I}}$, we define the potential
\begin{align}\label{E2-27}
G_{s}\left(x\right):=\left(\left|2\pi\xi\right|^{-2s}_{2s}\right)^{\vee}\left(x\right),\quad x\neq0,
\end{align}
which is the fundamental solution of $\mathcal{I}u=0$. We reformulate the definition \eqref{E2-27} to be
\begin{align}\label{E2-28}
G_{s}\left(x\right)
=\int_{\R^{N}}\left|2\pi\xi\right|^{-2s}_{2s}e^{2\pi ix\cdot\xi}d\xi
=\frac{1}{\left(2\pi\right)^{N}}\int_{\R^{N}}\left|\xi\right|^{-2s}_{2s}\cos\left(x\cdot\xi\right)d\xi.
\end{align}
It is evident that $G_{s}$ does not represent a radially symmetric potential when $0<s<1$ due to the absence of rotational invariance. Let us define an $N\times N$ reflection matrix $\mathbf{M}$ with respect to the plane $P_{e_{i},0}$. By substituting $\zeta=\mathbf{M}^{T}\xi$, we derive that
\begin{align*}
G_{s}\left(\mathbf{M}x\right)
&=\frac{1}{\left(2\pi\right)^{N}}\int_{\R^{N}}\left|\xi\right|^{-2s}_{2s}\cos\left(\mathbf{M}x\cdot\xi\right)d\xi\\
&=\frac{1}{\left(2\pi\right)^{N}}\int_{\R^{N}}\left|\xi\right|^{-2s}_{2s}\cos\left(x\cdot \mathbf{M}^{T}\xi\right)d\xi\\
&=\frac{1}{\left(2\pi\right)^{N}}\int_{\R^{N}}\left|\zeta\right|^{-2s}_{2s}\cos\left(x\cdot\zeta\right)d\zeta\\
&=G_{s}\left(x\right).
\end{align*}
Therefore $G_{s}$ is symmetric with respect to $P_{e_{i},0}$. Employing a similar methodology, it can be demonstrated that $G_{s}$ is also symmetric with respect to $P_{e_{i}\pm e_{j},0}$. Consequently, for any $i,j\in\mathbf{S}_{N}$ with $i<j$,
\begin{align*}
G_{s}\left(x\right)=G\left(\left|x_{1}\right|,...,\left|x_{i}\right|,...,\left|x_{j}\right|,...,\left|x_{N}\right|\right)=G\left(\left|x_{1}\right|,...,\left|x_{j}\right|,...,\left|x_{i}\right|,...,\left|x_{N}\right|\right).
\end{align*} 

For any $\lambda>0$, the potential $G_{s}$ is homogeneous, satisfying
\begin{align}\label{E2-29}
G_{s}\left(\lambda x\right)=\lambda^{2s-N}G_{s}\left(x\right).
\end{align}
The identity \eqref{E2-29} yields $\left|G_{s}\right|\asymp\left|x\right|^{2s-N}$ (the symbol $\asymp$ denotes equivalence). Moreover, by the formula  
\begin{align*}
\max_{i\in\mathbf{S}_{N}}\left|x_{i}\right|\leq\left|x\right|_{2s}\leq N^{\frac{1}{2s}}\max_{i\in\mathbf{S}_{N}}\left|x_{i}\right|,
\end{align*}
we get 
\begin{align*}
\left|G_{s}\right|\asymp\left|x\right|^{2s-N}\asymp\left|x\right|_{2s}^{2s-N}.
\end{align*}
Let us further apply Fubini's theorem along with the formula
\begin{align*}
\frac{1}{x}=\int^{+\infty}_{0}e^{-xy}dy,\quad x>0
\end{align*}
to \eqref{E2-28}, we obtain a more explicit formulation of $G_{s}$:
\begin{align}\label{E2-30}
\begin{split}
G_{s}\left(x\right)
&=\int_{\R^{N}}\left(\int^{+\infty}_{0}e^{-\left|2\pi\xi\right|^{2s}_{2s}y+2\pi ix\cdot\xi}dy\right)d\xi\\
&=\pi^{-N}\int^{+\infty}_{0}\left(\prod_{i=1}^{N}\int_{0}^{+\infty}e^{-yt^{2s}}\cos\left(x_{i}t\right)dt\right)dy.
\end{split}
\end{align}
For any $\left(a,b\right)\in\R_{+}\times\R$, the integral value is given by
\begin{align}\label{E2-31}
\int_{0}^{+\infty}e^{-at^{2s}}\cos\left(bt\right)dt=\left\lbrace 
\begin{aligned}
&\frac{a}{a^{2}+b^{2}},&&s=\frac{1}{2},\\
&\sqrt{\frac{\pi}{4a}}e^{-\frac{b^{2}}{4a}},&&s=1.
\end{aligned}
\right.
\end{align}
Therefore, let us insert \eqref{E2-31} into \eqref{E2-30}, we have
\begin{align}\label{E2-32}
G_{s}\left(x\right)=\left\lbrace 
\begin{aligned}
&\pi^{-N}\int^{+\infty}_{0}\left(\prod_{i=1}^{N}\frac{y}{y^{2}+x_{i}^{2}}\right)dy,&&s=\frac{1}{2},N\geq2.\\
&\frac{\Gamma\left(\frac{N}{2}-1\right)}{4\pi^{\frac{N}{2}}}\cdot\frac{1}{\left|x\right|^{N-2}},&&s=1,N>2.
\end{aligned}
\right.
\end{align}
Moreover, we have
\begin{align*}
G_{\frac{1}{2}}\left(x\right)=\widehat{G_{\frac{1}{2}}}\left(x\right)=\frac{1}{2\pi\left(\left|x_{1}\right|+\left|x_{2}\right|\right)},\quad N=2.
\end{align*}

When $N>2$ and $s=1$, since the function $e^{-|x|^{2}}$ is Gaussian function, thus $G_{1}$ corresponds to the well-known Newtonian potential $\left|\cdot\right|^{2-N}$ (In the case of the fractional Laplacian $\left(-\Delta\right)^{s}$, it is associated with the Riesz potential $\left|\cdot\right|^{2s-N}$). However, for $s\neq\frac{1}{2}$, the integral value \eqref{E2-31} remains undetermined, which consequently renders the potential $G_{s}$ also unknown. 

The positivity of $G_{s}$ is an intriguing question. It is obvious that $G_{\frac{1}{2}}>0$. We guess that $G_{s}$ is a positive potential for $0<s\leq1$, but we can only establish $G_{s}>0$ in the specific case $0<s\leq\frac{1}{2}$. We get the positivity of $G_{s}$ by combining \eqref{E2-30} with the following lemma.

\begin{lem}
Let $a>0$ and $0<s\leq1$. We have
\begin{align*}
\left\lbrace 
\begin{aligned}
\int^{+\infty}_{0}e^{-ax^{2s}}\sin xdx&>0,&&0<s\leq1,\\
\int^{+\infty}_{0}e^{-ax^{2s}}\cos xdx&>0,&&0<s\leq\frac{1}{2}.
\end{aligned}
\right.
\end{align*}
\end{lem}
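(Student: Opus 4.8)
The plan is to reduce both inequalities to a single elementary fact: if $h\colon(0,+\infty)\to(0,+\infty)$ is strictly decreasing with $\int_0^{+\infty}h(x)\,dx<+\infty$, then $\int_0^{+\infty}h(x)\sin x\,dx>0$. To prove this I would split the half-line into the intervals on which $\sin$ has constant sign and pair the $k$-th positive hump $[2k\pi,(2k+1)\pi]$ with the following negative hump $[(2k+1)\pi,(2k+2)\pi]$ through the translation $x\mapsto x+\pi$. Since $\sin(x+\pi)=-\sin x$, the two contributions combine into
\[
\int_{2k\pi}^{(2k+1)\pi}\bigl(h(x)-h(x+\pi)\bigr)\sin x\,dx,
\]
which is strictly positive because $h$ is strictly decreasing and $\sin x>0$ on the interior of the interval. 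Absolute convergence of $\int_0^{+\infty}h(x)\sin x\,dx$ (dominated by $\int_0^{+\infty}h$) justifies decomposing the integral into these blocks and summing over $k\ge0$, giving the strict inequality.

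For the first assertion I would apply this with $h(x)=e^{-ax^{2s}}$. This function is strictly decreasing on $(0,+\infty)$ for every $s>0$, because $x\mapsto x^{2s}$ is strictly increasing, and it is integrable on $(0,+\infty)$ (substitute $u=x^{2s}$, reducing to a Gamma integral). Hence $\int_0^{+\infty}e^{-ax^{2s}}\sin x\,dx>0$ for all $0<s\le1$.

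For the second assertion I would first integrate by parts. Using $\frac{d}{dx}e^{-ax^{2s}}=-2as\,x^{2s-1}e^{-ax^{2s}}$ on $(0,+\infty)$, and noting that the boundary term $e^{-ax^{2s}}\sin x$ vanishes at $x=0$ (because $\sin x\to0$) and at $x\to+\infty$ (because $e^{-ax^{2s}}\to0$ and $\sin$ is bounded), one obtains
\[
\int_0^{+\infty}e^{-ax^{2s}}\cos x\,dx=2as\int_0^{+\infty}x^{2s-1}e^{-ax^{2s}}\sin x\,dx.
\]
When $0<s\le\tfrac12$ the exponent satisfies $2s-1\le0$, so $x^{2s-1}$ is positive and non-increasing; multiplying it by the positive strictly decreasing factor $e^{-ax^{2s}}$ yields a positive strictly decreasing function $h(x)=x^{2s-1}e^{-ax^{2s}}$, integrable near $0$ (since $2s-1>-1$) and at infinity. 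Applying the elementary fact once more finishes the proof.

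The genuinely delicate points are mainly bookkeeping: checking integrability of $h$ near the origin in the cosine case, where $x^{2s-1}$ is singular but integrable precisely because $s>0$; justifying the rearrangement in the pairing argument by absolute convergence; and verifying that the integrated term vanishes at both endpoints. Conceptually the crux is to notice that integration by parts transfers the cosine problem to a weighted sine integral, and that the hypothesis $s\le\tfrac12$ is exactly what keeps the weight $x^{2s-1}e^{-ax^{2s}}$ monotone — for $s>\tfrac12$ this weight first increases and then decreases, so the monotone pairing argument breaks down, which is consistent with the restriction in the statement.
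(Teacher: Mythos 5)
Your proposal is correct and follows essentially the same route as the paper: establish that $\int_0^{+\infty}h(x)\sin x\,dx>0$ for positive strictly decreasing $h$, apply it with $h(x)=e^{-ax^{2s}}$, and for the cosine case integrate by parts to reduce to the weighted sine integral with weight $x^{2s-1}e^{-ax^{2s}}$, which is monotone precisely when $s\le\tfrac12$. The only cosmetic difference is how the per-period positivity is argued (your pairing $h(x)-h(x+\pi)$ versus the paper's comparison with $h$ evaluated at the midpoint of each period), which is an equivalent elementary step.
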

\begin{proof}
We first claim that for any strictly decreasing function $f:\left[0,+\infty\right)\rightarrow\R$, there holds
\begin{align}\label{E2-33}
\int^{+\infty}_{0}f\left(x\right)\sin xdx>0.
\end{align}
Indeed, for any $K\in\N_{+}$, there is
\begin{align*}
\int^{+\infty}_{0}f\left(x\right)\sin xdx
\geq\liminf_{K\rightarrow+\infty}\sum_{k=0}^{K}\int^{2\pi+2k\pi}_{2k\pi}f\left(x\right)\sin xdx.
\end{align*}
For any $k\in\N$, we obtain that
\begin{align*}
\int^{2\pi+2k\pi}_{2k\pi}f\left(x\right)\sin xdx
&=\int^{\pi+2k\pi}_{2k\pi}f\left(x\right)\sin xdx+\int^{2\pi+2k\pi}_{\pi+2k\pi}f\left(x\right)\sin xdx\\
&>f\left(\pi+2k\pi\right)\left(\int^{\pi+2k\pi}_{2k\pi}\sin xdx+\int^{2\pi+2k\pi}_{\pi+2k\pi}\sin xdx\right)=0,
\end{align*}
hence we have \eqref{E2-33}. Let $f\left(x\right)=e^{-ax^{2s}}$ in \eqref{E2-33}, we prove the first integral is positive. 

When $0<s\leq\frac{1}{2}$, the integration by parts formula yields
\begin{align*}
\int^{+\infty}_{0}e^{-ax^{2s}}\cos xdx
&=2sa\int^{+\infty}_{0}e^{-ax^{2s}}x^{2s-1}\sin xdx.
\end{align*}
The positivity is maintained if we let $f\left(x\right)=e^{-ax^{2s}}x^{2s-1}$ in \eqref{E2-33}.
\end{proof}

\subsection{Equivalent integral equation}\label{S2-6}

The equivalence of \eqref{E1-3} and 
\begin{align}\label{E2-34}
u\left(x\right)=\int_{\R^{N}}\frac{u^{p}\left(y\right)}{\left|x-y\right|^{N-2s}}dy,\quad x\in\R^{N}
\end{align}
was studied by Chen--Li--Ou \cite{CLO}. The form \eqref{E2-34} is the Green's representation for the solution of \eqref{E1-3}. The positive solution $u\in L^{\infty}_{loc}\left(\R^{N}\right)\cap\mathcal{L}_{s}$ of \eqref{E1-3} satisfies \eqref{E2-34}, as well as the positive solution of \eqref{E2-34} satisfies \eqref{E1-3} in the distributional sense. Furthermore, the equations \eqref{E1-3} and \eqref{E2-34} are equivalent in the weak sense.

When $p>\frac{N}{N-2s}$, if $u$ is a positive classical solution of \eqref{E1-7} and $u$ satisfies \eqref{E1-9}, it follows that $u\in L^{\frac{N\left(p-1\right)}{2s}}\left(\R^{N}\right)$. By applying the Fourier transform to \eqref{E1-7}, we derive by \eqref{E2-26} that
\begin{align*}
\left|2\pi\xi\right|_{2s}^{2s}\widehat{u}\left(\xi\right)=\widehat{u^{p}}\left(\xi\right),
\end{align*}
then we have
\begin{align*}
u=\left(\widehat{u^{p}}\left(\xi\right)\left|2\pi\xi\right|_{2s}^{-2s}\right)^{\vee}=u^{p}\ast G_{s}.
\end{align*}
On the other hand, suppose $u\in L^{\frac{N\left(p-1\right)}{2s}}\left(\R^{N}\right)$ is a positive solution of \eqref{E1-10}, then for any $\varphi\in C^{\infty}_{0}\left(\R^{N}\right)$, we have
\begin{align*}
\int_{\R^{N}}u\left(-\mathcal{I}\varphi\right)dx
&=\int_{\R^{N}}\left(u^{p}\ast G_{s}\right)\left(-\mathcal{I}\varphi\right)dx\\
&=\int_{\R^{N}}\left(\left(-\mathcal{I}\varphi\right)\ast G_{s}\right)u^{p}dx\\
&=\int_{\R^{N}}u^{p}\varphi dx,
\end{align*}
we obtain that $u$ is a distributional solution of \eqref{E1-7}.

Since that $\left|\xi\right|_{2s}\asymp\left|\xi\right|$, we can define the weak solutions $u\in H^{s}\left(\R^{N}\right)$ of \eqref{E1-7} by 
\begin{align*}
\int_{\R^{N}}\left|\xi\right|_{2s}^{2s}\widehat{u}\left(\xi\right)\widehat{v}\left(\xi\right)d\xi=\int_{\R^{N}}u^{p}\left(x\right)v\left(x\right)dx,\quad\forall v\in C^{\infty}_{0}\left(\R^{N}\right).
\end{align*}
The phenomenon of equivalence observed between \eqref{E1-7} and \eqref{E1-10} can be demonstrated in the weak sense through a similar argument presented by Chen--Li--Ou \cite{CLO}.

\section{Nonexistence and symmetry in the whole space}\label{S3}
\subsection{Nonexistence of positive supersolutions: proof of Theorem \ref{T1-1} and \ref{T1-2}}
\subsubsection{Proof of Theorem \ref{T1-1}}
We choose $\varphi\in C^{\infty}_{0}\left(\R^{N}\right)$ satisfying $0<\varphi\leq1$ in $B_{2}$ and
\begin{align*}
\varphi\left(x\right)=
\left\lbrace 
\begin{aligned}
&1,&&x\in B_{1},\\
&0,&&x\notin B_{2}.
\end{aligned}
\right.
\end{align*}
We assert that there exists $M>0$ such that
\begin{align}\label{E3-1}
-\mathcal{I}\varphi\left(x\right)\leq M\varphi\left(x\right),\quad\forall x\in\R^{N}.
\end{align}
Since that for any $\left|x\right|\geq2$, we have
\begin{align*}
-\mathcal{I}\varphi\left(x\right)
=-\sum^{N}_{i=1}\int_{\R}\frac{\varphi\left(x+te_{i}\right)+\varphi\left(x-te_{i}\right)}{\left|t\right|^{1+2s}}dt\leq0=M\varphi\left(x\right),
\end{align*}
hence we only need to show 
\begin{align}\label{E3-2}
\inf_{x\in B_{2}}\frac{\mathcal{I}\varphi\left(x\right)}{\varphi\left(x\right)}>-\infty.
\end{align}
We assume \eqref{E3-2} does not hold, then there exists a convergent sequence $\left\{x_{n}\right\}\subset B_{2}$ such that
\begin{align}\label{E3-3}
\lim_{n\rightarrow+\infty}\frac{\mathcal{I}\varphi\left(x_{n}\right)}{\varphi\left(x_{n}\right)}=-\infty.
\end{align}
Let $x_{n}\rightarrow x_{\infty}\in\overline{B_{2}}$ as $n\rightarrow+\infty$. We consider the following two cases. 

Case 1: $\left|x_{\infty}\right|<2$. We obtain that
\begin{align*}
\lim_{n\rightarrow+\infty}\frac{\mathcal{I}\varphi\left(x_{n}\right)}{\varphi\left(x_{n}\right)}=\frac{\mathcal{I}\varphi\left(x_{\infty}\right)}{\varphi\left(x_{\infty}\right)},
\end{align*}
which contradicts \eqref{E3-3}.

Case 2: $\left|x_{\infty}\right|=2$. In this case, we have
\begin{align*}
\mathcal{I}\varphi\left(x_{\infty}\right)
=\sum^{N}_{i=1}\int_{\R}\frac{\varphi\left(x_{\infty}+te_{i}\right)+\varphi\left(x_{\infty}-te_{i}\right)}{\left|t\right|^{1+2s}}dt\geq0.
\end{align*}
We assume $\mathcal{I}\varphi\left(x_{\infty}\right)=0$, then it follows that
\begin{align*}
\varphi\left(x_{\infty}\pm te_{i}\right)=0,\quad\forall i\in\mathbf{S}_{N},\forall t\in\R.
\end{align*}
But this can not occur because $\left|x_{\infty}\right|=2$, so we infer that
\begin{align*}
\lim_{n\rightarrow+\infty}\mathcal{I}\varphi\left(x_{n}\right)
=\mathcal{I}\varphi\left(x_{\infty}\right)>0.
\end{align*}
Combining with the fact that 
\begin{align*}
\varphi\left(x_{n}\right)>0\quad\text{and}\quad\lim_{n\rightarrow+\infty}\varphi\left(x_{n}\right)=0,
\end{align*}
we get
\begin{align*}
\lim_{n\rightarrow+\infty}\frac{\mathcal{I}\varphi\left(x_{n}\right)}{\varphi\left(x_{n}\right)}=+\infty.
\end{align*}
This is another contradiction, consequently, the assertion \eqref{E3-2} is valid.

For any $R>0$, we make the rescaled test-function
\begin{align*}
\varphi_{R}\left(x\right)=\varphi\left(\frac{x}{R}\right).
\end{align*}
By Lemma \ref{L2-1}, we have
\begin{align}\label{E3-4}
\mathcal{I}\varphi_{R}\left(x\right)=R^{-2s}\mathcal{I}\varphi\left(\frac{x}{R}\right).
\end{align}
Since $u\in C^{2}\left(\R^{N}\right)\cap\overline{\mathcal{L}_{s}}$ is a nonnegative solution of \eqref{E1-5}, hence by Corollary \ref{C2-9}, $u$ also satisfies \eqref{E1-6}. Therefore \eqref{E3-1} and \eqref{E3-4} imply that
\begin{align}\label{E3-5}
\int_{\R^{N}}u^{p}\varphi_{R}dx
\leq\int_{\R^{N}}u\left(-\mathcal{I}\varphi_{R}\right)dx
\leq MR^{-2s}\int_{\R^{N}}u\varphi_{R}dx.
\end{align}
By the H\"{o}lder inequality, there holds 
\begin{align*}
\int_{\R^{N}}u\varphi_{R}dx
\nonumber&\leq\left(\int_{B_{2R}}u^{p}\varphi_{R}dx\right)^{\frac{1}{p}}\left(\int_{B_{2R}}\varphi_{R}dx\right)^{\frac{p-1}{p}}\\
&\leq C\left(N,p\right)R^{\frac{N\left(p-1\right)}{p}}\left(\int_{\R^{N}}u^{p}\varphi_{R}dx\right)^{\frac{1}{p}}.
\end{align*}
Finally we obtain by \eqref{E3-5} that for any $p>1$,
\begin{align}\label{E3-6}
\int_{\R^{N}}u^{p}\varphi_{R}dx\leq C\left(N,M,p\right)R^{N-\frac{2sp}{p-1}}.
\end{align}

When $1<p<\frac{N}{N-2s}$, we get $u\equiv0$ by letting $R\rightarrow+\infty$ in \eqref{E3-6}. When $p=\frac{N}{N-2s}$, as $R\rightarrow+\infty$, we know from \eqref{E3-6} that 
\begin{align}\label{E3-7}
\int_{\R^{N}}u^{p}dx<+\infty.
\end{align}
Let us rewrite
\begin{align*}
\int_{\R^{N}}u\varphi_{R}dx
=\int_{\left|x\right|\leq\sqrt{R}}u\varphi_{R}dx+\int_{\sqrt{R}\leq\left|x\right|\leq 2R}u\varphi_{R}dx.
\end{align*}
By the H\"{o}lder inequality, we have
\begin{align*}
\int_{\left|x\right|\leq\sqrt{R}}u\varphi_{R}dx
&\leq C\left(N,p\right)R^{s}\left(\int_{\R^{N}}u^{p}dx\right)^{\frac{1}{p}}
\end{align*}
and
\begin{align*}
\int_{\sqrt{R}\leq\left|x\right|\leq 2R}u\varphi_{R}dx
&\leq C\left(N,p\right)R^{2s}\left(\int_{\sqrt{R}\leq\left|x\right|\leq 2R}u^{p}dx\right)^{\frac{1}{p}},
\end{align*}
there follows by \eqref{E3-5} that
\begin{align*}
\int_{\R^{N}}u^{p}\varphi_{R}dx\leq C\left(N,M,p\right)\left[R^{-s}\left(\int_{\R^{N}}u^{p}dx\right)^{\frac{1}{p}}+\left(\int_{\sqrt{R}\leq\left|x\right|\leq 2R}u^{p}dx\right)^{\frac{1}{p}}\right].
\end{align*}
We again get $u\equiv0$ by letting $R\rightarrow+\infty$ as indicated by \eqref{E3-7}.

\subsubsection{Proof of Theorem \ref{T1-2}}
Let us take the same test function as shown in the proof of Theorem \ref{T1-1}. The function $\varphi\in C^{\infty}_{0}\left(\R^{N}\right)$ satisfying $0<\varphi\leq1$ in $B_{2}$ and
\begin{align*}
\varphi\left(x\right)=
\left\lbrace 
\begin{aligned}
&1,&&x\in B_{1},\\
&0,&&x\notin B_{2}.
\end{aligned}
\right.
\end{align*}
Obviously for any $i\in\mathbf{S}_{N}$, there exists $M>0$ such that 
\begin{align}\label{E3-8}
-\mathcal{I}_{i}\varphi\left(x\right)\leq M,\quad\forall x\in\R^{N}.
\end{align}
For any $m>\frac{p}{p-1}$ and $R>0$, we have
\begin{align*}
\int_{\R^{N}}u^{p}\varphi^{m}_{R}dx
\leq\int_{\R^{N}}\varphi^{m}_{R}\left(-\mathcal{I}u\right)dx
=\sum^{N}_{i=1}\int_{\R^{N}}\varphi^{m}_{R}\left(-\mathcal{I}_{i}u\right)dx.
\end{align*}
Let $x=\left(x_{i},x'_{i}\right)$. By Lemma \ref{L2-8}, for $u\in C^{2}\left(\R^{N}\right)\cap\mathcal{L}_{s}$, we can use Fubini's Theorem to obtain
\begin{align*}
\int_{\R^{N}}\varphi^{m}_{R}\left(-\mathcal{I}_{i}u\right)dx
=\int_{\R^{N-1}}\left(\int_{\R}\varphi^{m}_{R}\left(-\mathcal{I}_{i}u\right)dx_{i}\right)dx'_{i}.
\end{align*}
In virtue of Lemma \ref{L2-1}, Lemma \ref{L2-2}, Lemma \ref{L2-10} and \eqref{E3-8}, we infer that
\begin{align*}
\int_{\R}\varphi^{m}_{R}\left(-\mathcal{I}_{i}u\right)dx_{i}
&=\int_{\R}u\left(-\mathcal{I}_{i}\varphi^{m}_{R}\right)dx_{i}\\
&\leq m\int_{\R}u\varphi^{m-1}_{R}\left(-\mathcal{I}_{i}\varphi_{R}\right)dx_{i}\\
&\leq mMR^{-2s}\int_{\R}u\varphi^{m-1}_{R}dx_{i}.
\end{align*}
Therefore, we use Fubini's Theorem again to get 
\begin{align}\label{E3-9}
\begin{split}
\int_{\R^{N}}u^{p}\varphi^{m}_{R}dx
&\leq mMR^{-2s}\sum^{N}_{i=1}\int_{\R^{N-1}}\left(\int_{\R}u\varphi^{m-1}_{R}dx_{i}\right)dx'_{i}\\
&=mMR^{-2s}\sum^{N}_{i=1}\int_{\R^{N}}u\varphi^{m-1}_{R}dx\\
&=C\left(N,m,M\right)R^{-2s}\int_{\R^{N}}u\varphi^{m-1}_{R}dx.
\end{split}
\end{align}
By the H\"{o}lder inequality, we know
\begin{align*}
\int_{\R^{N}}u\varphi_{R}^{m-1}dx
\nonumber&\leq\left(\int_{B_{2R}}u^{p}\varphi_{R}^{m}dx\right)^{\frac{1}{p}}\left(\int_{B_{2R}}\varphi^{m-\frac{p}{p-1}}_{R}dx\right)^{\frac{p-1}{p}}\\
&\leq C\left(N,p\right)R^{\frac{N\left(p-1\right)}{p}}\left(\int_{\R^{N}}u^{p}\varphi_{R}^{m}dx\right)^{\frac{1}{p}},
\end{align*}
hence by \eqref{E3-9}, we finally conclude that
\begin{align}\label{E3-10}
\int_{\R^{N}}u^{p}\varphi_{R}^{m}dx\leq C\left(N,m,M,p\right)R^{N-\frac{2sp}{p-1}}.
\end{align}
The inequality \eqref{E3-10} derives Theorem \ref{T1-2} by the techniques employed in the proof of Theorem \ref{T1-1}.

\subsection{Symmetry of positive solutions: proof of Theorem \ref{T1-3} and \ref{T1-4}}\label{S3-2}
In this subsection, we will establish symmetry results. We use the equivalent formulation of $\mathcal{I}_{i}$ by
\begin{align*}
\mathcal{I}_{i}u\left(x\right)=\text{P.V.}\int_{\R}\frac{u\left(x+te_{i}\right)-u\left(x\right)}{\left|t\right|^{1+2s}}dt.
\end{align*}
The structure of the operator $\mathcal{I}$ imposes restrictions on the selection of directions, thus the conventional approach to proving that $u$ is radially symmetric cannot be fully applied. In this context, we will focus on $e_{1}$ direction and introduce the following notation: $\lambda\in\mathbb{R}$ and
\begin{align*}
x^{\lambda}=\left(2\lambda-x_{1},...,x_{N}\right);\quad u_{\lambda}\left(x\right)=u\left(x^{\lambda}\right);\quad w_{\lambda}\left(x\right)=u_{\lambda}\left(x\right)-u\left(x\right);
\end{align*}
\begin{align*}
T_{\lambda}=\left\{x\in\R^{N}:x_{1}=\lambda\right\};\quad\Sigma_{\lambda}=\left\{x\in\R^{N}:x_{1}<\lambda\right\};\quad\Sigma^{-}_{\lambda}=\left\{x\in\Sigma_{\lambda}:w_{\lambda}<0\right\}.
\end{align*}

\subsubsection{Proof of Theorem \ref{T1-3}-(1)}
Since $u\in C^{2}\left(\R^{N}\right)\cap\mathcal{L}_{s}$ is a positive solution of \eqref{E1-7}, then
\begin{align*}
-\mathcal{I}w_{\lambda}=u_{\lambda}^{p}-u^{p}\geq pu^{p-1}w_{\lambda}\quad\text{in }\Sigma^{-}_{\lambda}.
\end{align*}
Hence $w_{\lambda}\in C^{2}\left(\R^{N}\right)\cap\mathcal{L}_{s}$ satisfies 
\begin{align}\label{E3-11}
\left\lbrace 
\begin{aligned}
-\mathcal{I}w_{\lambda}-pu^{p-1}w_{\lambda}&\geq0&&\text{in }\Sigma^{-}_{\lambda},\\
w_{\lambda}&\geq0&&\text{in }\Sigma_{\lambda}\backslash\Sigma^{-}_{\lambda}.
\end{aligned}
\right.
\end{align}

\textbf{Step 1}. We first claim that for any $\lambda\in\R$, if $w_{\lambda}$ has a negative minimum point $\overline{x}$ in $\Sigma_{\lambda}$, then there exists $\rho=\rho\left(u\right)$ such that $\left|\overline{x}\right|\leq\rho$. Suppose that $\overline{x}$ is a negative minimum point of $w_{\lambda}$ in $\Sigma^{-}_{\lambda}$, by a similar calculation to get \eqref{E2-4}, we have
\begin{align}\label{E3-12}
-\mathcal{I}w_{\lambda}\left(\overline{x}\right)
\leq2w_{\lambda}\left(\overline{x}\right)\int^{+\infty}_{\lambda}\frac{1}{\left|t-\overline{x}_{1}\right|^{1+2s}}dt.
\end{align}
We assume by contradiction that $\left|\overline{x}\right|$ is very large ($\left|\overline{x}\right|>\lambda$). Let
\begin{align*}
\widetilde{x}=3\left|\overline{x}\right|e_{1}+\overline{x},
\end{align*}
then
\begin{align*}
\widetilde{x}_{1}-\left|\overline{x}\right|=2\left|\overline{x}\right|+\overline{x}_{1}\geq\left|\overline{x}\right|>\lambda.
\end{align*}
Thus we have
\begin{align}\label{E3-13}
\int_{\lambda}^{+\infty}\frac{1}{\left|t-\overline{x}_{1}\right|^{1+2s}}dt
>\int_{\widetilde{x}_{1}-\left|\overline{x}\right|}^{\widetilde{x}_{1}+\left|\overline{x}\right|}\frac{1}{\left|t-\overline{x}_{1}\right|^{1+2s}}dt
\geq C\left(N,s\right)\left|\overline{x}\right|^{-2s},
\end{align}
where we applied that
\begin{align*}
\left|t-\overline{x}_{1}\right|
\leq\left|t-\widetilde{x}_{1}\right|+\left|\widetilde{x}_{1}-\overline{x}_{1}\right|
\leq4\left|\widetilde{x}\right|.
\end{align*}
Consequently, we would get from \eqref{E3-12} and \eqref{E3-13} that
\begin{align*}
-\mathcal{I}w_{\lambda}\left(\overline{x}\right)
<C\left(N,s\right)w_{\lambda}\left(\overline{x}\right)\left|\overline{x}\right|^{-2s}.
\end{align*}
Let us see \eqref{E3-11} at the point $\overline{x}\in\Sigma^{-}_{\lambda}$, we obtain that
\begin{align*}
0&\leq-\mathcal{I}w_{\lambda}\left(\overline{x}\right)-pu^{p-1}\left(\overline{x}\right)w_{\lambda}\left(\overline{x}\right)\\
&<C\left(N,s\right)w_{\lambda}\left(\overline{x}\right)\left|\overline{x}\right|^{-2s}-pu^{p-1}\left(\overline{x}\right)w_{\lambda}\left(\overline{x}\right)\\
&=\left[C\left(N,s\right)\left|\overline{x}\right|^{-2s}-pu^{p-1}\left(\overline{x}\right)\right]w_{\lambda}\left(\overline{x}\right),
\end{align*}
which holds if and only if
\begin{align}\label{E3-14}
C\left(N,s\right)-pu^{p-1}\left(\overline{x}\right)\left|\overline{x}\right|^{2s}<0.
\end{align}
However, the condition \eqref{E1-9} indicates \eqref{E3-14} becomes impossible as $\left|\overline{x}\right|\rightarrow+\infty$. Thus, we have substantiated our claim.

\textbf{Step 2}. We assert that there exists $\rho_{0}$ such that for any $\lambda<-\rho_{0}$, there is $w_{\lambda}\geq0$ in $\Sigma_{\lambda}$. We assume $w_{\lambda}$ has a negative minimum point $\overline{x}$ in $\Sigma_{\lambda}$. According to the claim in \textbf{Step 1}, there exists $\rho_{0}$ that is independent of $\lambda$ such that $\overline{x}\in\Sigma_{\lambda}\cap\overline{B_{\rho_{0}}}$. However, for any $\lambda<-\rho_{0}$, we find that $\Sigma_{\lambda}\cap\overline{B_{\rho_{0}}}=\emptyset$, leading to a contradiction. Therefore, for any $\lambda<-\rho_{0}$, there cannot exist a negative minimum point of $w_{\lambda}$ in $\Sigma_{\lambda}$. Furthermore, based on the assumption stated in \eqref{E1-9}, we know 
\begin{align}\label{E3-15}
\lim_{\left|x\right|\rightarrow+\infty}w_{\lambda}\left(x\right)=0,
\end{align}
hence $w_{\lambda}$ does not assume any negative values in $\Sigma_{\lambda}$, and it follows that $w_{\lambda}\geq0$ in $\Sigma_{\lambda}$.

\textbf{Step 3}. We now move the plane $T_{\lambda}$ as $\lambda$ increases from $-\infty$ to $0$ and we define
\begin{align*}
\lambda_{1}:=\sup\left\{\lambda\leq0:w_{\eta}\geq0\text{ in }\Sigma_{\eta},\eta\leq\lambda\right\},
\end{align*}
as well as $\lambda$ decreases from $+\infty$ to $0$ and we define
\begin{align*}
\lambda_{2}:=\inf\left\{\lambda\geq0:w_{\eta}\leq0\text{ in }\Sigma_{\eta},\eta\geq\lambda\right\}.
\end{align*}

Case 1: $\lambda_{1}<0$ or $\lambda_{2}>0$. We only consider $\lambda_{1}<0$ and we will show that $w_{\lambda_{1}}\equiv0$ in $\Sigma_{\lambda_{1}}$. We assume by contradiction that 
\begin{align}\label{E3-16}
w_{\lambda_{1}}\geq0\quad\text{and}\quad w_{\lambda_{1}}\not\equiv0\quad\text{in }\Sigma_{\lambda_{1}}.
\end{align}
Under the assumption \eqref{E3-16}, if there exists $z\in\Sigma_{\lambda_{1}}$ such that $w_{\lambda_{1}}\left(z\right)=0$, then analogous to the calculations in the proof of Lemma \ref{L2-7}, when $i=1$, we obtain
\begin{align*}
\mathcal{I}_{1}w_{\lambda_{1}}\left(z\right)
&=\text{P.V.}\int_{\R}\frac{w_{\lambda_{1}}\left(z_{t}\right)}{\left|t-z_{1}\right|^{1+2s}}dt\\
&=\text{P.V.}\left(\int_{-\infty}^{\lambda_{1}}\frac{w_{\lambda_{1}}\left(z_{t}\right)}{\left|t-z_{1}\right|^{1+2s}}dt+\int_{\lambda_{1}}^{+\infty}\frac{w_{\lambda_{1}}\left(z_{t}\right)}{\left|t-z_{1}\right|^{1+2s}}dt\right)\\
&=\text{P.V.}\int^{\lambda_{1}}_{-\infty}\frac{w_{\lambda_{1}}\left(z_{t}\right)}{\left|t-z_{1}\right|^{1+2s}}+\frac{w_{\lambda_{1}}\left(z_{t}^{\lambda_{1}}\right)}{\left|2\lambda_{1}-t-z_{1}\right|^{1+2s}}dt\\
&=\text{P.V.}\int^{\lambda_{1}}_{-\infty}\left(\frac{1}{\left|t-z_{1}\right|^{1+2s}}-\frac{1}{\left|2\lambda_{1}-t-z_{1}\right|^{1+2s}}\right)w_{\lambda_{1}}\left(z_{t}\right)dt>0.
\end{align*}
When $i=2,...,N$, we have $z+te_{i}\in\Sigma_{\lambda_{1}}$, hence
\begin{align*}
\mathcal{I}_{i}w_{\lambda_{1}}\left(z\right)
=\text{P.V.}\int_{\R}\frac{w_{\lambda_{1}}\left(z+te_{i}\right)}{\left|t\right|^{1+2s}}dt\geq0.
\end{align*}
Therefore we get
\begin{align*}
0=u_{{\lambda}_{1}}^{p}\left(z\right)-u^{p}\left(z\right)=-\mathcal{I}w_{\lambda_{1}}\left(z\right)<0,
\end{align*}
this is not true. We conclude that if \eqref{E3-16} holds, then $w_{\lambda_{1}}>0$ in $\Sigma_{\lambda_{1}}$. 

For any given $\sigma>0$, in the bounded domain $\Sigma_{\lambda_{1}-\sigma}\cap B_{\rho_{0}}$, which is away from $T_{\lambda_{1}}$ and $T_{-\infty}$, there occurs
\begin{align*}
w_{\lambda_{1}}\geq C\left(\lambda_{1},\rho_{0},\sigma\right)\quad\text{in }\overline{\Sigma_{\lambda_{1}-\sigma}\cap B_{\rho_{0}}}.
\end{align*}
By the continuity of $w_{\lambda}$ about $\lambda$, there exists $\epsilon_{\sigma}\in\left(0,-\lambda_{1}\right)$ such that for any $\lambda\in\left[\lambda_{1},\lambda_{1}+\epsilon_{\sigma}\right)$,             
\begin{align*}
w_{\lambda}\geq0\quad\text{in }\overline{\Sigma_{\lambda_{1}-\sigma}\cap B_{\rho_{0}}}.
\end{align*}
By the claim in \textbf{Step 1}, we have
\begin{align*}
w_{\lambda}\geq0\quad\text{in }\Sigma_{\lambda_{1}-\sigma}\backslash\left(\Sigma_{\lambda_{1}-\sigma}\cap B_{\rho_{0}}\right).
\end{align*}
Consequently, for any $\lambda\in\left[\lambda_{1},\lambda_{1}+\epsilon_{\sigma}\right)$, there is
\begin{align}\label{E3-17}
w_{\lambda}\geq0\quad\text{in }\Sigma_{\lambda_{1}-\sigma}.
\end{align}
Let $D=\Sigma^{-}_{\lambda}\cap\left(\Sigma_{\lambda}\backslash\Sigma_{\lambda_{1}-\sigma}\right)$. We obtain that $w_{\lambda}\in C^{2}\left(D\right)\cap\mathcal{L}_{s}$ and satisfies
\begin{align}\label{E3-18}
\left\lbrace 
\begin{aligned}
-\mathcal{I}w_{\lambda}-pu^{p-1}w_{\lambda}&\geq0&&\text{in }D,\\
w_{\lambda}&\geq0&&\text{in }\Sigma_{\lambda}\backslash D.
\end{aligned}
\right.
\end{align}
We apply Lemma \ref{L2-7} with \eqref{E3-15} to \eqref{E3-18}, this leads to the conclusion that there exists small $\sigma_{0}>0$ such that for any $\sigma\in\left(0,\sigma_{0}\right)$, we have $w_{\lambda}\geq0$ in $D$. Let us choose at the begining that $\sigma\in\left(0,\sigma_{0}\right)$, then $D=\emptyset$, which implies
\begin{align}\label{E3-19}
w_{\lambda}\geq0\quad\text{in }\Sigma_{\lambda}\backslash\Sigma_{\lambda_{1}-\sigma}.
\end{align}
Combining \eqref{E3-17} and \eqref{E3-19}, we finally reach that for any $\lambda\in\left[\lambda_{1},\lambda_{1}+\epsilon_{\sigma}\right)$, it holds
\begin{align*}
w_{\lambda}\geq0\quad\text{in }\Sigma_{\lambda},
\end{align*}
this contradicts the definition of $\lambda_{1}$. Therefore $w_{\lambda_{1}}\equiv0$ in $\Sigma_{\lambda_{1}}$ and $u$ is symmetric about $T_{\lambda_{1}}$.

Case 2: $\lambda_{1}=\lambda_{2}=0$. In this case we obtain that $w_{\lambda_{1}}\geq0$ in $\Sigma_{\lambda_{1}}$ and $w_{\lambda_{2}}\leq0$ in $\Sigma_{\lambda_{2}}$, hence $w_{\lambda_{1}}=w_{\lambda_{2}}\equiv0$. Therefore $u$ is symmetric about $T_{0}$.

Through the aforementioned argument, we demonstrate that $u$ is symmetric about some $T_{\lambda}$ and $u$ decreases in $e_{1}$ direction. By considering all coordinate axis directions $e_{i}$, we can conclude that $u$ is symmetric with respect to $P_{e_{i},\widetilde{x}}$ about some $\widetilde{x}\in\R^{N}$ and $u$ decreases in $e_{i}$ direction.

\subsubsection{Proof of Theorem \ref{T1-3}-(2)}
We assume $u$ is radially symmetric about $\widetilde{x}$, then the function $v\left(x\right)=u\left(x+\widetilde{x}\right)=v\left(\left|x\right|\right)$ is also a solution of \eqref{E1-7}. Let us apply the Fourier transform to \eqref{E1-7}, we have by \eqref{E2-26} that 
\begin{align*}
\left|2\pi\xi\right|_{2s}^{2s}\widehat{v}\left(\left|\xi\right|\right)=\widehat{v^{p}}\left(\left|\xi\right|\right),
\end{align*}
that is
\begin{align*}
\left|2\pi\xi\right|_{2s}^{2s}=\frac{\widehat{v^{p}}\left(\left|\xi\right|\right)}{\widehat{v}\left(\left|\xi\right|\right)},
\end{align*}
we deduce that $\left|\xi\right|_{2s}$ is a radial symbol, which results in a contradiction.

\subsubsection{Proof of Theorem \ref{T1-4}-(1)}
Without loss of generality, we would consider $e_{1}$ direction.
\begin{lem}\label{L3-1}
When $s=\frac{1}{2}$, we have
\begin{align*}
G_{s}\left(x-y^{\lambda}\right)=G_{s}\left(x^{\lambda}-y\right),\quad\forall x,y\in\R^{N},x_{1}+y_{1}\neq 2\lambda
\end{align*}
and
\begin{align*}
G_{s}\left(x-y\right)>G_{s}\left(x^{\lambda}-y\right)>0,\quad\forall x,y\in\Sigma_{\lambda},x\neq y.
\end{align*}
\end{lem}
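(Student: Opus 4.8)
The plan is to reduce both assertions to the explicit representation of $G_{s}$ recorded in \eqref{E2-32} for $s=\frac{1}{2}$, together with the reflection symmetry of $G_{s}$ established in Section \ref{S2-5}.

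For the first identity I would write the two arguments out componentwise. Since $y^{\lambda}=\left(2\lambda-y_{1},y_{2},\dots,y_{N}\right)$ and $x^{\lambda}=\left(2\lambda-x_{1},x_{2},\dots,x_{N}\right)$, we have
\begin{align*}
x-y^{\lambda}=\left(x_{1}+y_{1}-2\lambda,\,x_{2}-y_{2},\dots,x_{N}-y_{N}\right),\qquad x^{\lambda}-y=\left(2\lambda-x_{1}-y_{1},\,x_{2}-y_{2},\dots,x_{N}-y_{N}\right),
\end{align*}
so these two vectors differ only by the sign of their first coordinate. Because $G_{s}$ is symmetric with respect to $P_{e_{1},0}$, i.e.\ $G_{s}\left(z_{1},z_{2},\dots,z_{N}\right)=G_{s}\left(-z_{1},z_{2},\dots,z_{N}\right)$ by the reflection-matrix computation in Section \ref{S2-5}, the equality $G_{s}\left(x-y^{\lambda}\right)=G_{s}\left(x^{\lambda}-y\right)$ follows immediately; the hypothesis $x_{1}+y_{1}\neq2\lambda$ only serves to make both arguments nonzero so that $G_{s}$ is defined at them.

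For the second statement I would invoke
\begin{align*}
G_{\frac{1}{2}}\left(z\right)=\pi^{-N}\int_{0}^{+\infty}\prod_{i=1}^{N}\frac{r}{r^{2}+z_{i}^{2}}\,dr,
\end{align*}
which is valid for every $N\geq2$ by \eqref{E2-32} (and in particular already covers the case $N=2$). Positivity away from the origin is immediate since the integrand is strictly positive; for $x,y\in\Sigma_{\lambda}$ the vector $x^{\lambda}-y$ has first coordinate $\left(\lambda-x_{1}\right)+\left(\lambda-y_{1}\right)>0$, hence is nonzero, so $G_{s}\left(x^{\lambda}-y\right)>0$. For the strict inequality, set $a=\lambda-x_{1}>0$ and $b=\lambda-y_{1}>0$; then the first coordinates of $x-y$ and of $x^{\lambda}-y$ are $b-a$ and $a+b$, while all remaining coordinates agree. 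From $\left(b-a\right)^{2}=\left(a+b\right)^{2}-4ab<\left(a+b\right)^{2}$ and the strict monotonicity of $t\mapsto r/(r^{2}+t)$ on $[0,+\infty)$ for each fixed $r>0$, we get $\frac{r}{r^{2}+\left(b-a\right)^{2}}>\frac{r}{r^{2}+\left(a+b\right)^{2}}$ for every $r>0$; multiplying by the common positive factors indexed by $i=2,\dots,N$ and integrating over $r\in\left(0,+\infty\right)$ gives $G_{\frac{1}{2}}\left(x-y\right)>G_{\frac{1}{2}}\left(x^{\lambda}-y\right)$, the integrals being finite because $x\neq y$ forces $x-y\neq0$.

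I do not expect a genuine obstacle here: the only steps needing a word of justification are that a pointwise strict inequality between continuous, integrable integrands for all $r>0$ upgrades to a strict inequality of the integrals, and the bookkeeping that identifies which coordinate of $x^{\lambda}-y$ changes sign relative to $x-y$. Everything else is a direct application of \eqref{E2-32} and the symmetry statements already proved in Section \ref{S2-5}.
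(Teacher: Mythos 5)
Your argument is correct and essentially identical to the paper's: the first identity is obtained there by the change of variables $\zeta=\left(-\xi_{1},\xi_{2},\dots,\xi_{N}\right)$ in the Fourier representation \eqref{E2-28}, which is exactly the evenness of $G_{s}$ in the first coordinate that you invoke, and the second inequality is obtained from the explicit $s=\frac{1}{2}$ formula \eqref{E2-32} together with $\left|x_{1}-y_{1}\right|<2\lambda-x_{1}-y_{1}$ for $x,y\in\Sigma_{\lambda}$, which is your $\left(b-a\right)^{2}<\left(a+b\right)^{2}$ observation. The only cosmetic differences are that you cite the reflection symmetry already proved in Section \ref{S2-5} instead of redoing the substitution, and that you spell out the positivity and the passage from a strict pointwise inequality of integrands to a strict inequality of integrals, which the paper leaves implicit (your finiteness remark is at the same level of rigor as the paper's own use of \eqref{E2-32}).
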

\begin{proof}
For $G_{s}$ defined by \eqref{E2-28}, let $\zeta=\left(-\xi_{1},\xi_{2},...,\xi_{N}\right)$, then
\begin{align*}
G_{s}\left(x-y^{\lambda}\right)
&=\frac{1}{\left(2\pi\right)^{N}}\int_{\R^{N}}\left|\xi\right|^{-2s}_{2s}\cos\left(\left(x-y^{\lambda}\right)\cdot\xi\right)d\xi\\
&=\frac{1}{\left(2\pi\right)^{N}}\int_{\R^{N}}\left|\zeta\right|^{-2s}_{2s}\cos\left(\left(x^{\lambda}-y\right)\cdot\zeta\right)d\zeta\\
&=G_{s}\left(x^{\lambda}-y\right).
\end{align*}
When $s=\frac{1}{2}$, we consider the form given in \eqref{E2-32} to obtain that for any $x,y\in\Sigma_{\lambda}$,
\begin{align*}
G_{s}\left(x-y\right)
&=\pi^{-N}\int^{+\infty}_{0}\left(\prod_{i=1}^{N}\frac{z}{z^{2}+\left|x_{i}-y_{i}\right|^{2}}\right)dz\\
&>\pi^{-N}\int^{+\infty}_{0}\left(\frac{z}{z^{2}+\left|2\lambda-x_{1}-y_{1}\right|^{2}}\cdot\prod_{i=2}^{N}\frac{z}{z^{2}+\left|x_{i}-y_{i}\right|^{2}}\right)dz\\
&=G_{s}\left(x^{\lambda}-y\right).
\end{align*}
\end{proof}
We start to prove Theorem \ref{T1-4}-(1). Since $u$ is a positive solution of \eqref{E1-10}, by Lemma \ref{L3-1}, we have
\begin{align*}
u\left(x\right)
&=\int_{\Sigma_{\lambda}}u^{p}\left(y\right)G_{s}\left(x-y\right)dy+\int_{\mathbb{R}^{N}\backslash\Sigma_{\lambda}}u^{p}\left(y\right)G_{s}\left(x-y\right)dy\\
&=\int_{\Sigma_{\lambda}}u^{p}\left(y\right)G_{s}\left(x-y\right)dy+\int_{\Sigma_{\lambda}}u_{\lambda}^{p}\left(y\right)G_{s}\left(x^{\lambda}-y\right)dy.
\end{align*}
Similarly, we would obtain
\begin{align*}
u_{\lambda}\left(x\right)
=\int_{\Sigma_{\lambda}}u^{p}\left(y\right)G_{s}\left(x^{\lambda}-y\right)dy+\int_{\Sigma_{\lambda}}u_{\lambda}^{p}\left(y\right)G_{s}\left(x-y\right)dy.
\end{align*}
we conclude that
\begin{align}\label{E3-20}
u\left(x\right)-u_{\lambda}\left(x\right)=\int_{\Sigma_{\lambda}}\left[G_{s}\left(x-y\right)-G_{s}\left(x^{\lambda}-y\right)\right]\left[u^{p}\left(y\right)-u_{\lambda}^{p}\left(y\right)\right]dy.
\end{align}

For any $x\in \Sigma_{\lambda}$, by Lemma \ref{L3-1} and \eqref{E3-20}, the Mean Value Theorem implies that
\begin{align*}
u\left(x\right)-u_{\lambda}\left(x\right)\leq\left(p-1\right)\int_{\Sigma^{-}_{\lambda}}G_{s}\left(x-y\right)u^{p-1}\left(y\right)\left[u\left(y\right)-u_{\lambda}\left(y\right)\right]dy.
\end{align*}
Since that $\left|G_{s}\right|\asymp\left|x\right|^{2s-N}$, hence the Hardy-Littlewood-Sobolev inequality and H\"{o}lder's inequality indicate that for any $\kappa>\frac{N}{N-2s}$, there is
\begin{align}\label{E3-21}
\begin{split}
\left\|w_{\lambda}\right\|_{L^{\kappa}\left({\Sigma^{-}_{\lambda}}\right)}
&\leq C\left(N,s,p,\kappa\right)\left\|u^{p-1}w_{\lambda}\right\|_{L^{\frac{N\kappa}{N+2s\kappa}}\left({\Sigma^{-}_{\lambda}}\right)}\\
&\leq C\left(N,s,p,\kappa\right)\left\|u\right\|^{p-1}_{L^{\frac{N\left(p-1\right)}{2s}}\left({\Sigma^{-}_{\lambda}}\right)}\left\|w_{\lambda}\right\|_{L^{\kappa}\left({\Sigma^{-}_{\lambda}}\right)}.
\end{split}
\end{align}
Since $u\in L^{\frac{N\left(p-1\right)}{2s}}\left(\R^{N}\right)$, there exists $\lambda_{0}<0$ such that for $\lambda\leq\lambda_{0}$, we have
\begin{align*}
C\left(N,s,p,\kappa\right)\left\|u\right\|^{p-1}_{L^{\frac{N\left(p-1\right)}{2s}}\left({\Sigma^{-}_{\lambda}}\right)}<1.
\end{align*}
Consequently we get by \eqref{E3-21} that
\begin{align*}
\left\|w_{\lambda}\right\|_{L^{\kappa}\left({\Sigma^{-}_{\lambda}}\right)}=0,
\end{align*}
that is $w_{\lambda}\geq0$ in $\Sigma_{\lambda}$. 

We move the plane $T_{\lambda}$ as $\lambda$ increases from $-\infty$ to $0$ and we define
\begin{align*}
\lambda_{1}:=\sup\left\{\lambda\leq0:w_{\eta}\geq0\text{ in }\Sigma_{\eta},\eta\leq\lambda\right\},
\end{align*}
as well as $\lambda$ decreases from $+\infty$ to $0$ and we define
\begin{align*}
\lambda_{2}:=\inf\left\{\lambda\geq0:w_{\eta}\leq0\text{ in }\Sigma_{\eta},\eta\geq\lambda\right\}.
\end{align*}

Case 1: $\lambda_{1}<0$ or $\lambda_{2}>0$. We only consider $\lambda_{1}<0$ and we will show that $w_{\lambda_{1}}\equiv0$ in $\Sigma_{\lambda_{1}}$. We assume by contradiction that 
\begin{align*}
w_{\lambda_{1}}\geq0\quad\text{and}\quad w_{\lambda_{1}}\not\equiv0\quad\text{in }\Sigma_{\lambda_{1}}.
\end{align*}
If there exists $z\in\Sigma_{\lambda_{1}}$ such that $w_{\lambda_{1}}\left(z\right)=0$, then by \eqref{E3-20}, we have $0=w_{{\lambda}_{1}}\left(z\right)<0$, this is not true, so we know $w_{\lambda_{1}}>0$ in $\Sigma_{\lambda_{1}}$.

Since $u\in L^{\frac{N\left(p-1\right)}{2s}}\left(\R^{N}\right)$, we can let $R>0$ such that 
\begin{align}\label{E3-22}
C\left(N,s,p,\kappa\right)\left\|u\right\|^{p-1}_{L^{\frac{N\left(p-1\right)}{2s}}\left(\R^{N}\backslash B_{R}\right)}<\frac{1}{2}.
\end{align}
For any $\tau>0$, we denote 
\begin{align*}
S_{\tau}=\left\{x\in\Sigma_{\lambda_{1}}:w_{\lambda_{1}}>\tau\right\}.
\end{align*}
Notice that $w_{\lambda_{1}}>0$ in $\Sigma_{\lambda_{1}}$, hence for any $\sigma>0$, there exists $\tau_{\sigma}>0$ such that
\begin{align}\label{E3-23}
\left|\left(\Sigma_{\lambda_{1}}\backslash S_{\tau_{\sigma}}\right)\cap B_{R}\right|<\sigma.
\end{align}
By the assumption $u\in L^{\frac{N\left(p-1\right)}{2s}}\left(\R^{N}\right)$, we know there exists $\epsilon_{\sigma}\in\left(0,-\lambda_{1}\right)$ such that for any $\lambda\in\left[\lambda_{1},\lambda_{1}+\epsilon_{\sigma}\right)$, we have
\begin{align}\label{E3-24}
\left|\left(\Sigma_{\lambda}\backslash\Sigma_{\lambda_{1}}\right)\cap B_{R}\right|<\sigma
\end{align}
and
\begin{align*}
\left|\left\{x\in B_{R}:u_{\lambda_{1}}-u_{\lambda}>\tau_{\sigma}\right\}\right|\leq\frac{1}{\tau_{\sigma}^{\frac{N\left(p-1\right)}{2s}}}\left\|u_{\lambda_{1}}-u_{\lambda}\right\|^{\frac{N\left(p-1\right)}{2s}}_{L^{\frac{N\left(p-1\right)}{2s}}\left({B_{R}}\right)}<\sigma.
\end{align*}
For any $x\in\Sigma^{-}_{\lambda}\cap S_{\tau_{\sigma}}\cap B_{R}$, we obtain
\begin{align*}
u_{\lambda_{1}}-u_{\lambda}=w_{\lambda_{1}}-w_{\lambda}>\tau_{\sigma},
\end{align*}
therefore
\begin{align}\label{E3-25}
\left|\Sigma^{-}_{\lambda}\cap S_{\tau_{\sigma}}\cap B_{R}\right|<\sigma.
\end{align}
We now follow the relationships
\begin{align*}
S_{\tau_{\sigma}}\cap\left(\Sigma_{\lambda_{1}}\backslash S_{\tau_{\sigma}}\right)\cap\left(\Sigma_{\lambda}\backslash\Sigma_{\lambda_{1}}\right)=\emptyset
\end{align*}
and
\begin{align*}
\Sigma^{-}_{\lambda}\subseteq\Sigma_{\lambda}=S_{\tau_{\sigma}}\cup\left(\Sigma_{\lambda_{1}}\backslash S_{\tau_{\sigma}}\right)\cup\left(\Sigma_{\lambda}\backslash\Sigma_{\lambda_{1}}\right),
\end{align*}
together with  \eqref{E3-23}, \eqref{E3-24} and \eqref{E3-25}, we find that
\begin{align*}
\left|\Sigma^{-}_{\lambda}\cap B_{R}\right|<\left|\Sigma^{-}_{\lambda}\cap S_{\tau_{\sigma}}\cap B_{R}\right|+\left|\left(\Sigma_{\lambda_{1}}\backslash S_{\tau_{\sigma}}\right)\cap B_{R}\right|+\left|\left(\Sigma_{\lambda}\backslash\Sigma_{\lambda_{1}}\right)\cap B_{R}\right|<3\sigma.
\end{align*}
Let us choose $\sigma$ to be sufficiently small such that
\begin{align*}
C\left(N,s,p,\kappa\right)\left\|u\right\|^{p-1}_{L^{\frac{N\left(p-1\right)}{2s}}\left(\Sigma^{-}_{\lambda}\cap B_{R}\right)}<\frac{1}{2}.
\end{align*}
Combining with \eqref{E3-22}, we have
\begin{align*}
C\left(N,s,p,\kappa\right)\left\|u\right\|^{p-1}_{L^{\frac{N\left(p-1\right)}{2s}}\left(\Sigma^{-}_{\lambda}\right)}<1.
\end{align*}
We get by \eqref{E3-21} that
\begin{align*}
\left\|w_{\lambda}\right\|_{L^{\kappa}\left({\Sigma^{-}_{\lambda}}\right)}=0,
\end{align*}
hence for any $\lambda\in\left[\lambda_{1},\lambda_{1}+\epsilon_{\sigma}\right)$, there holds
\begin{align*}
w_{\lambda}\geq0\quad\text{in }\Sigma_{\lambda},
\end{align*}
this contradicts the definition of $\lambda_{1}$. Therefore $w_{\lambda_{1}}\equiv0$ in $\Sigma_{\lambda_{1}}$ and $u$ is symmetric about $T_{\lambda_{1}}$.

Case 2: $\lambda_{1}=\lambda_{2}=0$. We obtain that $w_{\lambda_{1}}\geq0$ in $\Sigma_{\lambda_{1}}$ and $w_{\lambda_{2}}\leq0$ in $\Sigma_{\lambda_{2}}$, then we have $w_{\lambda_{1}}=w_{\lambda_{2}}\equiv0$. Therefore $u$ is symmetric about $T_{0}$.

We have already shown that $u$ is symmetric about some $T_{\lambda}$ and $u$ decreases in $e_{1}$ direction. Let us consider all the coordinate axis directions $e_{i}$, we obtain that $u$ is symmetric with respect to $P_{e_{i},\widetilde{x}}$ about some $\widetilde{x}\in\R^{N}$ and $u$ decreases in $e_{i}$ direction.

\subsubsection{Proof of Theorem \ref{T1-4}-(2)}
Without loss of generality, we consider $e_{1}+e_{2}$ direction. Since we are choosing the diagonal direction, we need to make slight modifications to the notation by
\begin{align*}
x^{\lambda}=\left(\lambda-x_{2},\lambda-x_{1},...,x_{N}\right);
\end{align*}
\begin{align*}
T_{\lambda}=\left\{x\in\R^{N}:x_{1}+x_{2}=\lambda\right\};\quad\Sigma_{\lambda}=\left\{x\in\R^{N}:x_{1}+x_{2}<\lambda\right\}.
\end{align*}

\begin{lem}\label{L3-2}
When $s=\frac{1}{2}$, we have
\begin{align*}
G_{s}\left(x-y^{\lambda}\right)=G_{s}\left(x^{\lambda}-y\right),\quad\forall x,y\in\R^{N},x_{1}+y_{1}\neq 2\lambda.
\end{align*}
and
\begin{align*}
G_{s}\left(x-y\right)>G_{s}\left(x^{\lambda}-y\right)>0,\quad\forall x,y\in\Sigma_{\lambda},x\neq y.
\end{align*}
\end{lem}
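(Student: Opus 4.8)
The plan is to obtain both parts directly from the two representations of the potential $G_s$ developed in Section~\ref{S2-5}: the Fourier integral \eqref{E2-28} yields the reflection identity for every $0<s<1$, and the closed one-parameter formula \eqref{E2-32}, available precisely because $s=\tfrac12$, yields the strict monotonicity and positivity. Here $x^{\lambda}=(2\lambda-x_{1},x_{2},\dots,x_{N})$ and $\Sigma_{\lambda}=\{x\in\R^{N}:x_{1}<\lambda\}$, so the vectors $x-y^{\lambda}$ and $x^{\lambda}-y$ differ only by the sign of their first coordinate, which equals $x_{1}+y_{1}-2\lambda$ and $2\lambda-x_{1}-y_{1}$ respectively; the hypothesis $x_{1}+y_{1}\neq 2\lambda$ (and, in the second statement, $x\neq y$, noting that $x,y\in\Sigma_{\lambda}$ already forces $x_{1}+y_{1}<2\lambda$) is exactly what is needed to keep all arguments away from the origin, where $G_s$ is singular.

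For the reflection identity I would reproduce the computation in the proof of Lemma~\ref{L3-1}: starting from \eqref{E2-28} for $G_{s}(x-y^{\lambda})$ and substituting $\zeta=(-\xi_{1},\xi_{2},\dots,\xi_{N})$, one uses that the quasi-norm $\left|\xi\right|_{2s}$ is invariant under a sign change of one coordinate and that $(x-y^{\lambda})\cdot\xi=(x^{\lambda}-y)\cdot\zeta$, which transforms the integral into the representation \eqref{E2-28} of $G_{s}(x^{\lambda}-y)$. Equivalently, this is the symmetry of $G_s$ with respect to the plane $P_{e_{1},0}$ recorded in Section~\ref{S2-5}, applied to the vector $x-y^{\lambda}$; in particular it does not use $s=\tfrac12$.

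For the positivity and the strict inequality I would specialize to $s=\tfrac12$ and use \eqref{E2-32}, i.e. $G_{1/2}(z)=\pi^{-N}\int_{0}^{+\infty}\prod_{i=1}^{N}\frac{r}{r^{2}+z_{i}^{2}}\,dr$, evaluated at $z=x-y$ and at $z=x^{\lambda}-y$. Only the first factor changes: it is $\frac{r}{r^{2}+(x_{1}-y_{1})^{2}}$ for $z=x-y$ and $\frac{r}{r^{2}+(2\lambda-x_{1}-y_{1})^{2}}$ for $z=x^{\lambda}-y$, while the factors with $i\ge2$ agree. The elementary inequality
\[
|x_{1}-y_{1}|=\bigl|(\lambda-y_{1})-(\lambda-x_{1})\bigr|\le\max\{\lambda-x_{1},\lambda-y_{1}\}<(\lambda-x_{1})+(\lambda-y_{1})=2\lambda-x_{1}-y_{1},
\]
valid for $x,y\in\Sigma_{\lambda}$, then gives $0<\frac{r}{r^{2}+(2\lambda-x_{1}-y_{1})^{2}}<\frac{r}{r^{2}+(x_{1}-y_{1})^{2}}$ for every $r>0$; since the common remaining factors are strictly positive for $r>0$, the integrand of $G_{1/2}(x-y)$ strictly dominates, pointwise in $r$, the everywhere positive integrand of $G_{1/2}(x^{\lambda}-y)$. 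Both integrals converge (the integrand is $O(r^{-N})$ as $r\to+\infty$ and, because at least one coordinate difference is nonzero in each case, is $O(r)$ as $r\to 0^{+}$), so integrating the pointwise strict inequality over $(0,+\infty)$ yields $G_{1/2}(x-y)>G_{1/2}(x^{\lambda}-y)>0$.

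I do not expect a genuine obstacle: the whole argument is the one used for Lemma~\ref{L3-1}. The only binding restriction is $s=\tfrac12$, which is forced because the second part relies on the explicit evaluation \eqref{E2-32}, and Section~\ref{S2-5} does not compute the integral \eqref{E2-31} for other values of $s$; the reflection identity itself is valid for all $0<s<1$. The only places where a little care is needed are the chain of elementary inequalities on $\Sigma_{\lambda}$ displayed above (in particular that all inequalities there are strict once $x_{1}\neq\lambda$ and $y_{1}\neq\lambda$) and the verification that the $r$-integrals converge, which is what licenses passing the strict pointwise inequality under the integral sign.
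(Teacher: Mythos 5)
Your proposal proves Lemma \ref{L3-1}, not Lemma \ref{L3-2}. The two lemmas are worded identically, but Lemma \ref{L3-2} sits inside the proof of Theorem \ref{T1-4}-(2), where the notation has been redefined for the diagonal direction: there $x^{\lambda}=\left(\lambda-x_{2},\lambda-x_{1},x_{3},\dots,x_{N}\right)$ and $\Sigma_{\lambda}=\left\{x\in\R^{N}:x_{1}+x_{2}<\lambda\right\}$, i.e.\ the reflection is across the hyperplane $x_{1}+x_{2}=\lambda$, not across $x_{1}=\lambda$. You explicitly take $x^{\lambda}=\left(2\lambda-x_{1},x_{2},\dots,x_{N}\right)$ and $\Sigma_{\lambda}=\left\{x_{1}<\lambda\right\}$, which is the setting of Lemma \ref{L3-1}; with that reading your computation is fine, but it re-establishes a result that is already proved, and the entire content of Lemma \ref{L3-2} is the new, diagonal case. (The hypothesis ``$x_{1}+y_{1}\neq2\lambda$'' in the statement is itself carried over verbatim from Lemma \ref{L3-1} and may have misled you, but the intended reflection identity requires the substitution $\zeta=\left(-\xi_{2},-\xi_{1},\xi_{3},\dots,\xi_{N}\right)$, not $\zeta=\left(-\xi_{1},\xi_{2},\dots,\xi_{N}\right)$ — an easy fix, since $\left|\cdot\right|_{2s}$ is invariant under permutations and sign changes of coordinates.)

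The gap is not merely notational, because your monotonicity step does not survive the change of direction. In the diagonal case \emph{two} factors of the product in \eqref{E2-32} change simultaneously: writing $a=x_{1}-y_{1}$, $b=x_{2}-y_{2}$, $\alpha=\lambda-x_{1}-x_{2}>0$, $\beta=\lambda-y_{1}-y_{2}>0$, one has $\lambda-x_{2}-y_{1}=\alpha+a$, $\lambda-x_{1}-y_{2}=\alpha+b$, and
\begin{align*}
\left(z^{2}+\left(\alpha+a\right)^{2}\right)\left(z^{2}+\left(\alpha+b\right)^{2}\right)-\left(z^{2}+a^{2}\right)\left(z^{2}+b^{2}\right)=\alpha\beta\left(2z^{2}+\alpha\beta+2ab\right),
\end{align*}
which is negative for small $z$ whenever $\alpha\beta+2ab<0$ (e.g.\ $a=-b$ large). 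So the pointwise domination of integrands that carries your argument in the $e_{1}$ case simply fails here, and one cannot conclude by integrating a pointwise strict inequality. Worse, the claimed strict inequality itself breaks down in this regime: for $N=2$, where $G_{1/2}\left(x\right)=\frac{1}{2\pi\left(\left|x_{1}\right|+\left|x_{2}\right|\right)}$, taking $\lambda=0$, $x=\left(0,-1\right)$, $y=\left(-1,0\right)$ gives $x,y\in\Sigma_{0}$, $x\neq y$, yet $\left|x_{1}-y_{1}\right|+\left|x_{2}-y_{2}\right|=2=\left|\lambda-x_{2}-y_{1}\right|+\left|\lambda-x_{1}-y_{2}\right|$, so $G_{1/2}\left(x-y\right)=G_{1/2}\left(x^{\lambda}-y\right)$. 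Any correct treatment of Lemma \ref{L3-2} has to confront this two-factor comparison directly, and your proposal does not engage with it at all.
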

\begin{proof}
For $G_{s}$ defined by \eqref{E2-28}, let $\zeta=\left(-\xi_{2},-\xi_{1},\xi_{3},...,\xi_{N}\right)$, then
\begin{align*}
G_{s}\left(x-y^{\lambda}\right)
&=\frac{1}{\left(2\pi\right)^{N}}\int_{\R^{N}}\left|\xi\right|^{-2s}_{2s}\cos\left(\left(x-y^{\lambda}\right)\cdot\xi\right)d\xi\\
&=\frac{1}{\left(2\pi\right)^{N}}\int_{\R^{N}}\left|\zeta\right|^{-2s}_{2s}\cos\left(\left(x^{\lambda}-y\right)\cdot\zeta\right)d\zeta\\
&=G_{s}\left(x^{\lambda}-y\right).
\end{align*}
When $s=\frac{1}{2}$, by the form \eqref{E2-32}, for any $x,y\in\Sigma_{\lambda}$, we infer that
\begin{align*}
G_{\frac{1}{2}}\left(x-y\right)
&=\pi^{-N}\int^{+\infty}_{0}\left(\prod_{i=1}^{N}\frac{z}{z^{2}+\left|x_{i}-y_{i}\right|^{2}}\right)dz\\
&=\pi^{-N}\int^{+\infty}_{0}\left(\frac{z}{z^{2}+\left|x_{1}-y_{1}\right|^{2}}\cdot\frac{z}{z^{2}+\left|x_{2}-y_{2}\right|^{2}}\cdot\prod_{i=3}^{N}\frac{z}{z^{2}+\left|x_{i}-y_{i}\right|^{2}}\right)dz\\
&>\pi^{-N}\int^{+\infty}_{0}\left(\frac{z}{z^{2}+\left|\lambda-x_{2}-y_{1}\right|^{2}}\cdot\frac{z}{z^{2}+\left|\lambda-x_{1}-y_{2}\right|^{2}}\cdot\prod_{i=3}^{N}\frac{z}{z^{2}+\left|x_{i}-y_{i}\right|^{2}}\right)dz\\
&=G_{\frac{1}{2}}\left(x^{\lambda}-y\right),
\end{align*}
where we apply the formula
\begin{align*}
&\left(z^{2}+\left|\lambda-x_{2}-y_{1}\right|^{2}\right)\left(z^{2}+\left|\lambda-x_{2}-y_{1}\right|^{2}\right)
-\left(z^{2}+\left|x_{1}-y_{1}\right|^{2}\right)\left(z^{2}+\left|x_{2}-y_{2}\right|^{2}\right)\\
&=\left(\lambda-x_{1}-x_{2}\right)\left(\lambda-y_{1}-y_{2}\right)\left(2z^{2}+1\right)>0,\quad\forall\left(x,y,z\right)\in\Sigma_{\lambda}\times\Sigma_{\lambda}\times\R.
\end{align*}
\end{proof}

Utilizing Lemma \ref{L3-2}, we can proceed with the proof of Theorem \ref{T1-4}-(1). It follows that $u$ is symmetric about some $T_{\lambda_{0}}$ and $u$ decreases in $e_{1}+e_{2}$ direction. 

According to Theorem \ref{T1-4}-(1), we know $u$ is symmetric with respect to $P_{e_{i},\widetilde{x}}$ and $u$ decreases in $e_{i}$ direction. We will now assert that $\lambda_{0}=\widetilde{x}_{1}+\widetilde{x}_{2}$. To explore this assertion, we assume that $\lambda_{0}<\widetilde{x}_{1}+\widetilde{x}_{2}$, which implies that $\widetilde{x}^{\lambda_{0}}\in\Sigma_{\lambda_{0}}$. There would be
\begin{align*}
u\left(\widetilde{x}\right)
=u\left(\widetilde{x}^{\lambda_{0}}\right)
&=u\left(\lambda_{0}-\widetilde{x}_{2},\lambda_{0}-\widetilde{x}_{1},\widetilde{x}_{3},...,\widetilde{x}_{N}\right)\\
&<u\left(\widetilde{x}_{1},\lambda_{0}-\widetilde{x}_{1},\widetilde{x}_{3},...,\widetilde{x}_{N}\right)\\
&<u\left(\widetilde{x}_{1},\widetilde{x}_{2},\widetilde{x}_{3},...,\widetilde{x}_{N}\right)
=u\left(\widetilde{x}\right),
\end{align*}
this presents a contradiction. Concurrently, the condition $\lambda_{0} > \widetilde{x}_{1} + \widetilde{x}_{2}$ is also unattainable.

We consider all the directions $e_{i}\pm e_{j}$ to obtain that $u$ is symmetric with respect to $P_{e_{i}\pm e_{j},\widetilde{x}}$ and $u$ decreases in $e_{i}\pm e_{j}$ direction.

\subsubsection{Proof of Theorem \ref{T1-4}-(3)}
We assume $u$ is radially symmetric about $\widetilde{x}$, then $v\left(x\right)=u\left(x+\widetilde{x}\right)$ is also a radially symmetric solution of \eqref{E1-10}. We apply the Fourier transform to \eqref{E1-10}, we have
\begin{align*}
\widehat{v}\left(\left|\xi\right|\right)=\widehat{v^{p}}\left(\left|\xi\right|\right)\widehat{G_{s}}\left(\xi\right)=\widehat{v^{p}}\left(\left|\xi\right|\right)\left|\xi\right|_{2s}^{-2s},
\end{align*}
we deduce that $\left|\xi\right|_{2s}$ is a radial symbol, which leads to a contradiction.

\section{Nonexistence in the half space}\label{S4}

\subsection{Nonexistence of positive supersolutions: proof of Theorem \ref{T1-6}}

For any nonnegative solution $u\in C^{2}\left(\R_{+}^{N}\right)\cap\overline{\mathcal{L}_{s}}$ of \eqref{E1-11}, we can find a nonnegative solution of 
\begin{align}\label{E4-1}
\left\lbrace 
\begin{aligned}
-\mathcal{I}u&\geq u^{p}&&\text{in }\R_{+}^{N},\\
u&=0&&\text{in }\R^{N}\backslash\R_{+}^{N},
\end{aligned}
\right.
\end{align}
which satisfies \eqref{E2-13} and \eqref{E2-14}.
\begin{lem}\label{L4-1}
Assume $u\in C^{2}\left(\R_{+}^{N}\right)\cap\overline{\mathcal{L}_{s}}$ is a nonnegative solution of \eqref{E1-11}, let (Figure \ref{figure3})
\begin{align*}
\widetilde{u}\left(x\right)=
\left\lbrace 
\begin{aligned}
&u\left(x+e_{N}\right),&&x\in\R_{+}^{N},\\
&0,&&x\in\R^{N}\backslash\R_{+}^{N}.
\end{aligned}
\right.
\end{align*}
Then $u\in C^{2}\left(\R_{+}^{N}\right)\cap\overline{\mathcal{L}_{s}}$ that satisfies \eqref{E2-13} is a nonnegative solution of \eqref{E4-1}.

\begin{figure}[htbp]
\centering
\includegraphics[width=0.51\textwidth]{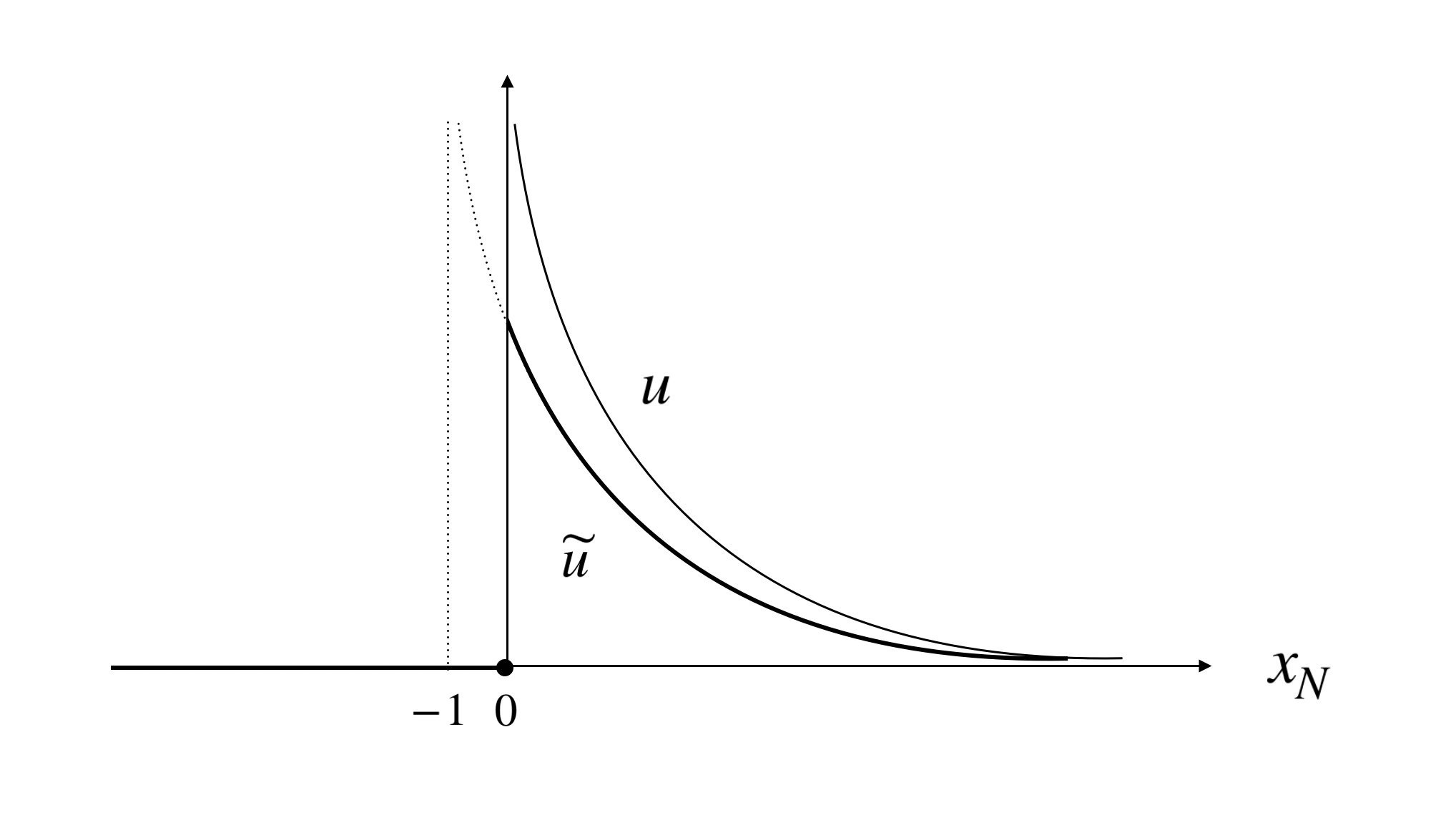}
\caption{The graph of $\widetilde{u}$.}
\label{figure3}
\end{figure}

\end{lem}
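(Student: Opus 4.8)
The plan is to check, one at a time, the five properties $\widetilde{u}$ must have: nonnegativity on $\R^{N}$, vanishing on $\R^{N}\backslash\R_{+}^{N}$, membership in $C^{2}\left(\R_{+}^{N}\right)\cap\overline{\mathcal{L}_{s}}$, validity of \eqref{E2-13}, and the supersolution inequality $-\mathcal{I}\widetilde{u}\geq\widetilde{u}^{p}$ in $\R_{+}^{N}$. The first four are routine. Since $u$ is a nonnegative solution of \eqref{E1-11}, we have $u\geq0$ on all of $\R^{N}$ (the second line of \eqref{E1-11} takes care of $\R^{N}\backslash\R_{+}^{N}$), so $\widetilde{u}\geq0$ on $\R^{N}$ and $\widetilde{u}=0$ on $\R^{N}\backslash\R_{+}^{N}$ straight from the definition. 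For $x\in\R_{+}^{N}$ we have $\widetilde{u}\left(x\right)=u\left(x+e_{N}\right)$ with $x+e_{N}\in\left\{x_{N}>1\right\}$, where $u$ is $C^{2}$; hence $\widetilde{u}\in C^{2}\left(\R_{+}^{N}\right)$, and for any compact $Q\subseteq\overline{\R_{+}^{N}}$ the set $\left(Q\cap\R_{+}^{N}\right)+e_{N}$ is a compact subset of the open half space $\R_{+}^{N}$, so $\left\|\widetilde{u}\right\|_{C^{2}\left(Q\cap\R_{+}^{N}\right)}=\left\|u\right\|_{C^{2}\left(\left(Q\cap\R_{+}^{N}\right)+e_{N}\right)}<+\infty$, which is \eqref{E2-13}. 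Finally $\left|x+e_{N}\right|\to+\infty$ with $\left|x+e_{N}\right|/\left|x\right|\to1$ as $\left|x\right|\to+\infty$, so the growth bound defining $\overline{\mathcal{L}_{s}}$ passes from $u$ to $\widetilde{u}$.

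Next I would establish the supersolution inequality, which is the real content. Fix $x\in\R_{+}^{N}$, write $x=\left(x',x_{N}\right)$ with $x'\in\R^{N-1}$, and set $y=x+e_{N}$, so $y_{N}=x_{N}+1>1$. For $i\in\mathbf{S}_{N-1}$ the segment $\left\{x\pm te_{i}:t\in\R\right\}$ keeps $N$-th coordinate $x_{N}>0$, hence lies in $\R_{+}^{N}$, so $\widetilde{u}\left(x\pm te_{i}\right)=u\left(y\pm te_{i}\right)$ and $\widetilde{u}\left(x\right)=u\left(y\right)$, giving $\mathcal{I}_{i}\widetilde{u}\left(x\right)=\mathcal{I}_{i}u\left(y\right)$. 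For $i=N$ the key observation is that $\widetilde{u}\left(x',\sigma\right)\leq u\left(x',\sigma+1\right)$ for every $\sigma\in\R$: if $\sigma>0$ both sides are equal by definition of $\widetilde{u}$, and if $\sigma\leq0$ the left side is $0$ while the right side is $u$ at a point of $\R^{N}$, hence nonnegative. Using this with $\sigma=x_{N}\pm t$ together with $\widetilde{u}\left(x\right)=u\left(x',x_{N}+1\right)=u\left(y\right)$ gives
\begin{align*}
\widetilde{u}\left(x+te_{N}\right)+\widetilde{u}\left(x-te_{N}\right)-2\widetilde{u}\left(x\right)\leq u\left(y+te_{N}\right)+u\left(y-te_{N}\right)-2u\left(y\right)
\end{align*}
for all $t\in\R$, hence $\mathcal{I}_{N}\widetilde{u}\left(x\right)\leq\mathcal{I}_{N}u\left(y\right)$. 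Summing over $i$ yields $\mathcal{I}\widetilde{u}\left(x\right)\leq\mathcal{I}u\left(y\right)$, and since $y\in\R_{+}^{N}$,
\begin{align*}
-\mathcal{I}\widetilde{u}\left(x\right)\geq-\mathcal{I}u\left(y\right)\geq u\left(y\right)^{p}=\widetilde{u}\left(x\right)^{p},
\end{align*}
which is \eqref{E4-1}.

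The point that needs care --- and that one must dispose of before the termwise comparison above is allowed --- is that $\widetilde{u}$ jumps across $\partial\R_{+}^{N}$, so I would first check that $\mathcal{I}\widetilde{u}\left(x\right)$ is well-defined and finite at every $x\in\R_{+}^{N}$. For $i\in\mathbf{S}_{N-1}$ this is the usual pointwise definition on $C^{2}\cap\mathcal{L}_{s}$, since the line $\left\{x\pm te_{i}\right\}$ stays in $\left\{x_{N}>0\right\}$. For $i=N$: near $t=0$ the integrand is integrable because $\widetilde{u}$ is $C^{2}$ at the interior point $x$ (here \eqref{E2-13} supplies the needed boundedness of $\widetilde{u}$ up to $\partial\R_{+}^{N}$, so the definition is unambiguous); the map $t\mapsto\widetilde{u}\left(x+te_{N}\right)$ has its only discontinuity at $t=-x_{N}$, a bounded jump located at distance $x_{N}>0$ from the origin, hence harmless against the kernel $\left|t\right|^{-1-2s}$, which is bounded there; and the tail $\left|t\right|\to+\infty$ is controlled by $\widetilde{u}\in\overline{\mathcal{L}_{s}}\subset\mathcal{L}_{s}$. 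Once $\mathcal{I}\widetilde{u}\left(x\right)$ and $\mathcal{I}u\left(y\right)$ are both known to be finite, the pointwise inequality may be integrated and the proof is complete. Conceptually, the lemma just records that shifting the extended profile upward by $e_{N}$ converts the one-sided hypothesis ``$u\geq0$ outside $\R_{+}^{N}$'' into the Dirichlet condition ``$\widetilde{u}=0$ outside $\R_{+}^{N}$'' at the sole cost of decreasing the values fed into $\mathcal{I}$, which can only increase $-\mathcal{I}$ and so preserves the supersolution property.
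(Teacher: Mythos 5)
Your proposal is correct and uses essentially the same argument as the paper: the pointwise comparison $\widetilde{u}\left(x\pm te_{i}\right)\leq u\left(x+e_{N}\pm te_{i}\right)$ together with $\widetilde{u}\left(x\right)=u\left(x+e_{N}\right)$, integrated against the kernel and summed over $i$, which gives $-\mathcal{I}\widetilde{u}\left(x\right)\geq-\mathcal{I}u\left(x+e_{N}\right)\geq u^{p}\left(x+e_{N}\right)=\widetilde{u}^{p}\left(x\right)$. Your additional checks (regularity, \eqref{E2-13}, the growth class, and finiteness of $\mathcal{I}\widetilde{u}$ despite the jump across $\partial\R_{+}^{N}$) are routine verifications the paper leaves implicit and do not change the approach.
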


\begin{proof}
For any $\left(x,t\right)\in\R_{+}^{N}\times\R$, we have $\widetilde{u}\left(x\pm te_{i}\right)\leq u\left(x+e_{N}\pm te_{i}\right)$. Hence
\begin{align*}
-\mathcal{I}\widetilde{u}\left(x\right)
&=-\sum^{N}_{i=1}\int_{\R}\frac{\widetilde{u}\left(x+te_{i}\right)+\widetilde{u}\left(x-te_{i}\right)-2\widetilde{u}\left(x\right)}{\left|t\right|^{1+2s}}dt\\
&\geq-\sum^{N}_{i=1}\int_{\R}\frac{u\left(x+e_{N}+te_{i}\right)+u\left(x+e_{N}-te_{i}\right)-2u\left(x+e_{N}\right)}{\left|t\right|^{1+2s}}dt\\
&=-\mathcal{I}u\left(x+e_{N}\right).
\end{align*}
Since $u$ is a solution of \eqref{E1-11}, we have
\begin{align*}
-\mathcal{I}\widetilde{u}\left(x\right)\geq-\mathcal{I}u\left(x+e_{N}\right)\geq u^{p}\left(x+e_{N}\right)=\widetilde{u}^{p}\left(x\right).
\end{align*}
\end{proof}

After constructing a nonnegative solution $\widetilde{u}$ of \eqref{E4-1} in conjunction with \eqref{E2-13}, we can establish a Liouville theorem for the nonnegative solution of \eqref{E4-1}.

\begin{thm}\label{T4-2}
Let $0<s<1$ and $1<p\leq\frac{N+s}{N-s}$. Assume $u\in C^{2}\left(\R_{+}^{N}\right)\cap\overline{\mathcal{L}_{s}}$ is a nonnegative solution of \eqref{E4-1} and $u$ satisfies \eqref{E2-13}, then $u\equiv0$.
\end{thm}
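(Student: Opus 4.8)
The plan is to run a rescaled test-function argument in the spirit of Theorem~\ref{T1-1}, built around the boundary weight $\omega_s(x)=(x_N)_+^s$, which is $\mathcal{I}$-harmonic in $\R_+^N$ by Lemma~\ref{L2-3} (the case $C_s=0$). After the reduction already made in Lemma~\ref{L4-1}, it suffices to treat a nonnegative $u\in C^2(\R_+^N)\cap\overline{\mathcal{L}_s}$ solving \eqref{E4-1} and satisfying \eqref{E2-13}--\eqref{E2-14}, so that in particular Lemma~\ref{L2-11} applies to test functions of the form $(x_N)_+^\alpha\varphi$. Let $\varphi\in C^\infty_0(\R^N)$ be the cutoff of the proof of Theorem~\ref{T1-1} (so $-\mathcal{I}\varphi\le M\varphi$, cf.\ \eqref{E3-1}), put $\varphi_R=\varphi(\cdot/R)$, fix $m>1$ (enlarged later if the cross-term estimate requires it), and take
\begin{align*}
\eta_R:=\omega_s\,\varphi_R^{\,m}.
\end{align*}
Since $\varphi_R^{\,m}\in C^2_0(\R^N)$ and $(2s-1)_+<s<2s$, Lemma~\ref{L2-11} together with $u=0$ outside $\R_+^N$ gives, as in the whole-space case,
\begin{align*}
\int_{\R_+^N}u^p\,\eta_R\,dx\le\int_{\R_+^N}u\,\big(-\mathcal{I}\eta_R\big)\,dx.
\end{align*}

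Next I would expand $-\mathcal{I}\eta_R$ on $\R_+^N$ by the product formula of Lemma~\ref{L2-4}. Using $\mathcal{I}\omega_s=0$ (Lemma~\ref{L2-3}) and $I[\omega_s,\varphi_R^{\,m}]=I_N[\omega_s,\varphi_R^{\,m}]$ (Lemma~\ref{L2-5}), this reads $-\mathcal{I}\eta_R=-\omega_s\,\mathcal{I}(\varphi_R^{\,m})-I_N[\omega_s,\varphi_R^{\,m}]$ in $\R_+^N$. The first term is benign: by Lemma~\ref{L2-2}, Lemma~\ref{L2-1} and \eqref{E3-1},
\begin{align*}
-\omega_s\,\mathcal{I}(\varphi_R^{\,m})\le m\,\omega_s\,\varphi_R^{\,m-1}\big(-\mathcal{I}\varphi_R\big)\le mM\,R^{-2s}\,\eta_R,
\end{align*}
which carries the same $R^{-2s}$ gain as in Theorems~\ref{T1-1}--\ref{T1-2}. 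The new ingredient is $I_N[\omega_s,\varphi_R^{\,m}]$; the substitution $t=Rr$, $x=Ry$ (as in Lemma~\ref{L2-5}) yields the scaling
\begin{align*}
I_N[\omega_s,\varphi_R^{\,m}](x)=R^{-s}\,I_N[\omega_s,\varphi^{\,m}](x/R),
\end{align*}
where $I_N[\omega_s,\varphi^{\,m}]$ is bounded (its defining integral is dominated by an $L^1$ kernel, cf.\ Lemma~\ref{L2-5}), vanishes outside the slab $\{|x'|<2\}$, and decays in the $x_N$-direction.

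The main difficulty is to absorb the mixed term with a remainder of the correct order: a priori it only scales like $R^{-s}$, which is larger than the $R^{-2s}$ main term, and the naive Young splitting $u\,|I_N|\le\tfrac12 u^p\eta_R+C\,\eta_R^{-1/(p-1)}|I_N|^{p/(p-1)}$ introduces the weight $\eta_R^{-1/(p-1)}\sim x_N^{-s/(p-1)}$, which is not integrable near $\{x_N=0\}$ when $p$ is close to $1$. The resolution, following Birindelli--Du--Galise~\cite{BDG}, is a \emph{moving-balls} argument: in the region where $I_N[\omega_s,\varphi_R^{\,m}]$ is not small (essentially $R\le|x|\le 2R$ and its $e_N$-shifts) one decomposes into dyadic layers $\{x_N\sim\rho\}$, covers each by balls of radius $\sim\rho$, and on each ball controls $u$ by an $\omega_s$-type barrier through the maximum principles of Lemmas~\ref{L2-6}--\ref{L2-7} (it is precisely the restriction of the diffusion to the coordinate axes that forces the covering, exactly as the conical diffusion does in~\cite{BDG}); summing the local estimates should give, for $m$ large enough,
\begin{align*}
\int_{\R_+^N}u\,\big|I_N[\omega_s,\varphi_R^{\,m}]\big|\,dx\le\tfrac12\int_{\R_+^N}u^p\,\eta_R\,dx+C\,R^{\,N+s-\frac{2sp}{p-1}}.
\end{align*}
I expect this estimate — the integral control of $I_N[\omega_s,\varphi_R^{\,m}]$ near the flat boundary and in the nonlocal tails — to be the heaviest part of the argument.

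Granting it, the rest copies Theorem~\ref{T1-1}. Hölder's inequality and $\int_{B_{2R}\cap\R_+^N}\omega_s\,dx\le CR^{N-1}\!\int_0^{2R}\!x_N^{\,s}\,dx_N\le CR^{\,N+s}$ give
\begin{align*}
\int_{\R_+^N}u\,\eta_R\,dx\le\Big(\int_{\R_+^N}u^p\,\eta_R\,dx\Big)^{1/p}\big(CR^{\,N+s}\big)^{(p-1)/p},
\end{align*}
and combining the displays above with Young's inequality yields $\int_{\R_+^N}u^p\eta_R\,dx\le C(N,s,p,m,M)\,R^{\,N+s-\frac{2sp}{p-1}}$. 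Since $N+s-\frac{2sp}{p-1}<0$ exactly for $p<\frac{N+s}{N-s}$, letting $R\to+\infty$ forces $u^p\omega_s\equiv0$, hence $u\equiv0$ in $\R_+^N$ (as $\omega_s>0$ there) and then everywhere by \eqref{E2-14}. In the critical case $p=\frac{N+s}{N-s}$ the exponent is $0$, so $\int_{\R_+^N}u^p\omega_s\,dx<+\infty$; one then refines exactly as in the critical case of Theorem~\ref{T1-1}, splitting $B_{2R}$ into $B_{\sqrt R}$ and $B_{2R}\setminus B_{\sqrt R}$ and using $\int_{\{|x|\ge\sqrt R\}}u^p\omega_s\,dx\to0$, to reach $u\equiv0$. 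Undoing the reduction of Lemma~\ref{L4-1} completes the proof of Theorem~\ref{T4-2}.
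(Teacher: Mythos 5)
Your setup (testing against $\omega_s\varphi_R^m$, integrating by parts via Lemma \ref{L2-11}, expanding with Lemmas \ref{L2-3}--\ref{L2-5}, and the scaling $I_N[\omega_s,\varphi_R^m](x)=R^{-s}I_N[\omega_s,\varphi^m](x/R)$) is correct, and you have correctly located the real obstruction: the mixed term is only of order $R^{-s}$ and cannot be absorbed by Young's inequality near $\{x_N=0\}$ when $p$ is close to $1$, because the weight $x_N^{-s/(p-1)}$ fails to be integrable. But the step that resolves this obstruction — the claimed estimate
\begin{align*}
\int_{\R_{+}^{N}}u\left|I_{N}\left[\omega_{s},\varphi_{R}^{m}\right]\right|dx\leq\frac{1}{2}\int_{\R_{+}^{N}}u^{p}\eta_{R}\,dx+CR^{\,N+s-\frac{2sp}{p-1}}
\end{align*}
— is not proved; it is only announced, with an appeal to a ``moving-balls'' covering whose mechanism is not explained. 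As stated it is not even clear how it could work: the maximum principles of Lemmas \ref{L2-6}--\ref{L2-7} give \emph{lower} bounds for supersolutions (and Lemma \ref{L2-7} is a narrow-domain principle for antisymmetric functions, not relevant here), whereas the estimate needs an \emph{upper} control of $\int u\,|I_N|$; moreover $-I_N[\omega_s,\varphi_R^m]$ is bounded below by a positive constant times $R^{-s}$ on a boundary layer of measure comparable to $R^N$ where $\eta_R\sim x_N^s$ is small, which is exactly the regime your Young splitting cannot handle, and the remainder constant would have to be independent of $u$'s unknown decay constant to survive the critical case. So the heart of the proof of Theorem \ref{T4-2} is missing.

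For comparison, the paper does not estimate the mixed term at all: it exploits the fact that $-\mathcal{I}u\geq u^p\geq 0$ to add, for free, a second test function $\phi_{\alpha_0,R}=(x_N)_+^{\alpha_0}\varphi_R$ with $\alpha_0\in\left(s,\min\{1,2s\}\right)$, for which Lemma \ref{L2-3} produces the \emph{positive} singular term $C_{\alpha_0}\varphi\,x_N^{\alpha_0-2s}$ that dominates the bounded mixed terms $I_N[\omega_{\alpha_0},\varphi]+I_N[\omega_s,\varphi]$ near the flat boundary; and it takes the cutoff supported in the lifted ball $B_R\left(\tfrac{Re_N}{2}\right)$ (this is what ``moving up the ball'' means there), so that at corner points of $\partial\R_+^N\cap\partial B_1\left(\tfrac{e_N}{2}\right)$ the mixed term has a strictly positive limit. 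This yields the pointwise inequality \eqref{E4-2}, $-\mathcal{I}\phi_{\alpha_0}-\mathcal{I}\phi_s\leq M\phi_s$ in $\R_+^N$, after which the scaling, H\"older and critical-case refinement are verbatim as in Theorem \ref{T1-1}, with no power $m$ and no absorption argument needed. Unless you can actually carry out your boundary-layer estimate, you should replace that step by an argument of this pointwise type.
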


\begin{proof}
In contrast to the selection of the standard test function utilized in the proof of Theorem \ref{T1-1}, we make a slight adjustment by elevating the test function in the direction of $e_{N}$. Let $\varphi\in C^{\infty}_{0}\left(\R^{N}\right)$ be  $0<\varphi\leq1$ in $B_{1}\left(\frac{e_{N}}{2}\right)$ and
\begin{align*}
\varphi\left(x\right)=
\left\lbrace 
\begin{aligned}
&1,&&x\in B_{\frac{3}{4}}\left(\frac{e_{N}}{2}\right),\\
&0,&&x\notin B_{1}\left(\frac{e_{N}}{2}\right).
\end{aligned}
\right.
\end{align*}
For any $s\leq\alpha<\min\left\{1,2s\right\}$, we define
\begin{align*}
\phi_{\alpha}\left(x\right)=\left(x_{N}\right)_{+}^{\alpha}\varphi\left(x\right).
\end{align*}

\textbf{Step 1.} For given $\alpha_{0}\in\left(s,\min\left\{1,2s\right\}\right)$, we show that there exists $M>0$ such that
\begin{align}\label{E4-2}
-\mathcal{I}\phi_{\alpha_{0}}\left(x\right)-\mathcal{I}\phi_{s}\left(x\right)\leq M\phi_{s}\left(x\right),\quad\forall x\in\R_{+}^{N}.
\end{align}
For any $x\in\R_{+}^{N}\cap\left\{\left|x-\frac{e_{N}}{2}\right|\geq1\right\}$, we have
\begin{align*}
-\mathcal{I}\phi_{\alpha_{0}}\left(x\right)-\mathcal{I}\phi_{s}\left(x\right)
&=-\sum_{i=1}^{N}\int_{\R}\frac{\phi_{\alpha_{0}}\left(x+te_{i}\right)+\phi_{\alpha_{0}}\left(x-te_{i}\right)}{\left|t\right|^{1+2s}}dt\\
&\quad\quad-\sum_{i=1}^{N}\int_{\R}\frac{\phi_{s}\left(x+te_{i}\right)+\phi_{s}\left(x-te_{i}\right)}{\left|t\right|^{1+2s}}dt\\
&\leq0=M\phi_{s}\left(x\right),
\end{align*}
therefore, in order to assert \eqref{E4-2}, it is necessary to establish that 
\begin{align}\label{E4-3}
\inf_{x\in\R_{+}^{N}\cap B_{1}\left(\frac{e_{N}}{2}\right)}\frac{\mathcal{I}\phi_{\alpha_{0}}\left(x\right)+\mathcal{I}\phi_{s}\left(x\right)}{\phi_{s}\left(x\right)}>-\infty.
\end{align}
We assume \eqref{E4-3} is not true, then there exists a convergent sequence $\left\{x_{n}\right\}\subset\left(\R_{+}^{N}\cap B_{1}\left(\frac{e_{N}}{2}\right)\right)$ such that
\begin{align}\label{E4-4}
\lim_{n\rightarrow+\infty}\frac{\mathcal{I}\phi_{\alpha_{0}}\left(x_{n}\right)+\mathcal{I}\phi_{s}\left(x_{n}\right)}{\phi_{s}\left(x_{n}\right)}=-\infty.
\end{align}
Let $x_{n}\rightarrow x_{\infty}\in\overline{\R_{+}^{N}\cap B_{1}\left(\frac{e_{N}}{2}\right)}$ as $n\rightarrow+\infty$. We have the fact that
\begin{align}\label{E4-5}
\left\lbrace 
\begin{aligned}
&\phi_{s}\left(x_{n}\right)>0,&&\forall n\in\N_{+},\\
&\phi_{s}\left(x_{\infty}\right)=0,&&\forall x_{\infty}\in\overline{\R_{+}^{N}}\cap\partial B_{1}\left(\frac{e_{N}}{2}\right).
\end{aligned}
\right.
\end{align}
We will consider the case where $x_{\infty}$ is positioned differently, which consistently leads to a contradiction with \eqref{E4-4}.

Case 1: $x_{\infty}\in\R_{+}^{N}\cap B_{1}\left(\frac{e_{N}}{2}\right)$. We obtain 
\begin{align*}
\lim_{n\rightarrow+\infty}\frac{\mathcal{I}\phi_{\alpha_{0}}\left(x_{n}\right)+\mathcal{I}\phi_{s}\left(x_{n}\right)}{\phi_{s}\left(x_{n}\right)}=\frac{\mathcal{I}\phi_{\alpha_{0}}\left(x_{\infty}\right)+\mathcal{I}\phi_{s}\left(x_{\infty}\right)}{\phi_{s}\left(x_{\infty}\right)},
\end{align*}
which contradicts \eqref{E4-4}.

Case 2: $x_{\infty}\in\R_{+}^{N}\cap \partial B_{1}\left(\frac{e_{N}}{2}\right)$. We see that
\begin{align*}
\mathcal{I}\phi_{\alpha_{0}}\left(x_{\infty}\right)+\mathcal{I}\phi_{s}\left(x_{\infty}\right)
&=\sum_{i=1}^{N}\int_{\R}\frac{\phi_{\alpha_{0}}\left(x_{\infty}+te_{i}\right)+\phi_{\alpha_{0}}\left(x_{\infty}-te_{i}\right)}{\left|t\right|^{1+2s}}dt\\
&\quad+\sum_{i=1}^{N}\int_{\R}\frac{\phi_{s}\left(x_{\infty}+te_{i}\right)+\phi_{s}\left(x_{\infty}-te_{i}\right)}{\left|t\right|^{1+2s}}dt\geq0.
\end{align*}
We assume $\mathcal{I}\phi_{\alpha_{0}}\left(x_{\infty}\right)+\mathcal{I}\phi_{s}\left(x_{\infty}\right)=0$, then we have
\begin{align*}
\phi_{\alpha_{0}}\left(x_{\infty}\pm te_{i}\right)=\phi_{s}\left(x_{\infty}\pm te_{i}\right)=0,\quad\forall i\in\mathbf{S}_{N},\forall t\in\R,
\end{align*}
which is equivalent to
\begin{align*}
\left\lbrace 
\begin{aligned}
&\varphi\left(x_{\infty}\pm te_{i}\right)=0,&&\forall i\in\mathbf{S}_{N-1},\forall t\in\R,\\
&\left(\left(x_{\infty}\right)_{N}\pm t\right)_{+}\cdot\varphi\left(x_{\infty}\pm te_{N}\right)=0,&&\forall t\in\R.
\end{aligned}
\right.
\end{align*}
Now that $x_{\infty}\in\R_{+}^{N}\cap\partial B_{1}\left(\frac{e_{N}}{2}\right)$, the above condition cannot exist, thus we would obtain that
\begin{align*}
\lim_{n\rightarrow+\infty}\left[\mathcal{I}\phi_{\alpha_{0}}\left(x_{n}\right)+\mathcal{I}\phi_{s}\left(x_{n}\right)\right]
=\mathcal{I}\phi_{\alpha_{0}}\left(x_{\infty}\right)+\mathcal{I}\phi_{s}\left(x_{\infty}\right)>0.
\end{align*}
Therefore, we get by \eqref{E4-5} that
\begin{align*}
\lim_{n\rightarrow+\infty}\frac{\mathcal{I}\phi_{\alpha_{0}}\left(x_{n}\right)+\mathcal{I}\phi_{s}\left(x_{n}\right)}{\phi_{s}\left(x_{n}\right)}=+\infty.
\end{align*}

Case 3: $x_{\infty}\in\partial\R_{+}^{N}\cap B_{1}\left(\frac{e_{N}}{2}\right)$. By Lemma \ref{L2-3}, Lemma \ref{L2-4} and Lemma \ref{L2-5}, we have for any $x\in \R^{N}_{+}$,
\begin{align}\label{E4-6}
\begin{split}
\mathcal{I}\phi_{\alpha_{0}}\left(x\right)+\mathcal{I}\phi_{s}\left(x\right)
=&C_{\alpha_{0}}\varphi\left(x\right)x^{\alpha_{0}-2s}_{N}+x_{N}^{\alpha_{0}}\mathcal{I}\varphi\left(x\right)+x_{N}^{s}\mathcal{I}\varphi\left(x\right)\\
&+I_{N}\left[\omega_{\alpha_{0}},\varphi\right]\left(x\right)+I_{N}\left[\omega_{s},\varphi\right]\left(x\right),
\end{split}
\end{align}
where $C_{\alpha_{0}}>0$ and 
\begin{align*}
&I_{N}\left[\omega_{\alpha_{0}},\varphi\right]\left(x\right)+I_{N}\left[\omega_{s},\varphi\right]\left(x\right)\\
&=\int_{\R}\frac{\left[\left(x_{N}+t\right)_{+}^{\alpha_{0}}+\left(x_{N}+t\right)_{+}^{s}-x_{N}^{\alpha_{0}}-x_{N}^{s}\right]\left[\varphi\left(x+te_{N}\right)-\varphi\left(x\right)\right]}{\left|t\right|^{1+2s}}\\
&\quad+\frac{\left[\left(x_{N}-t\right)_{+}^{\alpha_{0}}+\left(x_{N}-t\right)_{+}^{s}-x_{N}^{\alpha_{0}}-x_{N}^{s}\right]\left[\varphi\left(x-te_{N}\right)-\varphi\left(x\right)\right]}{\left|t\right|^{1+2s}}dt.
\end{align*}
It is straightforward to acquire that
\begin{align*}
\lim_{n\rightarrow+\infty}C_{\alpha_{0}}\varphi\left(x_{n}\right)\cdot\left(x_{n}\right)_{N}^{\alpha_{0}-2s}=+\infty
\end{align*}
and
\begin{align*}
\lim_{n\rightarrow+\infty}\left[\left(x_{n}\right)_{N}^{\alpha_{0}}\mathcal{I}\varphi\left(x_{n}\right)+\left(x_{n}\right)_{N}^{s}\mathcal{I}\varphi\left(x_{n}\right)\right]=0.
\end{align*}
By Lemma \ref{L2-5}, we obtain
\begin{align*}
\lim_{n\rightarrow+\infty}\left[I_{N}\left[\omega_{\alpha_{0}},\varphi\right]\left(x_{n}\right)+I_{N}\left[\omega_{s},\varphi\right]\left(x_{n}\right)\right]=I_{N}\left[\omega_{\alpha_{0}},\varphi\right]\left(x_{\infty}\right)+I_{N}\left[\omega_{s},\varphi\right]\left(x_{\infty}\right),
\end{align*}
hence \eqref{E4-6} yields
\begin{align*}
\lim_{n\rightarrow+\infty}\left[\mathcal{I}\phi_{\alpha_{0}}\left(x_{n}\right)+\mathcal{I}\phi_{s}\left(x_{n}\right)\right]=+\infty.
\end{align*}
We reach by \eqref{E4-5} that
\begin{align*}
\lim_{n\rightarrow+\infty}\frac{\mathcal{I}\phi_{\alpha_{0}}\left(x_{n}\right)+\mathcal{I}\phi_{s}\left(x_{n}\right)}{\phi_{s}\left(x_{n}\right)}=+\infty.
\end{align*}

Case 4: $x_{\infty}\in\partial\R_{+}^{N}\cap\partial B_{1}\left(\frac{e_{N}}{2}\right)$. Note that by \eqref{E4-5} and \eqref{E4-6}, we have
\begin{align}\label{E4-7}
\begin{split}
\mathcal{I}\phi_{\alpha_{0}}\left(x_{n}\right)+\mathcal{I}\phi_{s}\left(x_{n}\right)
\geq\left(x_{n}\right)_{N}^{\alpha_{0}}\mathcal{I}\varphi\left(x_{n}\right)+\left(x_{n}\right)_{N}^{s}\mathcal{I}\varphi\left(x_{n}\right)
+I_{N}\left[\omega_{\alpha_{0}},\varphi\right]\left(x_{n}\right)+I_{N}\left[\omega_{s},\varphi\right]\left(x_{n}\right).
\end{split}
\end{align}
It is obvious that
\begin{align}\label{E4-8}
\lim_{n\rightarrow+\infty}\left[\left(x_{n}\right)_{N}^{\alpha_{0}}\mathcal{I}\varphi\left(x_{n}\right)+\left(x_{n}\right)_{N}^{s}\mathcal{I}\varphi\left(x_{n}\right)\right]=0.
\end{align}
We use Lemma \ref{L2-5} to know (See Figure \ref{figure4})
\begin{align}\label{E4-9}
\begin{split}
&\lim_{n\rightarrow+\infty}\left[I_{N}\left[\omega_{\alpha_{0}},\varphi\right]\left(x_{n}\right)+I_{N}\left[\omega_{s},\varphi\right]\left(x_{n}\right)\right]\\
&=\int_{\R}\frac{\left(t_{+}^{\alpha_{0}}+t_{+}^{s}\right)\varphi\left(x_{\infty}+te_{N}\right)+\left(\left(-t\right)_{+}^{\alpha_{0}}+\left(-t\right)_{+}^{s}\right)\varphi\left(x_{\infty}-te_{N}\right)}{\left|t\right|^{1+2s}}dt>0.
\end{split}
\end{align}
By \eqref{E4-5}, \eqref{E4-7}, \eqref{E4-8} and \eqref{E4-9}, we conclude that
\begin{align*}
\lim_{n\rightarrow+\infty}\frac{\mathcal{I}\phi_{\alpha_{0}}\left(x_{n}\right)+\mathcal{I}\phi_{s}\left(x_{n}\right)}{\phi_{s}\left(x_{n}\right)}=+\infty.
\end{align*}

\begin{figure}[htbp]
\centering
\includegraphics[width=0.55\textwidth]{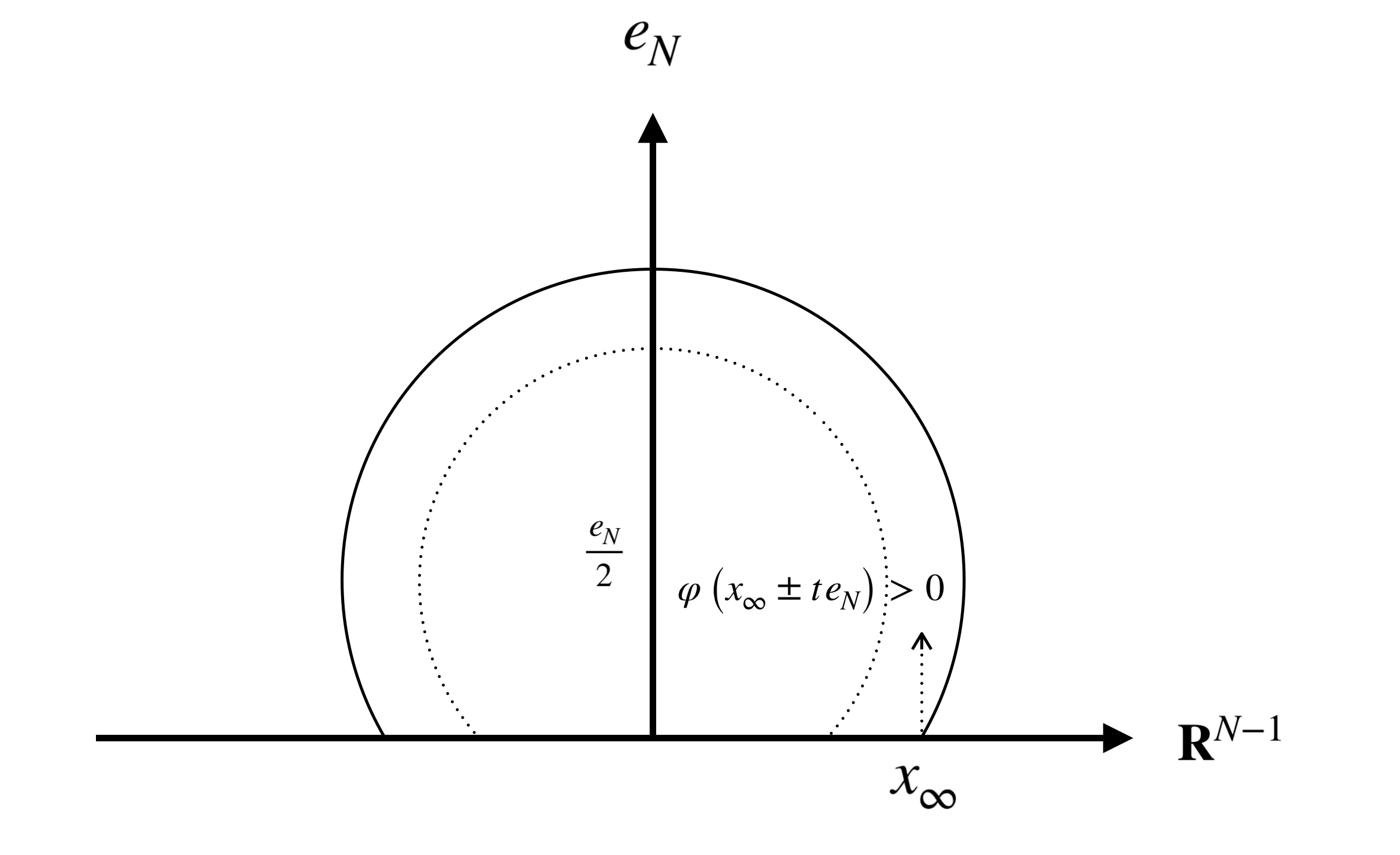}
\caption{$\varphi\left(x_{\infty}\pm te_{N}\right)$ does not always be $0$ after moving up the ball.}
\label{figure4}
\end{figure}

In conclusion, for any $x_{\infty}\in\overline{\R_{+}^{N}\cap B_{1}\left(\frac{e_{N}}{2}\right)}$, a contradiction to \eqref{E4-4} is consistently observed, hence then inequality \eqref{E4-2} is proved to be true.

\textbf{Step 2.} For any $R>0$, we perform the following rescaling
\begin{align*}
\varphi_{R}\left(x\right)=\varphi\left(\frac{x}{R}\right)=
\left\lbrace 
\begin{aligned}
&1,&&x\in B_{\frac{3R}{4}}\left(\frac{Re_{N}}{2}\right),\\
&0,&&x\notin B_{R}\left(\frac{Re_{N}}{2}\right).
\end{aligned}
\right.
\end{align*}
For any $s\leq\alpha<\min\left\{1,2s\right\}$, we define
\begin{align*}
\phi_{\alpha,R}\left(x\right)=\left(x_{N}\right)_{+}^{\alpha}\varphi_{R}\left(x\right).
\end{align*}
Since $u\in C^{2}\left(\R_{+}^{N}\right)\cap\overline{\mathcal{L}_{s}}$ is a nonnegative solution of \eqref{E4-1} and $u$ satisfies \eqref{E2-13}, by Lemma \ref{L2-1} and Lemma \ref{L2-11}, we have
\begin{align*}
\int_{\R_{+}^{N}}u^{p}\phi_{s,R}dx
&\leq R^{s}\int_{\R^{N}}\left(\frac{x_{N}}{R}\right)_{+}^{s}\varphi_{R}\left(-\mathcal{I}u\right)dx\\
&\leq R^{s}\int_{\R^{N}}\left(\frac{x_{N}}{R}\right)_{+}^{s}\varphi_{R}\left(-\mathcal{I}u\right)dx+R^{s}\int_{\R^{N}}\left(\frac{x_{N}}{R}\right)_{+}^{\alpha_{0}}\varphi_{R}\left(-\mathcal{I}u\right)dx\\
&=R^{-s}\int_{\R^{N}}u\left[-\mathcal{I}\phi_{s}\left(\frac{x}{R}\right)\right]dx+R^{-s}\int_{\R^{N}}u\left[-\mathcal{I}\phi_{\alpha_{0}}\left(\frac{x}{R}\right)\right]dx\\
&=R^{-s}\int_{\R_{+}^{N}}u\left[-\mathcal{I}\phi_{\alpha_{0}}\left(\frac{x}{R}\right)-\mathcal{I}\phi_{s}\left(\frac{x}{R}\right)\right]dx.
\end{align*}
We deduce by \eqref{E4-2} that
\begin{align*}
\int_{\R_{+}^{N}}u\left[-\mathcal{I}\phi_{\alpha_{0}}\left(\frac{x}{R}\right)-\mathcal{I}\phi_{s}\left(\frac{x}{R}\right)\right]dx
\leq M\int_{\R_{+}^{N}}u\phi_{s}\left(\frac{x}{R}\right)dx
=MR^{-s}\int_{\R_{+}^{N}}u\phi_{s,R}dx.
\end{align*}
We can reach that
\begin{align}\label{E4-10}
\int_{\R_{+}^{N}}u^{p}\phi_{s,R}dx\leq MR^{-2s}\int_{\R_{+}^{N}}u\phi_{s,R}dx.
\end{align}
Since that for any $x\in\R_{+}^{N}$, there holds
\begin{align}\label{E4-11}
0\leq\phi_{s,R}\left(x\right)=\left(x_{N}\right)_{+}^{s}\varphi_{R}\left(x\right)\leq\left(x_{N}\right)_{+}^{s},
\end{align}
hence by the H\"{o}lder inequality,
\begin{align*}
\int_{\R_{+}^{N}}u\phi_{s,R}dx
&=\int_{\R_{+}^{N}\cap B_{2R}}u\phi_{s,R}dx\\
&\leq\left(\int_{\R_{+}^{N}\cap B_{2R}}u^{p}\phi_{s,R}dx\right)^{\frac{1}{p}}\left(\int_{\R_{+}^{N}\cap B_{2R}}\phi_{s,R}dx\right)^{\frac{p-1}{p}}\\
&\leq C\left(N,p\right)R^{\frac{\left(N+s\right)\left(p-1\right)}{p}}\left(\int_{\R_{+}^{N}}u^{p}\phi_{s,R}dx\right)^{\frac{1}{p}}.
\end{align*}
We infer by \eqref{E4-10} that for any $p>1$, 
\begin{align}\label{E4-12}
\int_{\R_{+}^{N}}u^{p}\phi_{s,R}dx\leq C\left(N,M,p\right)R^{N+s-\frac{2sp}{p-1}}.
\end{align}

\textbf{Step 3.} When $R\rightarrow+\infty$, there occurs 
\begin{align*}
\left\lbrace 
\begin{aligned}
\varphi_{R}&\rightarrow\varphi_{\infty}\equiv1&&\text{in }\R^{N}_{+},\\
\phi_{s,R}&\rightarrow\phi_{s,\infty}=x_{N}^{s}&&\text{in }\R^{N}_{+}.
\end{aligned}
\right.
\end{align*}
When $1<p<\frac{N+s}{N-s}$, we let $R\rightarrow+\infty$ in \eqref{E4-12}, we get $u\equiv0$. When $p=\frac{N+s}{N-s}$, as $R\rightarrow+\infty$, we have
\begin{align}\label{E4-13}
\int_{\R_{+}^{N}}x^{s}_{N}u^{p}dx<+\infty.
\end{align}
We rewrite
\begin{align*}
\int_{\R_{+}^{N}}u\phi_{s,R}dx
=\int_{\R_{+}^{N}\cap\left\{\left|x\right|\leq\sqrt{R}\right\}}u\phi_{s,R}dx+\int_{\R_{+}^{N}\cap\left\{\sqrt{R}\leq\left|x\right|\leq 2R\right\}}u\phi_{s,R}dx.
\end{align*}
By \eqref{E4-11}, the H\"{o}lder inequality implies that
\begin{align*}
\int_{\R_{+}^{N}\cap\left\{\left|x\right|\leq\sqrt{R}\right\}}u\phi_{s,R}dx
&\leq\left(\int_{\R_{+}^{N}\cap\left\{\left|x\right|\leq\sqrt{R}\right\}}x_{N}^{s}u^{p}dx\right)^{\frac{1}{p}}\left(\int_{\R_{+}^{N}\cap\left\{\left|x\right|\leq\sqrt{R}\right\}}x_{N}^{s}dx\right)^{\frac{p-1}{p}}\\
&\leq C\left(N,p\right)R^{\frac{\left(N+s\right)\left(p-1\right)}{2p}}\left(\int_{\R_{+}^{N}}x^{s}_{N}u^{p}dx\right)^{\frac{1}{p}}\\
&=C\left(N,p\right)R^{s}\left(\int_{\R_{+}^{N}}x^{s}_{N}u^{p}dx\right)^{\frac{1}{p}}
\end{align*}
and
\begin{align*}
\int_{\R_{+}^{N}\cap\left\{\sqrt{R}\leq\left|x\right|\leq 2R\right\}}u\phi_{s,R}dx
&\leq\left(\int_{\R_{+}^{N}\cap\left\{\sqrt{R}\leq\left|x\right|\leq 2R\right\}}x_{N}^{s}u^{p}dx\right)^{\frac{1}{p}}\left(\int_{\R_{+}^{N}\cap\left\{\sqrt{R}\leq\left|x\right|\leq 2R\right\}}x_{N}^{s}dx\right)^{\frac{p-1}{p}}\\
&\leq C\left(N,p\right)R^{\frac{\left(N+s\right)\left(p-1\right)}{p}}\left(\int_{\R_{+}^{N}\cap\left\{\sqrt{R}\leq\left|x\right|\leq 2R\right\}}x^{s}_{N}u^{p}dx\right)^{\frac{1}{p}}\\
&=C\left(N,p\right)R^{2s}\left(\int_{\R_{+}^{N}\cap\left\{\sqrt{R}\leq\left|x\right|\leq 2R\right\}}x^{s}_{N}u^{p}dx\right)^{\frac{1}{p}}.
\end{align*}
By \eqref{E4-10}, we conclude that
\begin{align*}
\int_{\R_{+}^{N}}u^{p}\phi_{s,R}dx\leq C\left(N,M,p\right)\left[R^{-s}\left(\int_{\R_{+}^{N}}x^{s}_{N}u^{p}dx\right)^{\frac{1}{p}}+\left(\int_{\R_{+}^{N}\cap\left\{\sqrt{R}\leq\left|x\right|\leq 2R\right\}}x^{s}_{N}u^{p}dx\right)^{\frac{1}{p}}\right].
\end{align*}
Therefore, in virtue of \eqref{E4-13}, we reach $u\equiv0$ by letting $R\rightarrow+\infty$.
\end{proof}

\subsubsection{Proof of Theorem \ref{T1-6}}
Let $u\in C^{2}\left(\R_{+}^{N}\right)\cap\overline{\mathcal{L}_{s}}$ be any nonnegative solution of \eqref{E1-11}. According to Lemma \ref{L4-1}, the function $\widetilde{u}\in C^{2}\left(\R_{+}^{N}\right)\cap\overline{\mathcal{L}_{s}}$, as defined in Lemma \ref{L4-1}, is a nonnegative solution of \eqref{E4-1} and  $\widetilde{u}$ satisfies \eqref{E2-13}. Theorem \ref{T4-2} implies that $\widetilde{u}\equiv0$ in $\R^N$, hence the continuity of $u$ in $\R^N$ shows that
\begin{align*}
u
\left\lbrace 
\begin{aligned}
&\geq0,&&x_{N}<1,\\
&=0,&&x_{N}\geq1.
\end{aligned}
\right.
\end{align*}
For any $\overline{x}$ with $\overline{x}_{N}=1$, we have
\begin{align*}
0=u^{p}\left(\overline{x}\right)\leq-\mathcal{I}u\left(\overline{x}\right)=-\sum_{i=1}^{N}\int_{\R}\frac{u\left(\overline{x}+te_{i}\right)+u\left(\overline{x}-te_{i}\right)}{\left|t\right|^{1+2s}}dt\leq0,
\end{align*}
that is $u\left(\overline{x}\pm te_{i}\right)=0$ holds a.e. for any $i\in\mathbf{S}_{N}$ and any $t\in\R$ (i.e. $u=0$ a.e. in $A\left(\overline{x}\right)$). Since $u\in C^{2}\left(\R_{+}^{N}\right)$, the arbitrariness of $\overline{x}$ leads to the conclusion that $u\equiv0$ in $\R^{N}_{+}$ and $u=0$ a.e. in $\R^{N}\backslash\R_{+}^{N}$.

\subsection{Nonexistence of positive solutions: proof of Theorem \ref{T1-7}}
We choose the direction $e_{N}$, all the notation in Section \ref{S3-2} should be transferred to the version of $e_{N}$ direction. Let $\lambda>0$ and we define
\begin{align*}
\Sigma'_{\lambda}=\left\{x\in\R_{+}^{N}:0<x_{N}<\lambda\right\};\quad\Sigma'^{-}_{\lambda}=\left\{x\in\Sigma'_{\lambda}:w_{\lambda}<0\right\}.
\end{align*}
Since $u\in C^{2}\left(\R_{+}^{N}\right)\cap\mathcal{L}_{s}$ is a nonnegative solution of \eqref{E1-12} and $u$ satisfies \eqref{E1-9}, then Lemma \ref{L2-6} with $\Omega=\R_{+}^{N}$ indicates that either $u>0$ or $u\equiv0$ in $\R_{+}^{N}$. Therefore, we only need to exclude the case $u>0$ in $\R_{+}^{N}$. 

We assume $u>0$ in $\R_{+}^{N}$. We can verify that $w_{\lambda}\in C^{2}\left(\Sigma'^{-}_{\lambda}\right)\cap\mathcal{L}_{s}$ satisfies 
\begin{align}\label{E4-14}
\left\lbrace 
\begin{aligned}
-\mathcal{I}w_{\lambda}-pu^{p-1}w_{\lambda}&\geq0&&\text{in }\Sigma'^{-}_{\lambda},\\
w_{\lambda}&\geq0&&\text{in }\Sigma_{\lambda}\backslash\Sigma'^{-}_{\lambda}.
\end{aligned}
\right.
\end{align}
We apply Lemma \ref{L2-7} with $D=\Sigma'^{-}_{\lambda}$ and \eqref{E1-9} to \eqref{E4-14}, we obtain that there exists small $\lambda_{0}>0$ such that for any $\lambda\in\left(0,\lambda_{0}\right)$, we have $w_{\lambda}\geq0$ in $\Sigma'^{-}_{\lambda}$. Hence $\Sigma'^{-}_{\lambda}=\emptyset$, which implies
\begin{align*}
w_{\lambda}\geq0\quad\text{in }\Sigma'_{\lambda}.
\end{align*}

We now move the plane $T_{\lambda}$ as $\lambda$ increases from $0$ to $+\infty$ and we define
\begin{align*}
\lambda_{1}:=\sup\left\{\lambda>0:w_{\eta}\geq0\text{ in }\Sigma'_{\eta},\eta\leq\lambda\right\}.
\end{align*}

If $\lambda_{1}=+\infty$, then $u$ increases in $e_{N}$ direction, this situation contradicts the conditions $u>0$ in $\R_{+}^{N}$ and $u$ satisfies \eqref{E1-9}. If $0<\lambda_{1}<+\infty$, we can employ a similar argument that presented in the proof of Theorem \ref{T1-3}-(1) to show that $w_{\lambda_{1}}\equiv0$ in $\Sigma'_{\lambda_{1}}$. For any $x\in\R_{+}^{N}$ with $x_{N}>2\lambda_{1}$, we have $x^{\lambda_{1}}\in\R^{N}\backslash\R_{+}^{N}$ and
\begin{align*}
u\left(x\right)=u\left(x^{\lambda_{1}}\right)=0.
\end{align*}
This represents yet another contradiction. Therefore, there only holds $u\equiv0$ in $\R_{+}^{N}$.

\section{Symmetry in the unit ball}\label{S5}
\subsection{Symmetry of positive solutions: proof of Theorem \ref{T1-8}}
Without loss of generality, we begin by selecting $e_{1}$ direction. We will still use the notation in Section \ref{S3-2} and proceed to define a new domain 
\begin{align*}
\Sigma''_{\lambda}=\left\{x\in B_{1}:-1<x_{1}<\lambda,\lambda\in\left(-1,1\right)\right\}.
\end{align*}
Notice that $u\in C^{2}\left(B_{1}\right)\cap\mathcal{L}_{s}$ is a solution of \eqref{E1-13} and $f$ is a Lipschitz continuous function, we can ascertain that
\begin{align*}
-\mathcal{I}w_{\lambda}=\frac{f\left(u_{\lambda}\right)-f\left(u\right)}{u_{\lambda}-u}w_{\lambda}\geq C\left(f\right)w_{\lambda}\quad\text{in }B_{1}.
\end{align*}
Hence $w_{\lambda}\in C^{2}\left(\Sigma''_{\lambda}\right)\cap\mathcal{L}_{s}$ satisfies  
\begin{align}\label{E5-1}
\left\lbrace 
\begin{aligned}
-\mathcal{I}w_{\lambda}-C\left(f\right)w_{\lambda}&\geq0&&\text{in }\Sigma''_{\lambda},\\
w_{\lambda}&\geq0&&\text{in }\Sigma_{\lambda}\backslash\Sigma''_{\lambda}.
\end{aligned}
\right.
\end{align}

Let $D=\Sigma''_{\lambda}$ and we apply Lemma \ref{L2-7} to \eqref{E5-1}, we obtain that there exists $\lambda_{0}\in\left(-1,0\right)$ such that for any $\lambda\in\left(-1,\lambda_{0}\right)$, we have $w_{\lambda}\geq0$ in $\Sigma''_{\lambda}$. We move the plane $T_{\lambda}$ as $\lambda$ increases from $-1$ to $0$ and we define
\begin{align*}
\lambda_{1}:=\sup\left\{\lambda\leq0:w_{\eta}\geq0\text{ in }\Sigma''_{\eta},\eta\leq\lambda\right\}.
\end{align*}

We assert that $\lambda_{1}=0$. To establish this, we proceed by contradiction, assuming that $-1<\lambda_{1}<0$. Under this assumption, it follows that $w_{\lambda_{1}}\geq0$ in $\Sigma''_{\lambda_{1}}$. Consequently, we will obtain that $w_{\lambda_{1}}>0$ in $\Sigma''_{\lambda_{1}}$. Alternatively, if there exists $z\in\Sigma''_{\lambda_{1}}$ such that $w_{\lambda_{1}}\left(z\right)=0$. Since that 
\begin{align*}
w_{\lambda_{1}}>0\quad\text{in }\left(\Sigma_{\lambda_{1}}\backslash\Sigma''_{\lambda_{1}}\right)\cap B_{1}\left(2\lambda_{1}e_{1}\right),
\end{align*}
in a manner analogous to the calculations in case 1 of \textbf{Step 3} within the proof of Theorem \ref{T1-3}-(1), when $i=1$,
\begin{align*}
\mathcal{I}_{1}w_{\lambda_{1}}\left(z\right)
&=\text{P.V.}\int^{\lambda_{1}}_{-\infty}\left(\frac{1}{\left|t-z_{1}\right|^{1+2s}}-\frac{1}{\left|2\lambda_{1}-t-z_{1}\right|^{1+2s}}\right)w_{\lambda_{1}}\left(z_{t}\right)dt\\
&\geq \text{P.V.}\int^{-1}_{2\lambda_{1}-1}\left(\frac{1}{\left|t-z_{1}\right|^{1+2s}}-\frac{1}{\left|2\lambda_{1}-t-z_{1}\right|^{1+2s}}\right)w_{\lambda_{1}}\left(z_{t}\right)dt>0.
\end{align*}
When $i=2,...,N$, there is
\begin{align*}
\mathcal{I}_{i}w_{\lambda_{1}}\left(z\right)
=\text{P.V.}\int_{\R}\frac{w_{\lambda_{1}}\left(z+te_{i}\right)}{\left|t\right|^{1+2s}}dt>0.
\end{align*}
Therefore,
\begin{align*}
0=f\left(u_{\lambda}\right)\left(z\right)-f\left(u\right)\left(z\right)=-\mathcal{I}w_{\lambda_{1}}\left(z\right)<0,
\end{align*}
this cannot be true. 

For any given $\sigma>0$, in the bounded domain $\Sigma''_{\lambda_{1}-\sigma}$, we have
\begin{align*}
w_{\lambda_{1}}\geq C\left(\lambda_{1},\sigma\right)\quad\text{in }\overline{\Sigma''_{\lambda_{1}-\sigma}}.
\end{align*}
There exists $\epsilon_{\sigma}\in\left(0,-\lambda_{1}\right)$ such that for any $\lambda\in\left[\lambda_{1},\lambda_{1}+\epsilon_{\sigma}\right)$,             
\begin{align}\label{E5-2}
w_{\lambda}\geq0\quad\text{in }\Sigma''_{\lambda_{1}-\sigma}.
\end{align}
Let $D=\Sigma''_{\lambda}\backslash\Sigma''_{\lambda_{1}-\sigma}$. We find that $w_{\lambda}\in C^{2}\left(D\right)\cap\mathcal{L}_{s}$ satisfies
\begin{align*}
\left\lbrace 
\begin{aligned}
-\mathcal{I}w_{\lambda}-C\left(f\right)w_{\lambda}&\geq0&&\text{in }D,\\
w_{\lambda}&\geq0&&\text{in }\Sigma_{\lambda}\backslash D.
\end{aligned}
\right.
\end{align*}
By Lemma \ref{L2-7}, there exists $\sigma_{0}>0$ such that for any $\sigma\in\left(0,\sigma_{0}\right)$, we have $w_{\lambda}\geq0$ in $D$. We can choose $\sigma\in\left(0,\sigma_{0}\right)$ to get
\begin{align}\label{E5-3}
w_{\lambda}\geq0\quad\text{in }\Sigma''_{\lambda}\backslash\Sigma''_{\lambda_{1}-\sigma}.
\end{align}
From \eqref{E5-2} and \eqref{E5-3}, we have for any $\lambda\in\left[\lambda_{1},\lambda_{1}+\epsilon_{\sigma}\right)$,
\begin{align*}
w_{\lambda}\geq0\quad\text{in }\Sigma''_{\lambda},
\end{align*}
this contradicts the definition of $\lambda_{1}$. Thus we have $\lambda_{1}=0$. 

We can also move the plane $T_{\lambda}$ as $\lambda$ decreases from $1$ to $0$ and we define
\begin{align*}
\lambda_{2}:=\inf\left\{\lambda\geq0:w_{\eta}\leq0\text{ in }\Sigma''_{\eta},\eta\geq\lambda\right\}.
\end{align*}
Similarly, we get $\lambda_{2}=0$. We have $w_{\lambda_{1}}\geq0$ in $\Sigma''_{\lambda_{1}}$ and $w_{\lambda_{2}}\leq0$ in $\Sigma''_{\lambda_{2}}$, hence $w_{\lambda_{1}}=w_{\lambda_{2}}\equiv0$. Therefore $u$ is symmetric about $T_{0}$ and $u$ decreases in $e_{1}$ direction. We choose all the coordinate axis directions $e_{i}$ to obtain that $u$ is symmetric with respect to $P_{e_{i},0}$ and $u$ decreases in $e_{i}$ direction.

\vspace*{1em}
\noindent\textbf{Acknowledgements:} We would like to thank Professors Isabeau Birindelli and Giulio Galise at Sapienza Università di Roma for their kind guidance and suggestions. Lele Du thanks the financial support received from Sapienza Università di Roma through the projects BANDO AR n.9/2022 and BANDO AR n.1/2024. Minbo Yang is partially supported by National Natural Science Foundation of China (12471114) and Natural Science Foundation of Zhejiang Province (LZ22A010001).

\newpage
\vspace*{0.1em}


\begin{thebibliography}{99}
\bibitem{A}
T. Aubin.
\emph{Probl\`{e}mes isop\'{e}rim\'{e}triques et espaces de Sobolev},
J. Differential Geometry, \textbf{11} (1976), 573--598.

\bibitem{AS} 
S. Armstrong and B. Sirakov.
\emph{Nonexistence of positive supersolutions of elliptic equations via the maximum principle}, 
Comm. Partial Differential Equation, \textbf{36} (2011), 2011--2047.

\bibitem{BC3} 
M. Bardi and A. Cesaroni. 
\emph{Liouville properties and critical value of fully nonlinear elliptic operators}, 
J. Differential Equations, \textbf{261} (2016), 3775--3799.

\bibitem{BC1}
R. F. Bass and Z.-Q. Chen.
\emph{Systems of equations driven by stable processes},
Probab. Theory Related Fields, \textbf{134} (2006), 175--214.

\bibitem{BC2}
R. F. Bass and Z.-Q. Chen.
\emph{regularity of harmonic functions for a class of singular stable-like processes},
Math. Z., \textbf{266} (2010), 489--503.

\bibitem{BCN} 
H. Berestycki, I. Capuzzo Dolcetta and L. Nirenberg.
\emph{Superlinear indefinite elliptic problems and nonlinear Liouville theorems}, 
Topol. Methods Nonlinear Anal., \textbf{4} (1994), 59--78.
 
\bibitem{BCC} 
I. Birindelli, I. Capuzzo Dolcetta and A. Cutr\`\i. 
\emph{Liouville theorems for semilinear equations on the Heisenberg group}, 
Ann. Inst. H. Poincar\'{e} C Anal. Non Lin\'{e}aire, \textbf{14} (1997), 295--308.
  
\bibitem{BDG}
I. Birindelli, L. Du and G. Galise.
\emph{Liouville results for semilinear integral equations with conical diffusion},
arXiv:2406.12720, 2024.

\bibitem{BGL} 
I. Birindelli, G. Galise and F. Leoni. 
\emph{Liouville theorems for a family of very degenerate elliptic nonlinear operators},
Nonlinear Anal., \textbf{161} (2017), 198--211.

\bibitem{BM} 
I. Birindelli and E. Mitidieri. 
\emph{Liouville theorems for elliptic inequalities and applications}, 
Proc. Roy. Soc. Edinburgh Sect. A, \textbf{128} (1998), 1217--1247.

\bibitem{BV} 
C. Bucur and E. Valdinoci. 
\emph{Nonlocal diffusion and applications}, 
Lect. Notes Unione Mat. Ital., 20, Unione Matematica Italiana, Bologna, 2016, xii+155 pp.

\bibitem{CGS}
L. Caffarelli, B. Gidas and J. Spruck.
\emph{Asymptotic symmetry and local behavior of semilinear elliptic equations with critical Sobolev growth},
Comm. Pure Appl. Math., \textbf{42} (1989), 271--297.

\bibitem{CS}
L. Caffarelli and L. Silvestre.
\emph{An extension problem related to the fractional Laplacian},
Comm. Partial Differential Equations, \textbf{32} (2007), 1245--1260.

 \bibitem{CFY}
W. Chen, Y. Fang and R. Yang.
\emph{Liouville theorems involving the fractional Laplacian on a half space},
Adv. Math., \textbf{274} (2015), 167--198.

\bibitem{CL}
W. Chen and C. Li.
\emph{Classification of solutions of some nonlinear elliptic equations},
Duke Math. J., \textbf{63} (1991), 615--622.

\bibitem{CLL}
W. Chen, C. Li and Y. Li.
\emph{A direct method of moving planes for the fractional Laplacian},
Adv. Math., \textbf{308} (2017), 404--437.

\bibitem{CLO}
W. Chen, C. Li and B. Ou.
\emph{Classification of solutions for an integral equation},
Comm. Pure Appl. Math., \textbf{59} (2006), 330--343.

\bibitem{CL1} 
A. Cutr\`\i~and F. Leoni, 
\emph{On the Liouville property for fully nonlinear equations}, 
Ann. Inst. H. Poincar\'{e} C Anal. Non Lin\'{e}aire, \text{17} (2000), 219--245.

\bibitem{DDW} 
J. D\'avila, L. Doupaigne and J. Wei.
\emph{On the fractional Lane-Emden equation}, 
Trans. Amer. Math. Soc., \textbf{369} (2017), 6087--6104.

\bibitem{F}
A. Farina.
\emph{On the classification of solutions of the Lane-Emden equation on unbounded domains of $\R^{N}$},
J. Math. Pures Appl. (9), \textbf{87} (2007), 537--561.

\bibitem{FQ} 
P. Felmer and A. Quaas.
\emph{Fundamental solutions and Liouville type theorems for nonlinear integral operators}, 
Adv. Math., \textbf{226} (2011), 2712--2738.

\bibitem{FR} 
X. Fern\'{a}ndez-Real and X. Ros-Oton.
\emph{Schauder and Cordes-Nirenberg estimates for nonlocal elliptic equations with singular kernels}, 
Proc. Lond. Math. Soc., \textbf{129} (2024), 47 pp.

\bibitem{G}
B. Gidas.
\emph{Symmetry properties and isolated singularities of positive solutions of nonlinear elliptic equations},
Nonlinear partial differential equations in engineering and applied science, Lect. Notes Pure Appl. Math., 54, New York, 1980, pp. 255--273.

\bibitem{GMS} 
J. Garc\'ia-Meli\'an, A. Quaas and B. Sirakov. 
\emph{Liouville theorems for nonlinear elliptic equations in half-spaces}, 
J. Anal. Math., \textbf{139} (2019), 559--583.

\bibitem{GNN1}
B. Gidas, W. Ni and L. Nirenberg.
\emph{Symmetry and related properties via the maximum principle},
Comm. Math. Phys., \textbf{68} (1979), 209--243.

\bibitem{GNN2}
B. Gidas, W. Ni and L. Nirenberg.
\emph{Symmetry of positive solutions of nonlinear elliptic equations in $\R^{N}$},
Mathematical analysis and applications, part A, (1981), 369--402.

\bibitem{GNW}
C. Gui, W. Ni and X. Wang.
\emph{On the stability and instability of positive steady states of a semilinear heat equation in $\R^{N}$},
Comm. Pure Appl. Math., \textbf{45} (1992), 1153--1181.

\bibitem{GS1}
B. Gidas and J. Spruck.
\emph{Global and local behavior of positive solutions of nonlinear elliptic equations},
Comm. Pure Appl. Math., \textbf{34} (1981), 525--598.

\bibitem{GS2} 
B. Gidas and J. Spruck. 
\emph{A priori bounds of positive solutions of nonlinear elliptic equations}, 
Comm. Partial Differential Equations, \textbf{6} (1981), 801--807.

\bibitem{L} 
F. Leoni. 
\emph{Explicit subsolutions and a Liouville theorem for fully nonlinear uniformly elliptic inequalities in halfspaces}, J. Math. Pures Appl., \textbf{98} (2012), 574--590.

\bibitem{KS} 
M. Kassmann and R. W. Schwab. 
\emph{Regularity results for nonlocal parabolic equations}, 
Riv. Math. Univ. Parma, \textbf{5} (2014), 183--212.

\bibitem{NPV} 
E. Di Nezza, G. Palatucci and E. Valdinoci. 
\emph{Hitchhiker's guide to the fractional Sobolev spaces}, 
Bull. Sci. Math., \textbf{136} (2012), 521--573.

\bibitem{NPQ} 
G. Nornberg, D. dos Prazeres and A. Quaas. 
\emph{Fundamental solutions and critical Lane-Emden exponents for nonlinear integral operators in cones}, 
J. Funct. Anal., \textbf{287} (2024), 1--32.

\bibitem{RS1} 
X. Ros-Oton and J. Serra.
\emph{Regularity theory for general stable operators}, 
J. Differential Equations, \textbf{260} (2016), 8675--8715.

\bibitem{RS2}  
X. Ros-Oton and J. Serra.
\emph{Boundary regularity for fully nonlinear integro-differential equations}, 
Duke Math. J., \textbf{165} (2016), 2079--2154.

\bibitem{T}
G. Talenti.
\emph{Best constant in Sobolev inequality},
Ann. Mat. Pura. Appl., \textbf{110} (1976), 353--372.

\bibitem{W}
X. Wang.
\emph{On the Cauchy problem for reaction-diffusion equations},
Trans. Amer. Math. Soc., \textbf{337} (1993), 549--590.


\end{thebibliography}
\end{document}